\documentclass[11pt, a4paper]{article}
\usepackage{amsmath, amssymb}
\usepackage[margin=1in]{geometry}
\usepackage{amsthm}
\usepackage{comment}
\usepackage{color}
\usepackage{mathtools}
\usepackage{fancyhdr}
\usepackage[numbers]{natbib}
\usepackage{hyperref}

\newtheorem{theorem}{Theorem}
\newtheorem{prop}{Proposition}
\newtheorem{lemma}{Lemma}
\newtheorem{corollary}{Corollary}
\newtheorem{assumption}{Assumption}

\definecolor{editcol}{rgb}{0.0,0.5,0.0}

\definecolor{edittwocol}{rgb}{0.65,0.0,0.65}

\title{\textbf{The Generalized Excursion Coupling as the Limit of Concave Optimal Transport\thanks{A complete Lean~4 formalization of both main convergence
theorems is available at \protect \url{https://github.com/ykanoria/concave-OT-limit-lean} (tag \texttt{arxiv-v1}).}} 
}
\author{Yash Kanoria\\ Columbia University}

\begin{document}
\maketitle

\begin{abstract}
The Monge--Kantorovich problem with the Euclidean distance cost is degenerate, typically admitting infinitely many optimal plans. A unique optimal plan is selected by perturbing the distance to a strictly convex or increasing strictly concave cost and passing to the limit: the convex side gives the plan induced by the map which is monotone on each transport ray, while the concave side was understood only on the real line, where Juillet proved that power-cost optimizers converge to the so-called excursion coupling, which is induced by a map when the source is atomless. This paper extends the concave selection to $\mathbb{R}^n$ for mutually singular finite positive Borel measures $\mu$ and $\nu$ with equal total mass and finite first moments, assuming that $\mu\ll \mathcal L^n$: for a broad class of increasing strictly concave perturbations of the distance with a well-defined first-order profile,
the corresponding optimal maps converge in $\mu$-measure to the same intrinsic limit $t_\#$, independent of the perturbation family and its profile. The map $t_\#$ is obtained by disintegrating the transport along its maximal rays and applying the one-dimensional excursion coupling on each ray. The same limit is established for the power costs $\|x-y\|^{1-\varepsilon}$ under a finite $\|x\|\ln(1+\|x\|)$ moment condition, establishing a conjecture of Juillet.
\end{abstract}

\noindent\textbf{AMS subject classifications.}
49Q22, 49J45.

\noindent\textbf{Keywords.}
Optimal transport, concave cost, Monge map, excursion coupling, $\Gamma$-convergence.

\section{Introduction}
\label{sec:intro}

Let $\mu, \nu \in \mathcal{P}_1(\mathbb{R}^n)$ be probability measures with finite first moments.
 A map $t:\mathbb{R}^n\to\mathbb{R}^n$ is a \emph{transport map} from $\mu$ to $\nu$ if it pushes $\mu$ forward to $\nu$, i.e., $\mu(t^{-1}(B))=\nu(B)$ for every measurable $B\subseteq\mathbb{R}^n$. The Monge problem \cite{monge1781memoire} with Euclidean cost seeks a transport \emph{map} $t$ from $\mu$ to $\nu$ that minimizes the total transport cost
\begin{equation*}
 \quad \min \left\{ \int_{\mathbb{R}^n} \|x-t(x)\| \, d\mu : t \textup{ is a transport map from $\mu$ to $\nu$}\right\}\, ,
\end{equation*}
where $\|\cdot\|$ is the Euclidean norm.
This problem is non-linear in $t$, and hence may not have a solution, prompting the need for a relaxation. Let $\Pi(\mu,\nu)$ denote the set of couplings of $\mu$ and $\nu$, i.e., measures $\gamma \in \mathcal{P}(\mathbb{R}^n \times \mathbb{R}^n)$ with marginals $\mu$ and $\nu$: $\gamma(A\times\mathbb{R}^n)=\mu(A)$ and $\gamma(\mathbb{R}^n\times B)=\nu(B)$ for all measurable $A,B\subseteq\mathbb{R}^n$. The Monge--Kantorovich problem with the Euclidean cost is
\begin{equation*}
    (L^1) \quad \min \left\{ \int_{\mathbb{R}^n \times \mathbb{R}^n} \|x-y\| \, d\gamma : \gamma \in \Pi(\mu,\nu) \right\}\, .
\end{equation*}
In contrast to problems with costs which are strictly convex or strictly concave in the distance, the distance-cost problem is degenerate: its set $\Pi_1(\mu,\nu)$ of optimal plans is typically infinite. %
The transport set is partitioned into maximal oriented transport rays, along which an optimal plan can be disintegrated; see \cite[Theorems~6.1 \& 6.2]{ambrosio2003existence} and \cite[Propositions~18.5 \& 18.6]{maggi2023optimal}.
Singling out one optimal plan calls for a principled tie-breaking rule. We will be particularly interested in optimal plans induced by a map, which solve the Monge problem.

Canonical rules perturb the Euclidean cost to a strictly convex or increasing strictly concave function of the distance and let the perturbation vanish; the two directions select fundamentally different 
$L^1$-optimal plans. For each fixed perturbation, classical uniqueness results apply under suitable source regularity. If $\mu\ll\mathcal L^n$, a strictly convex cost such as $\|x-y\|^{1+\varepsilon}$
has a unique optimal plan induced by a map \cite[Theorem~1.2]{gangbo1996geometry}. For an increasing strictly concave function of the distance, all common mass is matched on the diagonal. If the residual
source is absolutely continuous with respect to $\mathcal L^n$---or, more generally, assigns zero mass to every $(n-1)$-rectifiable set---then the mutually singular residuals are coupled by a unique map
as shown by Pegon, Piazzoli, and Santambrogio \cite{pegon2013concave-general}, who refined the result of Gangbo and McCann \cite{gangbo1996geometry}.

Ambrosio and Pratelli \cite{ambrosio2003existence} (see also \cite{caffarelli2002constructing,trudinger2001monge}) showed that, for $\mu\ll\mathcal{L}^n$, the strictly convex costs $\|x-y\|^{1+\varepsilon}$
select, as $\varepsilon\to 0^+$, an optimal $L^1$ map that is \emph{monotone} along each transport ray. (Note that some source regularity, such as absolute continuity $\mu\ll\mathcal{L}^n$, is essential for the optimal plan to be induced by
a map: if $\mu$ is a Dirac mass and $\nu$ is not, no transport map exists at all \cite[p.~124]{ambrosio2003existence}.) In the opposite direction, Juillet proved on $\mathbb R$ that the optimal plans
for $|x-y|^q$ converge weakly, as $q\to 1^-$, to the excursion coupling \cite[Corollary~0.2]{juillet2020solution}. For every $q$ in the open interval $]0,1[$, this coupling is also the unique minimizer of $\int |x-y|
^q\,d\gamma$ among the $L^1$-optimal couplings; equivalently, it is characterized by concentration on a full-mass set which satisfies specific properties  \cite[Main Theorem and Definition~0.3]{juillet2020solution}. 
Its diagonal part matches the common mass, while its off-diagonal part couples the mutually singular residuals \cite[Lemma~2.3]
{juillet2020solution}. If the residual source is atomless, the off-diagonal coupling is induced by a map
\cite{juillet2020solution}. Thus fixed concave-cost optimizers were structurally understood in $\mathbb R^n$, but the canonical concave selection as one approaches the limit of (linear) Euclidean distance had been
identified only on the line.

Our main results carry the concave selection to $\mathbb{R}^n$. 
We treat the admissible increasing strictly concave families specified in Assumption~\ref{ass:concave-perturbation-family}, whose first-order profile $\phi(d)\coloneqq \lim_{\varepsilon\to 0^+}(\phi_\varepsilon(d)-d)/\varepsilon$ exists, is finite-valued, strictly concave, and satisfies $\phi(d)\ge -C(1+d)$ for some constant $C \in [0, \infty[$. 
One example is $c_\varepsilon(x,y)=\|x-y\|+\varepsilon\sqrt{\|x-y\|}$, with $\phi(d)=\sqrt{d}$. 
Since the common mass $\mu\wedge\nu$ is matched in place under any increasing strictly concave cost, we find it convenient to state our main result for mutually singular measures, with a corollary for arbitrary probability measures.
For mutually singular finite positive Borel measures $\mu$ and $\nu$ with equal positive total mass and finite first moments, assuming $\mu\ll\mathcal{L}^n$, the optimal plan is induced by a map $t_\varepsilon$; we show that these maps converge in $\mu$-measure to a single intrinsic limit $t_\#$ which is independent of the perturbation family and its first-order profile. We call $t_\#$ the \emph{generalized excursion-coupling (EC) map} (Theorem~\ref{thm:main-theorem}). Juillet's power costs $\|x-y\|^{1-\varepsilon}$ have profile $\phi(d)=-d\ln d$, which is not bounded below by $-C(1+d)$, so the aforementioned result does not apply. We extend our argument to establish the same intrinsic limit $t_\#$ for power costs under the stronger assumption of finite $\|x\|\ln(1+\|x\|)$ moments (Theorem~\ref{thm:main-theorem-power-costs}). The map $t_\#$ is built by disintegrating the transport into its maximal transport rays and applying Juillet's one-dimensional excursion coupling along each ray (Theorem~\ref{thm:6.1'}). This proves the conjecture stated by Juillet about the $\mathbb{R}^n$ limit \cite{juillet2020solution}.

Our method follows the $\Gamma$-convergence strategy of Ambrosio and Pratelli \cite{ambrosio2003existence}: any limit of the perturbed optima minimizes a \emph{secondary} variational problem---it minimizes $\int\phi(\|x-y\|)\,d\gamma$ over the $L^1$-optimal plans, for the first-order profile $\phi$ above. We show that concavity reverses the comparisons that drive the convex theory at each step of the argument, so the secondary problem selects the excursion coupling rather than the monotone map on each ray. We disintegrate an $L^1$-optimal plan along its maximal transport rays and, starting from Juillet's power-profile variational characterization \cite[Main Theorem]{juillet2020solution}, establish the corresponding raywise comparison for every admissible strictly concave family, not only the power family for which it was originally stated. The resulting excursion-coupling replacement depends only on the conditional marginals, not on the concave family, and the one-dimensional maps glue into the single measurable map $t_\#$.

Carrying this out in $\mathbb{R}^n$ requires disintegration into maximal transport rays and measurable selection over the ray space,  similar to that outlined in \cite[Section~6]{ambrosio2003existence} and \cite[Chapter~18]{maggi2023optimal}. This machinery could not simply be cited: checking the interface lemmas we rely on against the printed statements in both sources surfaced a few statement-level gaps (footnotes~\ref{fn:maximal-ray-gap}, \ref{fn:truncation-metric-gap}, \ref{fn:motionless-set-gap}), as well as gaps in the proofs (footnotes~\ref{fn:borel-ray-map-gap}, \ref{fn:measurable-gluing-gap}). The present paper proves the disintegration and selection steps independently (see Sections~\ref{sec:proof-of-main-theorem} and \ref{sec:proof-of-6.1'}), with every convention declared explicitly, and completes the argument.

We note that a different tie-breaking rule, \emph{entropic} regularization, has recently been studied in the same limiting spirit: Di Marino and Louet \cite{dimarino2017entropic} analyze the small-noise limit of entropically regularized distance-cost transport on the line, and Ley \cite{ley2025entropic} characterizes the limiting coupling under mutual singularity of the marginals. In general, these selections do not agree with the concave-limit map studied here.

{\bf Organization.} Section~\ref{sec:formulation} fixes the standing assumptions and states the main theorem for mutually singular measures, together with its general-measure corollary (Theorem~\ref{thm:main-theorem}). Section~\ref{sec:proof-of-main-theorem} reduces the concave limit to the secondary variational problem and identifies its unique solution. Section~\ref{sec:proof-of-6.1'} recalls the one-dimensional excursion coupling and proves the construction theorem for the generalized EC map $t_\#$. Section~\ref{sec:power-costs} extends the results to the power costs $\|x-y\|^{1-\varepsilon}$. We follow Juillet's interval convention: for $a<b$, the notations $]a,b[$, $[a,b[$, and $]a,b]$ denote the open, left-closed/right-open, and left-open/right-closed intervals, respectively.

\section{Problem Formulation and Main Result}
\label{sec:formulation}

We use the notation of Section~\ref{sec:intro}: the problem\footnote{Our results generalize to norms satisfying regularity and uniform convexity (condition (23) in \cite{ambrosio2003existence}).} $L^1$, the couplings $\Pi(\mu,\nu)$, and the Monge problem. For finite positive measures $\alpha,\beta$ with equal mass, $\Pi(\alpha,\beta)$ denotes the finite positive measures on $\mathbb{R}^n\times\mathbb{R}^n$ with marginals $\alpha$ and $\beta$, and we will use $\Pi_1(\alpha,\beta)$ to denote the minimizers of the Euclidean distance cost over this finite coupling class.

We study the limit of problems with increasing strictly concave costs $c_\epsilon(x,y) = \phi_\epsilon(\|x-y\|)$ for $\epsilon \in ]0,1[$, which approach the linear cost $\|x-y\|$ as $\epsilon \to 0^+$.

As shown by Gangbo and McCann \cite{gangbo1996geometry} and refined by Pegon et al \cite{pegon2013concave-general}, for any increasing strictly
concave cost function $c(x,y)=l(\|x-y\|)$ and equal mass measures $\mu$ and $\nu$ with $(\mu - \nu)_+ \ll \mathcal{L}^n$, the optimal plan decomposes
into two parts. The common mass $\mu \wedge \nu$ remains stationary on the diagonal, and the transport
problem reduces to optimally coupling the remaining, mutually singular parts $\mu_0 = (\mu - \nu)_+$ and $\nu_0 = (\nu - \mu)_+$. For
the latter measures, assuming that $\mu_0\ll\mathcal{L}^n$, the optimal plan under such concave costs is induced by
a deterministic transport map. This motivates us to state and prove our main theorem for the case of mutually singular measures with absolutely continuous source measure, with an easy corollary for arbitrary probability measures.

Juillet proved that one-dimensional optimizers for the power costs $|x-y|^q$, $q<1$, converge as $q\to 1^-$ to the \emph{excursion coupling} \cite[Corollary~0.2]{juillet2020solution}. His arch characterization asserts that this plan is concentrated on a full-mass relation $S\subset\mathbb{R}^2$ on which the following three properties hold for every two pairs $(x,y),(x',y')\in S$; it does not require these properties at every point of the topological support \cite[Main Theorem and Definition~0.3]{juillet2020solution}.
Let $[a,b]$ denote $[\min(a,b),\max(a,b)]$, and let $]a,b[$ denote its interior.
The properties are:
\begin{enumerate}
    \item Arches do not cross: either $[x,y]\cap[x',y'] = \emptyset$ or $[x,y]\cap[x',y'] = \{z\}$ for some $z \in \mathbb{R}$, or one of $[x,y]$ and $[x',y']$ is contained in the other.
    \item Arches do not connect: if $\min (|y-x|,|y'-x'|)>0$ then $ y \neq x'$.
    \item Nested arches have the same orientation: if $[x',y'] \subset ]x,y[$ we have $(y-x)(y'-x')\geq 0$.
\end{enumerate}
For mutually singular marginals, Juillet's defining relation is single-valued away from the atoms of the source \cite[Proposition~3.6]{juillet2020solution}. Consequently, if the source is atomless, disintegration over the first marginal and the standard measurable-kernel argument show that the excursion coupling is induced by a Borel \emph{excursion-coupling map}.

The extension to $\mathbb{R}^n$ uses the disintegration of an $L^1$-optimal plan, after restriction to a suitable plan-full contact set, into one-dimensional conditional plans on maximal oriented transport rays \cite[Theorems~6.2 and~9.1--9.4]{ambrosio2003existence}. Our canonical map between the mutually singular source and target measures, the {generalized excursion-coupling (EC) map} $t_\#$, applies the one-dimensional excursion-coupling map (Theorem~\ref{thm:EC-map-gives-a-plan-1D}) along almost every maximal transport ray; the conditional marginals are stochastically ordered and the conditional source is atomless. Theorem~\ref{thm:6.1'} gives the Borel raywise construction, and Theorem~\ref{thm:7.2'} proves that its induced plan is the common unique minimizer for every admissible strictly concave secondary profile.

\begin{assumption}[Concave perturbation family]
    \label{ass:concave-perturbation-family}
    Let $(\phi_\epsilon)_{\epsilon\in]0,1[}$ be a family of increasing strictly concave maps
$\phi_\epsilon:[0,\infty[\to[0,\infty[$ satisfying:
\begin{enumerate}
    \item For any fixed $d \ge 0$, the map $\epsilon \mapsto \phi_\epsilon(d)$ is convex, with $\phi_\epsilon(d) \to d$ as $\epsilon \to 0^+$.
    \item The right-derivative $\phi(d) \coloneqq  \lim_{\epsilon\to 0^+} \frac{\phi_\epsilon(d) - d}{\epsilon}$ exists and is a real valued strictly concave function on $[0, \infty[$, with lower bound $\phi(d) \geq -C(1+d)$ for some $C\in[0,\infty[$.
    \item $\phi_\epsilon(\cdot)$ satisfies sublinear growth, specifically, $\phi_\epsilon(d) \leq 2(1+d)$ for all $d \geq 0$ and $\epsilon \in ]0,1[$.
\end{enumerate}
\end{assumption}
A concrete family satisfying all these conditions is $\phi_\epsilon(d)=d+\epsilon \sqrt{d}$.

\begin{theorem}[Convergence of concave-cost optimizers to the generalized EC map.]
\label{thm:7.1'}
Let $\mu,\nu$ be mutually singular finite positive Borel measures on $\mathbb{R}^n$ with equal positive total mass and finite first moments, and assume $\mu\ll\mathcal{L}^n$. For $\epsilon \in ]0,1[$, consider the cost $c_{\epsilon}(x,y)=\phi_{\epsilon}(\|x-y\|)$
where $(\phi_\epsilon)_{\epsilon\in]0,1[}$ is a family of increasing strictly
concave maps satisfying
Assumption~\ref{ass:concave-perturbation-family} with first order profile $\phi(d)$.

There is a unique optimal transport plan in $\Pi(\mu,\nu)$ for $c_\epsilon$, and it is induced by a transport map $t_{\epsilon}$. As $\epsilon\to0^{+}$, the maps $t_{\epsilon}$ converge in $\mu$-measure to the 
generalized EC map $t_{\#}$, which depends only on $\mu$ and $\nu$ and not on the perturbation family $(\phi_\epsilon)_{\epsilon\in]0,1[}$ or its first-order profile.
\label{thm:main-theorem}
\end{theorem}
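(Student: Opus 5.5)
The plan follows the $\Gamma$-convergence scheme of Ambrosio and Pratelli, with concavity reversing the comparisons of the convex theory.

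\emph{Step 1 (fixed $\epsilon$).} Existence of an optimal plan $\gamma_\epsilon\in\Pi(\mu,\nu)$ for $c_\epsilon$ follows from lower semicontinuity and tightness. Finiteness of its cost follows from the growth bound $\phi_\epsilon(d)\le 2(1+d)$ and the finite first moments. Since $\mu\perp\nu$ and $\mu\ll\mathcal L^n$, the Gangbo--McCann / Pegon--Piazzoli--Santambrogio theory recalled in Section~\ref{sec:formulation} applies to the strictly concave increasing cost. It gives that $\gamma_\epsilon$ is unique and induced by a Borel map $t_\epsilon$.

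\emph{Step 2 (reduction to a secondary problem).} Pass to a subsequence with $\gamma_\epsilon\rightharpoonup\gamma_0$ narrowly; tightness comes from fixed marginals. Let $\gamma\in\Pi_1(\mu,\nu)$ be arbitrary. Convexity of $\epsilon\mapsto\phi_\epsilon(d)$ together with $\phi_\epsilon\to\mathrm{id}$ gives $\phi_\epsilon(d)\ge d+\epsilon\phi(d)$. It also gives that $(\phi_\epsilon(d)-d)/\epsilon$ decreases to $\phi(d)$. Comparing $\gamma_\epsilon$ with $\gamma$ then yields
\[
\int \|x-y\|\,d\gamma_\epsilon+\epsilon\int\phi(\|x-y\|)\,d\gamma_\epsilon\le\int\phi_\epsilon(\|x-y\|)\,d\gamma_\epsilon\le\int\phi_\epsilon(\|x-y\|)\,d\gamma .
\]
The right side equals $\int\|x-y\|\,d\gamma+\epsilon\int\phi\,d\gamma+o(\epsilon)$ by monotone convergence, using the bound $\phi_{1/2}\le 2(1+d)$ for integrability. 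Two consequences follow.
\begin{itemize}
\item Dropping the $\epsilon$ terms and letting $\epsilon\to0$ shows that $\gamma_0\in\Pi_1(\mu,\nu)$, by lower semicontinuity of the linear cost.
\item Subtracting the $L^1$ minimum, dividing by $\epsilon$, and using $\phi\ge -C(1+d)$ together with lower semicontinuity of the concave continuous $\phi$ shows that $\gamma_0$ minimizes $\int\phi(\|x-y\|)\,d\gamma$ over $\Pi_1(\mu,\nu)$.
\end{itemize}
The linear lower bound is what makes $\int\phi\,d\gamma_\epsilon$ lower semicontinuous. The term $C(1+d)$ is handled by uniform integrability, since first moments are fixed by the marginals.

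\emph{Step 3 (identification).} Invoke Theorem~\ref{thm:7.2'}, which states that the plan $(\mathrm{id},t_\#)_\#\mu$ built in Theorem~\ref{thm:6.1'} is the unique minimizer of this secondary problem for every admissible strictly concave profile $\phi$. Hence every subsequential limit equals $(\mathrm{id},t_\#)_\#\mu$, and the whole family converges narrowly. Moreover, $t_\#$ depends only on $(\mu,\nu)$.

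\emph{Step 4 (convergence in measure).} Apply the standard lemma: if $(\mathrm{id},t_\epsilon)_\#\mu\rightharpoonup(\mathrm{id},t)_\#\mu$, then $t_\epsilon\to t$ in $\mu$-measure. To prove it, approximate $t$ by a continuous $\tilde t$ in $\mu$-measure using Lusin's theorem. Then test the plans against the bounded continuous function $\min(1,\|y-\tilde t(x)\|)$, which is small under the limit plan, and transfer this to $\mu(\|t_\epsilon-t\|>\delta)\to0$.

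The main obstacle is Step 3, specifically the content of Theorem~\ref{thm:7.2'}, since Steps 1, 2 and 4 are routine. Proving it requires several pieces.
\begin{itemize}
\item Disintegrate any $\gamma\in\Pi_1$ along maximal transport rays.
\item Check that almost every conditional source is atomless; this uses $\mu\ll\mathcal L^n$ and the Lipschitz regularity of ray directions.
\item On each ray, show that Juillet's excursion coupling strictly minimizes $\int\phi(|x-y|)$ among $L^1$-optimal one-dimensional couplings for general strictly concave $\phi$, not just powers. The intended route is to verify that a $\phi$-minimizer satisfies the three arch properties via local swapping arguments, then use uniqueness of the arch-characterized coupling.
\item Glue the raywise maps measurably.
\end{itemize}
Within the present theorem, I take these results as given.
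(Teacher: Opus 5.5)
Your proposal is correct and follows essentially the same route as the paper. The steps are: Pegon--Piazzoli--Santambrogio for fixed $\epsilon$; the reduction of Proposition~\ref{prop:7.1'} to the secondary problem, which you carry out directly by comparing with a constant recovery plan instead of invoking the abstract fundamental theorem of $\Gamma$-convergence; identification of every subsequential limit via Theorem~\ref{thm:7.2'}; and a Lusin argument to pass from narrow convergence of graph plans to convergence in $\mu$-measure. One small correction: $\phi$ need not be continuous, since a finite concave function on $[0,\infty[$ may have $\phi(0)<\lim_{d\to0^+}\phi(d)$; it is, however, always lower semicontinuous, which is all your Step~2 uses.
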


\begin{corollary}[General-measure form]
\label{cor:general-measure-main}
For $\epsilon \in ]0,1[$, consider the cost $c_{\epsilon}(x,y)=\phi_{\epsilon}(\|x-y\|)$
where $(\phi_\epsilon)_{\epsilon\in]0,1[}$ is a family of increasing strictly
concave maps satisfying
Assumption~\ref{ass:concave-perturbation-family}.
Let
$\mu,\nu\in\mathcal P_1(\mathbb R^n)$ and define
\[
\eta\coloneqq \mu\wedge\nu,\qquad \mu_0\coloneqq \mu-\eta,\qquad \nu_0\coloneqq \nu-\eta,\qquad
M_0\coloneqq \mu_0(\mathbb{R}^n)=\nu_0(\mathbb{R}^n).
\]
Assume $\mu_0\ll\mathcal{L}^n$. If $M_0=0$, then the unique optimal plan for $c_\epsilon$ is the diagonal plan on $\mu=\nu$. If $M_0>0$, then for each $\epsilon\in]0,1[$ the unique optimal plan for $c_\epsilon$ between $\mu$ and $\nu$ is
\[
\gamma_\epsilon=(\mathrm{Id},\mathrm{Id})_\#\eta+(\mathrm{Id},t_\epsilon)_\#\mu_0,
\]
where $t_\epsilon$ is the optimizer between the mutually singular residuals $\mu_0$ and $\nu_0$ from Theorem~\ref{thm:main-theorem}.
The residual maps $t_\epsilon$ converge to the residual generalized EC map $t_\#$ in $\mu_0$-measure. 
\end{corollary}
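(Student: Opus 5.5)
The corollary reduces to Theorem~\ref{thm:main-theorem} once we know that, under any increasing strictly concave cost, the common mass $\eta=\mu\wedge\nu$ is matched on the diagonal. The plan has four steps.

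\textbf{Step 1: the structural decomposition.} Fix $\epsilon\in]0,1[$. The cost $l=\phi_\epsilon$ is increasing, strictly concave and nonnegative. Since $\phi_\epsilon(d)\le 2(1+d)$ and $\mu,\nu$ have finite first moments, the transport cost is finite for every plan and optimal plans exist. Because $(\mu-\nu)_+=\mu_0\ll\mathcal L^n$, the Gangbo--McCann/Pegon--Piazzoli--Santambrogio result recalled in Section~\ref{sec:formulation} applies. It gives that every optimal plan $\gamma$ has the form $(\mathrm{Id},\mathrm{Id})_\#\eta+\gamma_0$, with $\gamma_0\in\Pi(\mu_0,\nu_0)$ optimal for $c_\epsilon$ between the residuals.

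If one wants this self-contained, I would argue through strict subadditivity. Concavity with $\phi_\epsilon(0)\ge 0$ gives $\phi_\epsilon(a+b)\le\phi_\epsilon(a)+\phi_\epsilon(b)$, strictly when $a,b>0$ and $\phi_\epsilon$ is strictly concave. Combining this with $c$-cyclical monotonicity shows that any plan moving mass that could stay put can be strictly improved. I expect to cite this step rather than reprove it.

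\textbf{Step 2: the degenerate case.} If $M_0=0$, then $\mu=\nu=\eta$ and $\gamma_0=0$. The unique optimal plan is therefore the diagonal one; it has cost $\phi_\epsilon(0)$ times the mass, which is clearly minimal because $\phi_\epsilon$ is increasing.

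\textbf{Step 3: reduction to Theorem~\ref{thm:main-theorem}.} If $M_0>0$, then $\mu_0$ and $\nu_0$ are mutually singular by construction of $\mu\wedge\nu$. They are finite positive Borel measures with equal mass $M_0>0$, they have finite first moments (being dominated by $\mu$ and $\nu$), and $\mu_0\ll\mathcal L^n$. Theorem~\ref{thm:main-theorem} then gives a unique optimal $\gamma_0=(\mathrm{Id},t_\epsilon)_\#\mu_0$.

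Uniqueness of the full $\gamma_\epsilon$ follows from Step 1. Any optimizer equals $(\mathrm{Id},\mathrm{Id})_\#\eta$ plus a residual optimizer, and the residual optimizer is unique.

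\textbf{Step 4: convergence.} Convergence of $t_\epsilon$ to $t_\#$ in $\mu_0$-measure is then exactly the conclusion of Theorem~\ref{thm:main-theorem} applied to $(\mu_0,\nu_0)$, with $t_\#$ the generalized EC map of the residuals.

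\textbf{Main obstacle.} The only delicate point is Step 1. We need the cited decomposition in the form that \emph{every} optimizer, not merely some optimizer, fixes the common mass; without this, uniqueness for $\gamma_\epsilon$ does not follow. I would check that the cited statement includes this, and if not, supply the strict-subadditivity exchange argument. That argument needs care with $\phi_\epsilon(0)$, which may be positive, but positivity only strengthens subadditivity.
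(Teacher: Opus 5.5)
Your proposal is correct and follows essentially the same route as the paper: apply the Pegon--Piazzoli--Santambrogio theorem to the full pair $(\mu,\nu)$ to get the diagonal-plus-residual form, then invoke Theorem~\ref{thm:main-theorem} on $(\mu_0,\nu_0)$ for the residual uniqueness and the convergence in $\mu_0$-measure. The obstacle you flag does not arise, because that cited Main Theorem already asserts uniqueness of the full optimizer. The paper then only needs the exchange argument (a cheaper residual coupling would give a cheaper full plan) to identify the off-diagonal part with $(\mathrm{Id},t_\epsilon)_\#\mu_0$.
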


\begin{proof}
The residual measures $\mu_0$ and $\nu_0$ are mutually singular finite positive Borel measures with equal total mass $M_0$ and finite first moments, satisfying $\mu_0\ll\mathcal{L}^n$. If $M_0=0$, then $\mu=\nu=\eta$, and the diagonal plan is the unique optimizer because $c_\epsilon(x,y)$ is minimized only on the diagonal. Assume now that $M_0>0$ and fix $\epsilon\in]0,1[$. 
By Assumption~\ref{ass:concave-perturbation-family},
$\phi_\epsilon$ is finite-valued, nonnegative, increasing, and strictly
concave.
Since $\mu_0=(\mu-\nu)_+\ll\mathcal L^n$ and every
$(n-1)$-rectifiable set is $\mathcal L^n$-negligible, the rectifiability
hypothesis of \cite[Main Theorem]{pegon2013concave-general} is satisfied.
Applying that theorem to the full pair $(\mu,\nu)$ and the cost
$c_\epsilon(x,y)=\phi_\epsilon(\|x-y\|)$, the unique full optimizer has the form
\[
(\mathrm{Id},\mathrm{Id})_\#\eta+(\mathrm{Id},T_\epsilon)_\#\mu_0
\]
for a transport map $T_\epsilon$ from $\mu_0$ to $\nu_0$. The off-diagonal summand must be optimal for the residual problem between $\mu_0$ and $\nu_0$: otherwise, replacing $(\mathrm{Id},T_\epsilon)_\#\mu_0$ by a cheaper residual coupling while keeping $(\mathrm{Id},\mathrm{Id})_\#\eta$ fixed would give an admissible full coupling with strictly smaller $c_\epsilon$-cost. By Theorem~\ref{thm:main-theorem} applied to $(\mu_0,\nu_0)$, the residual optimizer is unique and equals $(\mathrm{Id},t_\epsilon)_\#\mu_0$. Hence $T_\epsilon=t_\epsilon$ $\mu_0$-a.e., which gives the displayed formula for $\gamma_\epsilon$. The convergence $t_\epsilon\to t_\#$ in $\mu_0$-measure is exactly the convergence statement in Theorem~\ref{thm:main-theorem} for the residual pair.
\end{proof}

We prove Theorem~\ref{thm:main-theorem} in Sections~\ref{sec:proof-of-main-theorem} and \ref{sec:proof-of-6.1'}.

A natural class of concave costs is the power costs, $c(x,y)= \|x-y \|^{1-\epsilon}$ for $\epsilon\in ]0,1[$. Theorem~\ref{thm:main-theorem} does not itself apply to these costs because the corresponding first-order profile $\phi(d)=-d\ln d$ is not bounded below by $-C(1+d)$. In Section~\ref{sec:power-costs}, we provide a version of our main theorem for power costs, under a slightly stronger assumption than finite first moments, namely, finite $\|x\|\ln(1+\|x\|)$ moments for the mutually singular measures (Theorem~\ref{thm:main-theorem-power-costs}), and obtain convergence to the same intrinsic map $t_\#$.

\section{Analysis leading to a proof of the Main Theorem}
\label{sec:proof-of-main-theorem}

The proof strategy follows that of Ambrosio and Pratelli \cite{ambrosio2003existence}, with significant changes to reflect that our costs are concave rather than convex.
Because the main analysis is formulated for mutually singular measures, every admissible plan has a diagonal-free full-mass supporting set.  Indeed, if $\mu\perp\nu$, choose a Borel set $A$ with $\mu(A^c)=0$ and $\nu(A)=0$; then every $\gamma\in\Pi(\mu,\nu)$ is concentrated on $A\times A^c$, which is disjoint from the diagonal. %

We first state the necessary geometric definitions. %
Given a transport plan concentrated on a diagonal-free set $\Gamma \subset (\mathbb{R}^n \times \mathbb{R}^n) \setminus \{(x,x) : x \in \mathbb{R}^n\}$:
\begin{itemize}
    \item A \textbf{transport ray} is an open oriented segment $]]x,y[[$ for $x \neq y$ and $(x,y) \in \Gamma$. The corresponding closed oriented segment is denoted $[[x,y]]$.
    \item The \textbf{transport set}, $T_\Gamma$, is the union of all transport rays: $T_\Gamma \coloneqq  \bigcup_{(x,y)\in\Gamma} ]]x,y[[$.
    \item The \textbf{left-transport set}, $T^l_\Gamma$, is the union of all segments $[[x,y[[$ for pairs $(x,y) \in \Gamma$.
    \item A \textbf{maximal transport ray} is an oriented non-empty
    open interval $S \subset \mathbb{R}^n$, with direction unit vector $v$, satisfying the following two conditions:
\begin{enumerate}
    \item[(a)] For any point $z \in S$, there exists a transport ray $]]x,y[[$ from $\Gamma$ such that $z \in ]]x,y[[$ and the direction vector $y-x$ is parallel to and co-oriented with $v$.
    \item[(b)] Any open interval $S' \supset S$ that also satisfies property (a) for the same direction $v$ must be equal to $S$.
\end{enumerate}
In contrast with the stated definition in \cite{ambrosio2003existence}, the above definition makes explicit that maximal transport rays are \emph{oriented} (they are maximal unions of collinear, consistently oriented and contiguous transport rays).\footnote{Our definition also requires $]]x,y[[$ to be collinear with $S$, which was not mentioned in the stated definition in \cite{ambrosio2003existence}. Collinearity is necessary to ensure that, under the no-crossing condition \eqref{eq:18} any point in $T_\Gamma$ is contained in a \emph{unique} maximal transport ray, so we (and they) do need to impose this requirement.\label{fn:maximal-ray-gap}}
\end{itemize}
Extending a transport ray within its oriented line shows that it is contained in a maximal transport ray.  Hence every point of $T_\Gamma$ belongs to at least one maximal transport ray.
We assume the following geometric constraint, referred to as the no-crossing condition in \cite{ambrosio2003existence}. This condition states that two closed transport rays with different orientations can meet only at a shared left endpoint or at a shared right endpoint.
\begin{equation} \label{eq:18}
    [[x,y]] \cap [[x',y']] \neq \emptyset \implies x=x' \text{ or } y=y' \quad \text{whenever } \tau(x,y) \neq \tau(x',y'),
\end{equation}
where $\tau(x,y) = (y-x)/\|y-x\|$ is the direction of the transport ray.
Under condition \eqref{eq:18}, two maximal transport rays containing the same point of $T_\Gamma$ have the same orientation and coincide; thus the maximal ray through such a point is unique.
Therefore $\Gamma$ induces a map $\pi_\Gamma: T_\Gamma \to \mathcal{S}_o(\mathbb{R}^n)$ which associates to any point the maximal transport ray containing it, where $\mathcal{S}_o(\mathbb{R}^n)$ denotes the space of open oriented intervals/rays. We also denote by $\tau_\Gamma: T_\Gamma \to \{v\in\mathbb{R}^n:\|v\|=1\}$ the map which gives the direction of the maximal transport ray containing the point.

\begin{lemma}[Borel maximal-ray map]
\label{lem:borel-maximal-ray-map}
Let
\[
    \Gamma\subset
    (\mathbb R^n\times\mathbb R^n)
    \setminus\{(x,x):x\in\mathbb R^n\}
\]
be $\sigma$-compact and satisfy condition~\eqref{eq:18}. Then $T_\Gamma$
is Borel and
\[
    \pi_\Gamma:T_\Gamma\longrightarrow\mathcal S_o(\mathbb R^n)
\]
is Borel.
\end{lemma}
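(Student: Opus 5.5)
We write $\Gamma=\bigcup_k K_k$ with each $K_k$ compact and diagonal-free, and we may take the union increasing. For $\delta>0$ let $K_k^\delta=\{(x,y)\in K_k:\|x-y\|\ge\delta\}$, which is compact. The map
\[
F:(x,y,s)\mapsto x+s(y-x)
\]
is continuous, so $T_\Gamma=\bigcup_{k,m} F\bigl(K_k\times[1/m,1-1/m]\bigr)$. This is a countable union of compact sets, and hence $\sigma$-compact and Borel.

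For the ray map, we identify $\mathcal S_o(\mathbb R^n)$ with triples (direction $v\in S^{n-1}$, endpoints $a,b$) with values in $\overline{\mathbb R}^n$-type extended endpoints. Concretely, a maximal ray through $z$ is $z+]-\alpha(z),\beta(z)[\,\tau_\Gamma(z)$ with $\alpha,\beta\in]0,\infty]$. We give $\mathcal S_o$ the Borel structure making $(v,a,b)$ or $(z\mapsto \tau,\alpha,\beta)$ measurable. It then suffices to show that $\tau_\Gamma$, $\alpha$ and $\beta$ are Borel on $T_\Gamma$.

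\textbf{Step 1: $\tau_\Gamma$.} Using \eqref{eq:18}, every transport ray through $z$ has direction $\pm\tau_\Gamma(z)$ with a fixed orientation: two rays through $z$ in different directions would meet at an interior point, which is excluded. So for each closed set $C\subset S^{n-1}$,
\[
\tau_\Gamma^{-1}(C)=\bigcup_{k,m}F\bigl(\{(x,y)\in K_k:\tau(x,y)\in C\}\times[1/m,1-1/m]\bigr),
\]
which is $\sigma$-compact. Hence $\tau_\Gamma$ is Borel.

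\textbf{Step 2: $\beta$.} By maximality, $\beta(z)\ge r$ iff the whole segment $[z,z+r'\tau_\Gamma(z)]$ is covered by co-directed collinear transport rays for every $r'<r$. We express this via countable unions over rational data. The main obstacle is that "covered by a chain of rays" involves arbitrarily many rays.

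We handle it by compactness. Fix $q$ rational, and define
\[
A_q=\{z\in T_\Gamma:\ z+s\tau_\Gamma(z)\in T_\Gamma,\ \tau_\Gamma(z+s\tau_\Gamma(z))=\tau_\Gamma(z)\ \ \forall s\in[0,q]\}.
\]
Collinearity is automatic here: a point $w$ on the line with the same direction lies on a ray containing it, and that ray is parallel to and passes through the line. So the union of such rays is an interval in the line, and $\{\beta>q\}\supset A_q$. Conversely, $\beta(z)>q$ implies $z\in A_{q}$. Thus
\[
\{\beta>r\}=\bigcup_{q>r,\,q\in\mathbb Q}A_q .
\]

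To see that $A_q$ is Borel, consider the Borel set $G=\{(z,s): z+s\tau_\Gamma(z)\notin T_\Gamma \text{ or } \tau_\Gamma(\cdot)\ne\tau_\Gamma(z)\}$, using Step 1. Then $A_q=T_\Gamma\setminus \mathrm{proj}(G\cap \{s\le q\})$. A projection of a Borel set is only analytic, so we instead show the openness structure. Along the line, the set of $s$ where the condition holds is an open interval containing $0$, namely the maximal ray parametrized. Hence it suffices to check it for rational $s$ in $[0,q]$, together with the endpoints, which are limits of rational points. That gives a countable intersection of Borel sets. This reduction, that the good $s$-set is an interval so rational checking suffices, is the delicate point, and it is where \eqref{eq:18} and the collinearity convention of the definition are used.

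The same argument gives $\alpha$. Then $\pi_\Gamma(z)=(\tau_\Gamma(z),\,z-\alpha(z)\tau_\Gamma(z),\,z+\beta(z)\tau_\Gamma(z))$ is Borel.
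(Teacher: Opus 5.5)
\textbf{There is a genuine gap in Step~2.} Your argument that $T_\Gamma$ is $\sigma$-compact is fine. Step~1 is also fine, and computing $\tau_\Gamma^{-1}(C)$ directly as a $\sigma$-compact set is a more elementary substitute for the paper's Lusin--Souslin inversion of the incidence map.

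The problem is your claim that $G_z\coloneqq\{s: z+s\tau_\Gamma(z)\in T_\Gamma,\ \tau_\Gamma(z+s\tau_\Gamma(z))=\tau_\Gamma(z)\}$ is ``an open interval containing $0$, namely the maximal ray parametrized.'' This is false. $G_z$ is the union of \emph{all} maximal rays lying on the line $z+\mathbb R\tau_\Gamma(z)$ with orientation $\tau_\Gamma(z)$. Condition~\eqref{eq:18} says nothing about co-oriented collinear segments, so nothing glues these rays together.

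Counterexample: take $c=\sqrt2/2$ and $\Gamma=\{(-e_1,ce_1),(ce_1,3e_1)\}$. This set is compact and diagonal-free, and \eqref{eq:18} holds vacuously. Then $T_\Gamma=]]-e_1,ce_1[[\,\cup\,]]ce_1,3e_1[[$ omits $ce_1$. Hence $\beta(0)=c<2$ and $0\notin A_2$. Yet every rational $s\in[0,2]$, and both endpoints $s=0,2$, pass your test. More generally, the gap set $[0,q]\setminus G_z$ can avoid any prescribed countable set, so no countable family of pointwise checks characterizes $A_q$. Selecting the connected component of $G_z$ that contains $0$ is exactly the difficulty this lemma must overcome.

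\textbf{How to repair it.} $G_z$ is open: each good $s$ lies in an open transport ray contained in the line, and all points of that ray have direction $\tau_\Gamma(z)$ by your Step~1. Hence the $s$-sections of the Borel set $G\cap(T_\Gamma\times[0,q])$ are the compact sets $[0,q]\setminus G_z$. The Arsenin--Kunugui theorem \cite[Theorem~18.18]{kechris1995classical} then makes the projection Borel, so $A_q$ is Borel.

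This is precisely the paper's mechanism. It applies the same theorem to the set $\mathcal D$, whose sections $[\min\{s,t\},\max\{s,t\}]\setminus\mathcal A_{v,a}$ are compact, to select the component of $\mathcal A_{v,a}$ containing $t_z$. Alternatively, a finite-subcover argument would show each $A_q$ is $\sigma$-compact: cover $[z,z+q\tau_\Gamma(z)]$ by shrunken closed subsegments of finitely many co-oriented pairs from a single $K_k$.

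\textbf{A smaller point.} You cannot simply declare a Borel structure on $\mathcal S_o(\mathbb R^n)$. It is the one induced by $d_{\mathrm{AP}}$. You still need to check that $(v,\alpha,\beta)\mapsto$ ray, including infinite endpoints, is Borel into $(\mathcal S_o(\mathbb R^n),d_{\mathrm{AP}})$. The paper does this via the code space and the decoding map $J_o$.
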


\begin{proof}
Write $\overline{\mathbb R}=[-\infty,+\infty]$, with its order topology, and
introduce the auxiliary code space
\[
    \mathcal R_{\mathrm{code}}
    \coloneqq
    \left\{(a,v,\alpha,\beta):
    \begin{array}{l}
        a\in\mathbb R^n,\quad v\in\mathbb S^{n-1},\quad
        \langle a,v\rangle=0,\\
        \alpha,\beta\in\overline{\mathbb R},\quad \alpha<\beta
    \end{array}
    \right\}.
\]
It is an open subset of a closed subspace of
\[
    \mathbb R^n\times\mathbb S^{n-1}
    \times\overline{\mathbb R}^{\,2},
\]
so it is locally compact Polish and, in particular, standard Borel. The code
$C=(a,v,\alpha,\beta)$ represents the oriented open ray
\[
    C^\circ=\{a+tv:\alpha<t<\beta\}.
\]
Every nondegenerate oriented open interval, half-line, or affine line has a
unique such code.

We first verify that the decoding map
\[
    J_o:\mathcal R_{\mathrm{code}}\longrightarrow
    \mathcal S_o(\mathbb R^n),
    \qquad J_o(C)\coloneqq C^\circ,
\]
is Borel. For $S\in\mathcal S_o(\mathbb R^n)$ and $R\in\mathbb N$, let
$x_R(S),y_R(S)$ be the endpoints, in oriented order, of
$\overline S\cap\overline B_R$; assign the fixed pair $(0,0)$ when the
intersection is empty, and repeat the endpoint when it is a singleton.
Following \cite[Definition~6.1]{ambrosio2003existence}, define
\begin{align}
    d_{\mathrm{AP}}(S,S')
    \coloneqq
    \sum_{R=1}^{\infty}2^{-R}
    \frac{\|x_R(S)-x_R(S')\|+\|y_R(S)-y_R(S')\|}
    {1+\|x_R(S)-x_R(S')\|+\|y_R(S)-y_R(S')\|}.
    \label{eq:AP-metric}
\end{align}
This is the metric on $\mathcal S_o(\mathbb R^n)$ induced by the closure
correspondence. The oriented-order, empty-intersection, and singleton
conventions above make explicit the cases left implicit in the displayed
formula of Ambrosio and Pratelli.\footnote{Both \cite[Definition~6.1]{ambrosio2003existence} and \cite[p.~221]{maggi2023optimal} leave the endpoint ordering unstated; the natural-looking fix of ordering by the unordered pair alone makes $d_{\mathrm{AP}}$ unable to distinguish a segment from its reversal. Our oriented-order convention avoids this and is needed for the Borel argument in Lemma~\ref{lem:measurable-raywise-ec}.\label{fn:truncation-metric-gap}}

For $C=(a,v,\alpha,\beta)$ and $R\in\mathbb N$, on $\|a\|\le R$ put
\[
    \rho_R(C)\coloneqq\sqrt{R^2-\|a\|^2},\qquad
    L_R(C)\coloneqq\max\{\alpha,-\rho_R(C)\},\qquad
    U_R(C)\coloneqq\min\{\beta,\rho_R(C)\}.
\]
The closure of $C^\circ$ meets $\overline B_R$ exactly when
\[
    \|a\|\le R\qquad\hbox{and}\qquad L_R(C)\le U_R(C).
\]
On this Borel set the ordered endpoints of $J_o(C)$ are
\[
    x_R(J_o(C))=a+L_R(C)v,\qquad
    y_R(J_o(C))=a+U_R(C)v,
\]
and off this set both endpoints are $0$. Thus
$C\mapsto\bigl(x_R(J_o(C)),y_R(J_o(C))\bigr)$ is Borel. For each fixed
$S\in\mathcal S_o(\mathbb R^n)$,
the function
\[
    C\longmapsto d_{\mathrm{AP}}(J_o(C),S)
\]
is therefore a countable sum of Borel functions and is Borel. Since
$\mathcal S_o(\mathbb R^n)$ is separable, balls with centers in a countable
dense set and positive rational radii form a countable base. Their preimages
under $J_o$ are Borel, which proves that $J_o$ is Borel.

We now construct a Borel code for the maximal ray through each point. Let
\[
    \mathcal L
    \coloneqq
    \{(v,a)\in\mathbb S^{n-1}\times\mathbb R^n:
      \langle a,v\rangle=0\}.
\]
For $q=(x,y)\in\Gamma$, define
\[
    v(q)\coloneqq\frac{y-x}{\|y-x\|},\qquad
    a(q)\coloneqq x-\langle x,v(q)\rangle v(q),
\]
and
\[
    \ell(q)\coloneqq\langle x,v(q)\rangle,\qquad
    u(q)\coloneqq\langle y,v(q)\rangle.
\]
Then
\[
    ]]x,y[[
    =
    \{a(q)+tv(q):\ell(q)<t<u(q)\}.
\]
Define the oriented incidence set
\[
    \mathcal A
    \coloneqq
    \left\{
      \bigl(v(q),a(q),(1-s)\ell(q)+su(q)\bigr):
      q\in\Gamma,\ 0<s<1
    \right\}
    \subset\mathcal L\times\mathbb R.
\]
The defining map from $\Gamma\times]0,1[$ is continuous. Its domain is
$\sigma$-compact, so $\mathcal A$ is a $K_\sigma$ subset of the Polish
space $\mathcal L\times\mathbb R$ and is standard Borel. For each $(v,a)$,
the section
\[
    \mathcal A_{v,a}\coloneqq\{t:(v,a,t)\in\mathcal A\}
\]
is open, being a union of open intervals.

Let
\[
    e:\mathcal A\longrightarrow\mathbb R^n,\qquad
    e(v,a,t)\coloneqq a+tv.
\]
Its image is exactly $T_\Gamma$. We claim that $e$ is injective. Suppose
that two transport segments $]]x,y[[$ and $]]x',y'[[$ contain the same
point $z$ in their relative interiors. If their normalized directions
differ, condition~\eqref{eq:18} gives $x=x'$ or $y=y'$. In the first case,
the vectors from the common source to $z$ show that the normalized
directions are equal; in the second, the vectors from $z$ to the common
target give the same conclusion. Hence all transport segments containing
$z$ in their relative interiors have the same normalized direction. The
equalities
\[
    a=z-\langle z,v\rangle v,\qquad t=\langle z,v\rangle
\]
then give equality of the remaining incidence coordinates, proving
injectivity.

Since $\mathcal A$ is $K_\sigma$ and $e$ is continuous,
\[
    T_\Gamma=e(\mathcal A)
\]
is $K_\sigma$, hence Borel. The Lusin--Souslin theorem
\cite[Theorem~15.1]{kechris1995classical} now shows that
\[
    \xi\coloneqq(e|_{\mathcal A})^{-1}:T_\Gamma\longrightarrow\mathcal A
\]
is Borel. Write
\[
    \xi(z)=(v_z,a_z,t_z).
\]

It remains to select the connected component of
$\mathcal A_{v_z,a_z}$ containing $t_z$ in a Borel way. For
$\xi=(v,a,t)\in\mathcal A$ and $s\in\mathbb R$, let
$\operatorname{Conn}(\xi,s)$ mean that $s$ lies in the same connected
component of $\mathcal A_{v,a}$ as $t$. Equivalently,
\[
    [\min\{s,t\},\max\{s,t\}]\subset\mathcal A_{v,a}.
\]
In the ambient space $\mathcal A\times\mathbb R^2$, define
\[
    \mathcal D
    \coloneqq
    \left\{((v,a,t),s,w)\in\mathcal A\times\mathbb R^2:
        \min\{s,t\}\le w\le\max\{s,t\},\quad
        (v,a,w)\notin\mathcal A
    \right\}.
\]
This set is Borel, and its section over $(\xi,s)$ in the $w$-coordinate is
\[
    [\min\{s,t\},\max\{s,t\}]\setminus\mathcal A_{v,a},
\]
which is compact because $\mathcal A_{v,a}$ is open. The
Arsenin--Kunugui theorem
\cite[Theorem~18.18]{kechris1995classical} therefore shows that
\[
    P\coloneqq\operatorname{proj}_{\mathcal A\times\mathbb R}\mathcal D
\]
is Borel. Consequently
\[
    \operatorname{Conn}(\xi,s)
    \quad\Longleftrightarrow\quad
    (\xi,s)\notin P
\]
is a Borel relation.

Define
\[
    \alpha(\xi)\coloneqq\inf\{s:\operatorname{Conn}(\xi,s)\},\qquad
    \beta(\xi)\coloneqq\sup\{s:\operatorname{Conn}(\xi,s)\}.
\]
The set in these formulas is the nonempty open component containing $t$.
For every $c\in\mathbb R$, density of $\mathbb Q$ gives
\[
    \{\xi:\alpha(\xi)<c\}
    =
    \bigcup_{\substack{p\in\mathbb Q\\p<c}}
    \{\xi:\operatorname{Conn}(\xi,p)\},
\]
and
\[
    \{\xi:\beta(\xi)>c\}
    =
    \bigcup_{\substack{p\in\mathbb Q\\p>c}}
    \{\xi:\operatorname{Conn}(\xi,p)\}.
\]
Thus $\alpha$ and $\beta$ are Borel
$\overline{\mathbb R}$-valued functions, including when an endpoint is
infinite.

For $z\in T_\Gamma$, put
\[
    I_z\coloneqq
    \bigl(\alpha(\xi(z)),\beta(\xi(z))\bigr),\qquad
    S_z
    \coloneqq
    \{a_z+sv_z:s\in I_z\}.
\]
By construction, $I_z$ is the connected component of
$\mathcal A_{v_z,a_z}$ containing $t_z$. Hence every point of $S_z$ lies
in a transport ray co-oriented with $v_z$, so $S_z$ satisfies property~(a).
Any strictly larger open interval with the same direction and property~(a)
would yield a connected subset of $\mathcal A_{v_z,a_z}$ properly
containing $I_z$, which is impossible. Thus $S_z$ is maximal.

If $R$ is any maximal transport ray containing $z$, property~(a) supplies
a transport ray through $z$ co-oriented with $R$. Injectivity of $e$ forces
its incidence coordinates to equal $\xi(z)$, so $R$ has direction $v_z$
and supporting line $a_z+\mathbb R v_z$. Property~(a) then makes the
coordinate interval of $R$ a connected subset of
$\mathcal A_{v_z,a_z}$ containing $t_z$. Hence $R\subset S_z$, and
maximality of $R$ gives $R=S_z$. Consequently $S_z=\pi_\Gamma(z)$.

Since $I_z$ is nonempty, $\alpha(\xi(z))<\beta(\xi(z))$. Its auxiliary
code is
\[
    \widehat\pi_\Gamma(z)
    \coloneqq
    \bigl(a_z,v_z,\alpha(\xi(z)),\beta(\xi(z))\bigr)
    \in\mathcal R_{\mathrm{code}}.
\]
Every coordinate is Borel, so
$\widehat\pi_\Gamma:T_\Gamma\longrightarrow
    \mathcal R_{\mathrm{code}}$
is Borel. Finally,
$\pi_\Gamma=J_o\circ\widehat\pi_\Gamma$
is Borel.
\end{proof}

For comparison, related Borel ray-map statements appear in
\cite[Lemma~6.1]{ambrosio2003existence} and
\cite[Proposition~18.5]{maggi2023optimal}. Maggi's proposition assumes
that the generating set is closed, whereas
Lemma~\ref{lem:borel-maximal-ray-map} treats the present definition for
$\sigma$-compact $\Gamma$.

For a plan $\gamma$ concentrated on such a (diagonal-free, no-crossing)
set $\Gamma$, we denote by
$r:\Gamma\to\mathcal{S}_c(\mathbb{R}^n)$ the pair-to-ray map which sends
$(x,y)\in\Gamma$ to the closure of the unique oriented maximal transport ray
containing $]]x,y[[$, where $\mathcal S_c(\mathbb R^n)$ denotes the space of
closed oriented intervals/rays. 
Let
$\operatorname{cl}:\mathcal S_o(\mathbb R^n)\to
\mathcal S_c(\mathbb R^n)$ denote the closure map. For every
$(x,y)\in\Gamma$,
\begin{equation}
    r(x,y)
    =
    \operatorname{cl}\!\left(
        \pi_\Gamma\!\left(\frac{x+y}{2}\right)
    \right).
    \label{eq:pair-to-ray-midpoint}
\end{equation}
Indeed, $(x+y)/2\in]]x,y[[$.  The maximal ray containing
$]]x,y[[$ therefore contains this midpoint and, by uniqueness of the maximal
ray through a point of $T_\Gamma$, equals
$\pi_\Gamma((x+y)/2)$.  The midpoint representation avoids extending
$\pi_\Gamma$ to ray endpoints, since the midpoint always belongs to
$T_\Gamma$.
For given $\gamma$ and $\Gamma$, we write
$\sigma\coloneqq r_\#\gamma$ and denote the corresponding disintegration into closed
maximal transport rays by $\gamma=\gamma_C\otimes\sigma$. Finally, set
\[
    \mu_C \coloneqq  (\mathrm{proj}_1)_\#\gamma_C\, ,
    \qquad
    \nu_C \coloneqq  (\mathrm{proj}_2)_\#\gamma_C \, .
\]

For a cost function $c:\mathbb{R}^n\times\mathbb{R}^n\to\mathbb{R}$, a set
$\Gamma\subset\mathbb{R}^n\times\mathbb{R}^n$ is called
\emph{$c$-cyclically monotone} if, for every finite family
$(x_i,y_i)_{i=1}^N\subset\Gamma$ and every permutation $\tau$ of
$\{1,\ldots,N\}$,
\[
    \sum_{i=1}^N c(x_i,y_i)
    \le
    \sum_{i=1}^N c(x_i,y_{\tau(i)}).
\]

\begin{theorem}[Generalized Excursion-Coupled Map Construction for given $\Gamma$]
\label{thm:6.1'}
Let $\mu, \nu$ be mutually singular finite positive Borel measures on $\mathbb{R}^n$ with equal positive total mass and finite first moments.
Let $\gamma \in \Pi(\mu, \nu)$ be a finite transport plan concentrated on a $\sigma$-compact diagonal-free set %
$\Gamma$. Assume that the no-crossing condition \eqref{eq:18} holds and that:
\begin{enumerate}
    \item[(i)] $\mu$ is absolutely continuous with respect to $\mathcal{L}^n$;
    \item[(ii)] %
    $T^l_\Gamma \setminus T_\Gamma$
    is $\mu$-negligible;
    \item[(iii)] there exists a $\mu$-negligible set $N \subset T_\Gamma$ and an increasing sequence of compact sets $K_h$ such that $\tau_\Gamma |_{K_h}$ is a Lipschitz map and the union of $K_h$ is $T_\Gamma \setminus N$.
\end{enumerate}
Then there exists a transport plan $\gamma_\# \in \Pi(\mu, \nu)$ such that:
\begin{enumerate}
    \item[(a)] $\gamma_\#$ is induced by a transport map $t_\#$, and this plan $\gamma_\#$ is optimal for the cost $c(x,y)=\|x-y\|$ whenever $\Gamma$ is $c$-cyclically monotone.
    \item[(b)] $\gamma_\#$ is concentrated on the set of pairs $(x, y)$ such that %
    $[[x, y]]$ is contained in the closure of a maximal transport ray of $\Gamma$; %
    for $\sigma$-a.e. ray $C$, the underlying map $t_\#$ agrees $\mu_C$-a.e. with the one-dimensional excursion-coupling map defined in Theorem~\ref{thm:EC-map-gives-a-plan-1D} from $\mu_C$ to $\nu_C$ on $C$.
    \item[(c)] For any \textbf{concave} function $\phi : [0, +\infty[ \to \mathbb{R}$ satisfying $\phi(d)\ge -K_\phi(1+d)$ for all $d\ge0$ and some $K_\phi \in[0,\infty[$, %
    we have
    $$\int_{\mathbb{R}^n \times \mathbb{R}^n} \phi(\|x - y\|) \, d\gamma_\# \leq \int_{\mathbb{R}^n \times \mathbb{R}^n} \phi(\|x - y\|) \, d\gamma.$$
    If $\phi$ is strictly concave, the inequality above is strict unless $\gamma = \gamma_\#$.
\end{enumerate}
Moreover, for fixed $\mu,\nu,\Gamma$, the plan
$\gamma_\#^\Gamma$ depends only on $\mu,\nu,\Gamma$, and not on the
particular input plan $\gamma$ concentrated on $\Gamma$. In conclusions
(a)--(c), $\gamma_\#$ denotes this canonical plan $\gamma_\#^\Gamma$.
\end{theorem}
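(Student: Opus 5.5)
The plan is to follow the architecture of \cite[Theorem~6.1]{ambrosio2003existence}, with Juillet's one-dimensional excursion coupling replacing the monotone rearrangement on each ray.

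\emph{Step 1 (disintegration along rays).} By Lemma~\ref{lem:borel-maximal-ray-map} and \eqref{eq:pair-to-ray-midpoint}, the pair-to-ray map $r$ is Borel on $\Gamma$. Hence $\sigma=r_\#\gamma$ is a finite Borel measure on $\mathcal S_c(\mathbb R^n)$, and the disintegration $\gamma=\gamma_C\otimes\sigma$ exists. For $\sigma$-a.e.\ $C$, the conditional plan $\gamma_C$ is concentrated on pairs $(x,y)$ with $[[x,y]]\subset C$. Parametrizing $C$ by arclength in its orientation, $\mu_C$ and $\nu_C$ become measures on $\mathbb R$ coupled by a plan supported in $\{x\le y\}$. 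This gives stochastic order.

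\emph{Step 2 ($\mu_C$ is atomless and $\mu_C\perp\nu_C$).} Here hypotheses (i)--(iii) enter. By (ii), $\mu$ is concentrated on $T_\Gamma$, so $\mu$-a.e.\ source point lies in the open ray. On each compact $K_h$, the direction field $\tau_\Gamma$ is Lipschitz. The change-of-variables (Jacobian) argument of Ambrosio--Pratelli then shows that $\mu\llcorner K_h$ disintegrates with conditionals absolutely continuous with respect to $\mathcal H^1$ on the rays. This is the step I expect to be the main obstacle: one must redo the Lipschitz-foliation area-formula estimate carefully with the oriented ray space, and check that the $\sigma$-disintegration agrees with the geometric one. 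I would first try proving that the map $(z,s)\mapsto z+s\tau_\Gamma(z)$ on $K_h$ has a Jacobian bounded below for small $s$, as in \cite[Section~6]{ambrosio2003existence}. Mutual singularity of $\mu_C$ and $\nu_C$ for $\sigma$-a.e.\ $C$ follows by disintegrating a separating Borel set $A$.

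\emph{Step 3 (raywise construction and gluing).} On each good ray, apply Theorem~\ref{thm:EC-map-gives-a-plan-1D} to obtain the excursion-coupling map $t_C$ from $\mu_C$ to $\nu_C$. Because $\mu_C\le\nu_C$ in stochastic order, the arcs of $t_C$ lie within $C$ and are forward-oriented. Then define $t_\#(x)=t_{\pi_\Gamma(x)}(x)$ and $\gamma_\#=\int(\mathrm{Id},t_C)_\#\mu_C\,d\sigma(C)$. Borel measurability of $t_\#$ requires a measurable-kernel argument. The excursion coupling depends only on the distribution-function data of $(\mu_C,\nu_C)$, and $C\mapsto(\mu_C,\nu_C)$ is Borel. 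So I would express the 1D map through countably many Borel operations on cumulative distribution functions, and use the oriented code space from Lemma~\ref{lem:borel-maximal-ray-map} to pull these back to $\mathbb R^n$. This gives (b), and shows that $\gamma_\#$ depends only on $(\mu_C,\nu_C)$. Independence from $\gamma$ then follows because these conditionals are determined by $\mu,\nu,\Gamma$: each point of $T_\Gamma$ has a unique maximal ray, $\mu\llcorner\pi_\Gamma^{-1}(\cdot)$ is intrinsic, and $\nu$ restricted to ray closures is intrinsic up to the endpoint bookkeeping handled by (ii).

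\emph{Step 4 (optimality and comparison).} For (a), $\gamma_\#$ moves mass forward along rays of a $c$-cyclically monotone $\Gamma$. Using a Kantorovich potential $u$ with $u(x)-u(y)=\|x-y\|$ on $\Gamma$, this identity extends to all ordered pairs on a ray. Hence $\int\|x-y\|\,d\gamma_\#=\int(u\,d\mu-u\,d\nu)$, which is optimal; finite first moments handle integrability. For (c), work raywise. By the generalized one-dimensional characterization (Juillet's main theorem extended to all concave $\phi$ with the linear lower bound, as announced in the introduction), $\int\phi\,d(\mathrm{Id},t_C)_\#\mu_C\le\int\phi\,d\gamma_C$, with strict inequality for strictly concave $\phi$ unless $\gamma_C$ equals the excursion coupling. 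Integrating in $\sigma$ gives the inequality. The lower bound $\phi\ge -K_\phi(1+d)$ together with finite first moments rules out $-\infty$ issues. The strict case follows because equality forces $\gamma_C=\gamma_{\#,C}$ for $\sigma$-a.e.\ $C$, and hence $\gamma=\gamma_\#$.
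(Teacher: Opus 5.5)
Your architecture matches the paper's: disintegrate $\gamma$ along $r$, check the raywise hypotheses, apply Theorems~\ref{thm:EC-map-gives-a-plan-1D} and~\ref{thm:5.2'} on each ray, glue, and integrate the raywise comparison. Your treatment of (c) and its strict case is the paper's.

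\textbf{Gluing.} Here you genuinely differ. The paper never writes a formula for the excursion coupling. It takes the compact-valued multifunction $K(C)$ of forward couplings of $(\mu_C,\nu_C)$ and shows it is weakly measurable (Borel graph plus Arsenin--Kunugui). It then minimizes $J_\chi$ with $\chi(r)=1-e^{-r}$ using a measurable-minimum theorem. The uniqueness in Theorem~\ref{thm:5.2'} identifies the resulting selector with the excursion coupling, and \cite[Theorem~9.3]{ambrosio2003existence} gives a global Borel $t_\#$.

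Your CDF route can work, but as written it is only an intention. The content you must supply is the first-passage formula $T_C(s)=\inf\{t>s:F^C_\rho(t)<F^C_\rho(s)\}$, where $F^C_\rho=F_{\mu_C}-F_{\nu_C}\ge0$. It holds $\mu_C$-a.e. because almost every source point is an entrance at a generic level, and the graph then stays strictly above that level until the next exit. Joint measurability follows from $\{T_C(s)<c\}=\bigcup_{q\in\mathbb Q,\ s<q<c}\{F^C_\rho(q)<F^C_\rho(s)\}$ (by right-continuity), together with a Borel choice of origin and direction on each ray, e.g.\ from the code in Lemma~\ref{lem:borel-maximal-ray-map}. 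This route is more elementary and gives the map pointwise. The paper's route avoids analysing Juillet's level sets and reuses a uniqueness result it has already proved, at the cost of heavier descriptive set theory.

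\textbf{Part (a).} Your telescoping argument works provided $u$ comes from Rockafellar's construction applied to the $c$-cyclically monotone set $\Gamma$ itself. A duality potential only satisfies $u(x)-u(y)=\|x-y\|$ for $\gamma$-a.e.\ pair, not on every pair generating the rays. The paper's argument is cheaper. Both $\gamma_C$ and $\gamma_{C,\#}$ are forward with the same marginals, so their distance costs agree ray by ray. Then $\gamma$ is optimal by the sufficiency part of \cite[Theorem~3.2]{ambrosio2003existence}.

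\textbf{Atomlessness.} You need not redo any Jacobian estimate. The paper cites \cite[Theorem~9.4 and Remark~9.1]{ambrosio2003existence} for $\pi_\Gamma$ on $\bigcup_h K_h$. It then identifies the conditionals through $\sigma=(\bar\pi_\Gamma)_\#\mu$ and uniqueness of disintegration.

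\textbf{Gap: independence from $\gamma$.} Hypothesis (ii) concerns only $\mu$ and does no endpoint bookkeeping for $\nu$. What (ii) gives is that $\mu$ is concentrated on $T_\Gamma$. Hence $r(x,y)=\bar\pi_\Gamma(x)$ for $\gamma$-a.e.\ pair and $\sigma=(\bar\pi_\Gamma)_\#\mu$, which makes $\sigma$ and $\mu_C$ intrinsic. But $\nu$ may charge upper endpoints $e^+(C)\notin T_\Gamma$ shared by uncountably many rays, for example mass flowing radially into an atom of $\nu$. Then knowing $\nu$ on ray closures does not say how that atom is split among the rays.

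The missing step is mass balance. A target $y\in T_\Gamma$ satisfies $r(x,y)=\bar\pi_\Gamma(y)$ by the no-crossing condition. So $\nu_C|_{T_\Gamma}$ equals the intrinsic $\nu_C^{\mathrm{int}}$, obtained by disintegrating $\nu|_{T_\Gamma}$ along $\bar\pi_\Gamma$ and multiplying by $d\sigma_\nu/d\sigma$, where $\sigma_\nu=(\bar\pi_\Gamma)_\#(\nu|_{T_\Gamma})\le\sigma$. Since $\mu_C$ and $\nu_C$ are probability measures, the remaining mass is forced to be $(1-\nu_C^{\mathrm{int}}(\mathbb R^n))\,\delta_{e^+(C)}$. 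Only then does uniqueness of the raywise excursion coupling make $\gamma_\#^\Gamma$ independent of $\gamma$.

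\textbf{Smaller omissions.} You need finite first moments of $\mu_C,\nu_C$ (by Tonelli) before invoking the one-dimensional theorems. In (c) you also need the affine upper bound that concavity gives, not just the lower bound. Otherwise both sides need not be finite, and you cannot subtract the raywise gaps in the strictness argument.
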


We prove Theorem~\ref{thm:6.1'} in the next section.

We will establish a regularity result similar to \cite[Theorem 6.2]{ambrosio2003existence} for the contact set of the linear-cost optimizers, and use it in conjunction with Theorem~\ref{thm:6.1'} to show uniqueness of the limit transport. (For a modern textbook treatment, including a detailed proof of the underlying countable-Lipschitz regularity of Kantorovich potentials, we refer the reader to \cite[Theorem~18.9]{maggi2023optimal}.)

The following is a standard consequence of Kantorovich duality for linear costs \citep[see, e.g.,][Theorem~3.17]{maggi2023optimal}.
\begin{lemma}[Contact set for the linear-cost optimizers]
    \label{lem:linear-cost-contact-set}
    Let $\mu,\nu$ be finite positive Borel measures on $\mathbb R^n$ with equal mass
    and finite first moments. Then there is a 1-Lipschitz function
    $u:\mathbb R^n\to\mathbb R$ such that, with
    \[
    \Gamma_u\coloneqq \{(x,y):\|x-y\|=u(x)-u(y)\},
    \]
    one has that the set of optimal plans for the Euclidean cost is given by
    \[
    \Pi_1(\mu,\nu)
    =
    \{\eta\in\Pi(\mu,\nu):\eta(\Gamma_u)=\mu(\mathbb R^n)\}.
    \]
    Moreover $\Gamma_u$ is $c$-cyclically monotone for $c(x,y)=\|x-y\|$.
\end{lemma}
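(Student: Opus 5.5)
We will prove Lemma~\ref{lem:linear-cost-contact-set} via Kantorovich duality for the cost $c(x,y)=\|x-y\|$, in its $1$-Lipschitz (Kantorovich--Rubinstein) form. Since $\mu,\nu$ have finite first moments and equal mass, the cost is integrable against every $\eta\in\Pi(\mu,\nu)$, because $\|x-y\|\le\|x\|+\|y\|$. Standard duality for a continuous nonnegative cost then gives
\[
\min_{\eta\in\Pi(\mu,\nu)}\int\|x-y\|\,d\eta=\max\Bigl\{\int u\,d\mu-\int u\,d\nu:\ u\ 1\text{-Lipschitz}\Bigr\}.
\]
Both the minimum and the maximum are attained. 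Finite mass (rather than probability) is handled by normalizing by $\mu(\mathbb R^n)>0$. Existence of a maximizing $u$ follows from Arzel\`a--Ascoli after fixing $u(0)=0$: the finite first moments make $\int u\,d(\mu-\nu)$ continuous along locally uniform limits, by dominated convergence with the bound $|u(x)|\le\|x\|$. We fix such a maximizing potential $u$.

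Next comes the characterization of $\Pi_1(\mu,\nu)$. For any $\eta\in\Pi(\mu,\nu)$, the $1$-Lipschitz property gives $\|x-y\|-(u(x)-u(y))\ge0$ pointwise. Integrating yields
\[
\int\|x-y\|\,d\eta-\Bigl(\int u\,d\mu-\int u\,d\nu\Bigr)=\int\bigl(\|x-y\|-u(x)+u(y)\bigr)\,d\eta\ge 0,
\]
where all terms are finite thanks to the first moments. By equality in duality, $\eta$ is optimal iff this integral vanishes. Since the integrand is nonnegative and continuous, this holds iff $\eta$ is concentrated on the closed set $\Gamma_u$, i.e.\ $\eta(\Gamma_u)=\mu(\mathbb R^n)$. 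This gives both inclusions.

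Finally, cyclical monotonicity of $\Gamma_u$ is elementary. For $(x_i,y_i)\in\Gamma_u$ and a permutation $\tau$,
\[
\sum_i\|x_i-y_i\|=\sum_i\bigl(u(x_i)-u(y_i)\bigr)=\sum_i\bigl(u(x_i)-u(y_{\tau(i)})\bigr)\le\sum_i\|x_i-y_{\tau(i)}\|.
\]
The middle equality is a reindexing of the sum over $y$, and the inequality is the Lipschitz bound.

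The only nonroutine point is the duality step. Specifically, one must check that the dual is attained by a genuinely $1$-Lipschitz $u$ rather than by a pair $(\varphi,\psi)$. The plan is to start from the $c$-concave pair given by \citep[Theorem~3.17]{maggi2023optimal} and note that for a metric cost $c$-concave functions are $1$-Lipschitz with $\psi=-\varphi$. If the cited statement is only for probability measures, rescaling settles the finite-mass case.
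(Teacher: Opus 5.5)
Your proposal is correct and follows the paper's proof essentially step for step: normalize by the common mass, use Kantorovich duality to get a $1$-Lipschitz potential $u$, characterize optimality by the vanishing of the nonnegative gap $\|x-y\|-u(x)+u(y)$, and get cyclical monotonicity by reindexing $\sum_i u(y_i)$ and applying the Lipschitz bound. Your Arzel\`a--Ascoli argument for attainment is extra detail that the paper absorbs into the cited duality theorem, and the zero-mass case you skip when normalizing is trivial, as the paper notes.
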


\begin{proof}[Proof of Lemma~\ref{lem:linear-cost-contact-set}]
    If $\mu$ and $\nu$ have mass zero, the statement is trivial. Otherwise,
    put $M\coloneqq \mu(\mathbb R^n)=\nu(\mathbb R^n)>0$ and
    $\bar\mu\coloneqq M^{-1}\mu$, $\bar\nu\coloneqq M^{-1}\nu$. The correspondence
    $\eta\mapsto\bar\eta\coloneqq M^{-1}\eta$ is a bijection from
    $\Pi(\mu,\nu)$ to $\Pi(\bar\mu,\bar\nu)$, and for every nonnegative
    Borel cost $c$,
    \[
        \int c\,d\eta=M\int c\,d\bar\eta .
    \]
    Kantorovich duality for the probability measures $\bar\mu,\bar\nu$ therefore
    gives a 1-Lipschitz function $u$ which, after multiplying the dual identity
    by $M$, satisfies
    \[
        \min_{\eta\in\Pi(\mu,\nu)}\int \|x-y\|\,d\eta
        =
        \int u\,d\mu-\int u\,d\nu .
    \]
    For every $\eta\in\Pi(\mu,\nu)$,
    \[
        \int \|x-y\|\,d\eta
        \ge
        \int (u(x)-u(y))\,d\eta
        =
        \int u\,d\mu-\int u\,d\nu \, ,
    \]
    using that $u$ is 1-Lipschitz.
    Thus $\eta$ is optimal if and only if the nonnegative gap
    $\|x-y\|-u(x)+u(y)$ has zero $\eta$-integral, equivalently
    $\eta(\Gamma_u)=\mu(\mathbb R^n)$.

    It remains to check cyclic monotonicity. If
    $(x_i,y_i)_{i=1}^N\subset\Gamma_u$ and $\tau$ is a permutation, then
    \[
        \sum_i \|x_i-y_i\|
        =
        \sum_i (u(x_i)-u(y_i))
        =
        \sum_i (u(x_i)-u(y_{\tau(i)}))
        \le
        \sum_i \|x_i-y_{\tau(i)}\|,
    \]
    where the last inequality uses that $u$ is 1-Lipschitz. Hence $\Gamma_u$ is
    $c$-cyclically monotone for $c(x,y)=\|x-y\|$.
\end{proof}

The following theorem establishes that the linear-cost contact set selected in
Lemma~\ref{lem:linear-cost-contact-set} has the regularity needed for
Theorem~\ref{thm:6.1'}.

\begin{theorem}[$\Gamma_u$ is sufficiently regular]
\label{thm:6.2'}
Let $\mu,\nu$ be mutually singular finite positive Borel measures on
$\mathbb{R}^n$ with equal positive total mass and finite first moments, and
assume $\mu\ll\mathcal{L}^n$. Let $u$ be the 1-Lipschitz Kantorovich potential
given by Lemma~\ref{lem:linear-cost-contact-set} for the pair $(\mu,\nu)$, and
let $\Gamma_u$ be the corresponding contact set.
If $\Gamma$ is any $\sigma$-compact diagonal-free subset of $\Gamma_u$,
then the no-crossing condition~\eqref{eq:18} holds, and furthermore:
\begin{enumerate}
    \item[(i)] For any maximal transport ray $S$, we have
    $u(x')-u(y')=\|x'-y'\|$ whenever $x',y'\in S$ and $x'\le y'$
    with respect to the orientation of the ray.
    \item[(ii)] The set $T_\Gamma^l\setminus T_\Gamma$ is
    $\mathcal{L}^n$-negligible.
    \item[(iii)] Condition (iii) in Theorem~\ref{thm:6.1'} holds.
\end{enumerate}
\end{theorem}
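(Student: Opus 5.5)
The approach is to derive everything from the 1-Lipschitz property of $u$ and the equality $u(x)-u(y)=\|x-y\|$ on $\Gamma_u$, following the scheme of \cite[Theorem~6.2]{ambrosio2003existence} and \cite[Theorem~18.9]{maggi2023optimal}.

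\textbf{Step 1: $u$ is affine on segments.} If $(x,y)\in\Gamma_u$, then for every $z\in[[x,y]]$ the triangle inequality together with the Lipschitz bound forces
\[
u(x)-u(z)=\|x-z\| \quad\text{and}\quad u(z)-u(y)=\|z-y\|.
\]
So $u$ decreases with unit slope along every transport segment.

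\textbf{Step 2: no-crossing.} Let $[[x,y]]$ and $[[x',y']]$ meet at a point $z$, with $\tau(x,y)\neq\tau(x',y')$, $x\ne x'$ and $y\ne y'$. Step 1 gives
\[
u(x)-u(y')=\|x-z\|+\|z-y'\|\ge\|x-y'\|.
\]
The Lipschitz bound forces equality, so $x,z,y'$ are collinear with $z$ between them. Symmetrically, $x',z,y$ are collinear. If $z$ is interior to one of the segments, these two collinearities force equal directions. If $z$ is an endpoint of both segments, the leftover cases are $z=x=y'$ or $z=y=x'$. In those cases $z$ would be the target of one pair and the source of another, i.e.\ $z$ lies in both $A$ and $A^c$. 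This is impossible once $\Gamma\subset A\times A^c$. I will therefore intersect $\Gamma$ with $A\times A^c$, which loses no $\gamma$-mass, or else handle these cases by continuing the segments.

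\textbf{Step 3: property (i).} Cover the ray $S$ by co-oriented collinear transport segments, which is property~(a) of maximal rays. By Step 1, on each such segment $u(a+tv)=c-t$. Overlapping open intervals share the constant $c$, and $S$ is connected, so $u$ has slope $-1$ along all of $S$. Continuity of $u$ extends this to the closure.

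\textbf{Step 4: property (ii).} The set $T^l_\Gamma\setminus T_\Gamma$ consists of starting points $x$ that lie in no open transport segment. I would show that at such points $u$ is not differentiable, or else two distinct rays emanate from $x$ (a branching point). Along any ray, $\nabla u$ equals $-\tau$ at differentiability points, since $u$ has slope $-1$ there and is 1-Lipschitz. So at a differentiability point the ray through $x$ is unique. The non-differentiability set is Lebesgue-null by Rademacher's theorem. This leaves the set of initial points at which $u$ is differentiable, and I plan to show that set is also null. For this I would use the standard argument of \cite{ambrosio2003existence}: for rational $\delta$, consider initial points of rays of length at least $\delta$. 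Their directions are Lipschitz in the same sense as in Step 5, and the map $(x,t)\mapsto x+t\tau(x)$ shows that the set of initial points sits on a Lipschitz graph-like family, so it has measure zero. Alternatively, I would invoke \cite[Theorem~18.9]{maggi2023optimal} directly on the larger closed contact set $\Gamma_u$ and pass to subsets, since $T^l_\Gamma\setminus T_\Gamma$ is contained in the union of the endpoint sets of the $\Gamma_u$-rays through points.

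\textbf{Step 5: property (iii), the main obstacle.} Set
\[
K_{h}=\{z\in T_\Gamma:\ \|z\|\le h,\ z\pm\tfrac1h\tau(z)\in \overline{\text{ray}}\}.
\]
On $K_h$ the key estimate is the Lipschitz bound on directions: if $z,z'$ are both at distance at least $1/h$ from their ray endpoints, the comparison argument of Step 2 applied to the points $z\pm\frac1h\tau$ yields $\|\tau(z)-\tau(z')\|\le C h\|z-z'\|$. These sets exhaust $T_\Gamma$, since every point of $T_\Gamma$ is interior to its ray. The difficulty is compactness: $T_\Gamma$ need not be closed, and the ray endpoints need not lie in $T^l$. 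I would handle this by inner regularity. The sets $K_h$ are Borel via Lemma~\ref{lem:borel-maximal-ray-map}. Since $\mu\ll\mathcal L^n$ is finite, I can choose compact subsets exhausting them up to a $\mu$-null set $N$, and take increasing finite unions. Restriction preserves the Lipschitz property, which gives (iii).
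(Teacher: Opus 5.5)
\textbf{Gap: Step~4, property (ii).} Neither of your two routes proves (ii).

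The first route claims that directions are Lipschitz on the set $E_\delta$ of initial points of rays of length at least $\delta$, ``in the same sense as in Step~5''. That is false. Your Step~5 comparison needs a piece of ray of length $1/h$ on \emph{both} sides of each point, and an initial point has nothing behind it. For example, take a radial fan, $u(x)=-\|x\|$, with initial points accumulating at the apex: the rays stay long, but the direction has unbounded Lipschitz constant on the initial points. So ``sits on a Lipschitz graph-like family'' is not an argument. The Rademacher step does not close the gap either. The dichotomy ``non-differentiable or branching'' is false, and differentiability points are exactly the case you leave open.

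The second route rests on a false inclusion. Shrinking $\Gamma$ shrinks $T_\Gamma$, so an initial point of a $\Gamma$-ray can be interior to a $\Gamma_u$-ray. Concretely, $u(x)=-x_1$ is a potential for $\mu,\nu$ uniform on $[0,1]^2$ and $[2,3]\times[0,1]$. Its $\Gamma_u$-rays are entire horizontal lines with no endpoints. Yet $\Gamma=\{(x,x+se_1):x\in[0,1]^2,\ 1\le s\le 2\}$ has $T^l_\Gamma\setminus T_\Gamma=\{0\}\times[0,1]$. A result for $\Gamma_u$ therefore does not pass to subsets. Property (ii) must be proved, or cited, for the given $\sigma$-compact $\Gamma$. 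The paper does the latter, invoking Ambrosio--Pratelli's proof of their Theorem~6.2(ii)--(iii) (and Maggi's Theorem~18.9) for arbitrary $\sigma$-compact $\Gamma\subset\Gamma_u$.

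\textbf{How to repair it.} Apply your own Step~5 estimate to translated copies of the initial-point set rather than to the set itself.
\begin{itemize}
\item Take a bounded $E\subset E_\delta$ consisting of differentiability points of $u$. Then $\tau(x)=-\nabla u(x)$ is single-valued, which is the genuine use of your Rademacher remark.
\item For $t\in[\delta/3,2\delta/3]$ set $E_t\coloneqq\{x+t\tau(x):x\in E\}$.
\item Since $x\notin T_\Gamma$, each $x\in E$ is the lower endpoint of its maximal ray. Hence $x\mapsto x+t\tau(x)$ is injective, and the $E_t$ are pairwise disjoint subsets of a fixed ball.
\item Each point of $E_t$ has at least $\delta/3$ of closed ray on both sides. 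By Step~5, the inverse $y\mapsto y-t\tau_\Gamma(y)$ is Lipschitz on $E_t$ with a constant $L$ independent of $t$.
\item Thus $\mathcal L^n(E)\le L^n\mathcal L^n(E_t)$ for infinitely many disjoint sets $E_t$ in a bounded region, which forces $\mathcal L^n(E)=0$.
\end{itemize}

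\textbf{The other steps.} Steps~1--3 are the paper's argument. In Step~2, drop the option of intersecting with $A\times A^c$: it would prove no-crossing for a different set, while the statement covers every $\sigma$-compact diagonal-free $\Gamma\subset\Gamma_u$. It is also unnecessary. Your displayed collinearity already handles $z=y=x'$, since $z$ then lies strictly between $x$ and $y'$, which forces equal directions; the case $z=x=y'$ is symmetric. Step~5, with the inner-regularity exhaustion, is a correct self-contained proof of (iii), where the paper only cites.
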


\begin{proof}
    We write the proof in our oriented, diagonal-free notation. Since
    $\Gamma\subset\Gamma_u$, every transport pair $(x,y)\in\Gamma$ satisfies
    $u(x)-u(y)=\|x-y\|$. Hence, if $z\in[[x,y]]$ and $x\le z\le y$ along the
    oriented segment, the 1-Lipschitz property gives
    \[
        u(x)-u(z)= \|x-z\|,\qquad
        u(z)-u(y)= \|z-y\|\, .
    \]

    We next prove the no-crossing condition. Let $(x,y),(x',y')\in\Gamma$,
    and suppose that $z\in[[x,y]]\cap[[x',y']]$. The contact equalities and
    the 1-Lipschitz property give the two-cycle inequality
    \[
    \begin{aligned}
        \|x-y\|+\|x'-y'\|
        &=u(x)-u(y)+u(x')-u(y')\\
        &=u(x)-u(y')+u(x')-u(y)\\
        &\le \|x-y'\|+\|x'-y\|.
    \end{aligned}
    \]
    On the other hand, the triangle inequalities through $z$ give
    \[
    \begin{aligned}
        \|x-y'\|+\|x'-y\|
        &\le \|x-z\|+\|z-y'\|+\|x'-z\|+\|z-y\|\\
        &=\|x-y\|+\|x'-y'\|.
    \end{aligned}
    \]
    Thus equality holds throughout, and in particular in both triangle
    inequalities through $z$. If $z$ is not an endpoint of both segments,
    their equality cases, together with the fact that $z$ lies on each
    segment, force the two segments to be collinear and co-oriented. If $z$
    is an endpoint of both, the only cases not already giving $x=x'$ or
    $y=y'$ are the two mixed-endpoint cases. If $y=x'$, then
    \[
        u(x)-u(y')
        =[u(x)-u(y)]+[u(x')-u(y')]
        =\|x-y\|+\|x'-y'\|,
    \]
    so the 1-Lipschitz and triangle inequalities force equality in the
    triangle inequality through $y=x'$, and the two orientations agree.
    The case $x=y'$ is analogous. Therefore, if the orientations differ, the
    segments can meet only with $x=x'$ or $y=y'$, which is
    condition~\eqref{eq:18}.

    The same equality argument proves (i) first on each transport ray. Since a
    maximal transport ray is a connected union of consistently oriented transport
    rays, and since the equalities are compatible on overlaps, the identity extends
    to all $x',y'\in S$ with $x'\le y'$:
    \[
        u(x')-u(y')=\|x'-y'\|.
    \]

    It remains to justify the two measure-theoretic regularity assertions.
    We invoke the following established regularity facts for transport rays contained
    in the contact set of a 1-Lipschitz Kantorovich potential. These facts are proved
    by Ambrosio and Pratelli in the proof of
    \cite[Theorem~6.2(ii)--(iii)]{ambrosio2003existence}; see also
    \cite[Theorem~18.9]{maggi2023optimal} for a modern treatment. For every
    $\sigma$-compact $\Gamma\subset\Gamma_u$, there exist an
    $\mathcal L^n$-negligible set $N\subset T_\Gamma$ and an increasing
    sequence of compact sets $K_h$ whose union is $T_\Gamma\setminus N$ such
    that $\tau_\Gamma|_{K_h}$ is Lipschitz for every $h$, and
    \[
        T_\Gamma^l\setminus (T_\Gamma\cup F_\Gamma)
    \]
    is $\mathcal L^n$-negligible, where
    \[
        F_\Gamma
        \coloneqq 
        \{x\in\mathbb R^n:(x,x)\in\Gamma
        \text{ and }(x,y)\notin\Gamma\text{ for every }y\ne x\}
    \]
    is the set of fixed points. In our application $\Gamma$ is
    diagonal-free, so $F_\Gamma=\emptyset$. Hence
    $T_\Gamma^l\setminus T_\Gamma$ is $\mathcal L^n$-negligible. Since
    $\mu\ll\mathcal L^n$, the compact-exhaustion statement implies condition
    (iii) in Theorem~\ref{thm:6.1'}.
\end{proof}

We will use the following simple lower-semicontinuity criterion several times.
It packages the standard portmanteau argument after shifting by a fixed
marginally integrable lower-control term.

\begin{lemma}[Lower semicontinuity under marginal lower control]
\label{lem:lsc-marginal-lower-control}
Let $\mu,\nu$ be finite positive Borel measures on $\mathbb R^n$ with equal
mass. Let $G:\mathbb R^n\to[0,\infty[$ be lower semicontinuous and assume
\[
    \int G\,d\mu+\int G\,d\nu<\infty.
\]
Let $h:\mathbb R^n\times\mathbb R^n\to\mathbb R$ be lower semicontinuous and
suppose that, for some $C\in[0,\infty[$,
\[
    h(x,y)\ge -C\bigl(1+G(x)+G(y)\bigr)
    \qquad\text{for all }x,y.
\]
Then
\[
    \gamma\mapsto \int h\,d\gamma
\]
is weakly lower semicontinuous on $\Pi(\mu,\nu)$.
\end{lemma}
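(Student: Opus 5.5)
Set $H(x,y)\coloneqq h(x,y)+C\bigl(1+G(x)+G(y)\bigr)$. Then $H$ is lower semicontinuous, since $h$ and $G$ are and the sum of lower semicontinuous functions is lower semicontinuous. It is also nonnegative by the assumed lower bound. For every $\gamma\in\Pi(\mu,\nu)$ the marginal constraints give
\[
    \int C\bigl(1+G(x)+G(y)\bigr)\,d\gamma
    = C\Bigl(\mu(\mathbb R^n)+\int G\,d\mu+\int G\,d\nu\Bigr)\eqqcolon K<\infty .
\]
This constant $K$ does not depend on $\gamma$. So $\int h\,d\gamma=\int H\,d\gamma-K$ is well defined in $]-\infty,+\infty]$, since the negative part of $h$ is $\gamma$-integrable. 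Lower semicontinuity of $\gamma\mapsto\int h\,d\gamma$ therefore reduces to lower semicontinuity of $\gamma\mapsto\int H\,d\gamma$ on $\Pi(\mu,\nu)$.

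For the reduced claim I use the standard portmanteau argument for nonnegative lower semicontinuous integrands. On the metric space $\mathbb R^n\times\mathbb R^n$, write $H=\sup_k H_k$ with an increasing sequence of bounded Lipschitz functions $0\le H_k\le H$. Take for instance the truncated inf-convolutions $H_k(p)\coloneqq\min\{k,\inf_q (H(q)+k\|p-q\|)\}$. Let $\gamma_j\to\gamma$ weakly, meaning against bounded continuous functions; all these measures have the same finite mass $\mu(\mathbb R^n)$. Then for each $k$,
\[
    \liminf_{j}\int H\,d\gamma_j\ \ge\ \lim_j\int H_k\,d\gamma_j=\int H_k\,d\gamma .
\]
Letting $k\to\infty$ and applying monotone convergence gives $\liminf_j\int H\,d\gamma_j\ge\int H\,d\gamma$. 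Subtracting the constant $K$ yields the claim.

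The one point needing care is that $K$ is genuinely independent of $\gamma$. This is exactly where the constraint to $\Pi(\mu,\nu)$ with fixed marginals and the integrability of $G$ enter. Without it, the lower control term could lose mass at infinity along the sequence and the shift would fail. Since $\Pi(\mu,\nu)$ is metrizable for the weak topology (the measures are finite on a Polish space), sequential lower semicontinuity suffices. The rest is routine.
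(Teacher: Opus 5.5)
Your proof is correct and follows the paper's argument. Like the paper, you shift $h$ by the fixed-marginal term $C(1+G(x)+G(y))$ to obtain a nonnegative lower semicontinuous integrand, apply portmanteau lower semicontinuity, and subtract the $\gamma$-independent constant; the only difference is that you prove the portmanteau step explicitly via increasing bounded Lipschitz inf-convolution approximants, where the paper simply cites it.
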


\begin{proof}
Set
\[
    \widehat h(x,y)\coloneqq h(x,y)+C\bigl(1+G(x)+G(y)\bigr).
\]
Then $\widehat h$ is nonnegative and lower semicontinuous. Hence, by the
portmanteau theorem, $\gamma\mapsto\int\widehat h\,d\gamma$ is weakly lower
semicontinuous. The added term has the same finite integral for every
$\gamma\in\Pi(\mu,\nu)$, namely
\[
    C\left[\mu(\mathbb R^n)+\int G\,d\mu+\int G\,d\nu\right].
\]
Subtracting this constant gives the claim.
\end{proof}

Next, we show that the limit transport plan for concave optimizers must solve a secondary variational problem.

\begin{prop}[Limit of Concave Optimizers]
\label{prop:7.1'}
Assume {that} $\mu,\nu$ are mutually singular finite positive Borel measures with equal positive total mass and finite first moments, and $\mu\ll\mathcal{L}^n$.
Let $(\phi_\epsilon: [0, \infty[ \to [0, \infty[)_{\epsilon\in]0,1[}$ satisfy
Assumption~\ref{ass:concave-perturbation-family}, and let $\phi(\cdot)$ denote the
first-order profile defined there. Let $t_\epsilon$ be the optimal map between $\mu$ and $\nu$ for the cost
$c_\epsilon(x,y) = \phi_\epsilon(\|x-y\|)$.
Let $\gamma_\epsilon = (\mathrm{Id},t_\epsilon)_{\#}\mu$. Then any weak limit point $\gamma_0$ of the family $(\gamma_\epsilon)$ as $\epsilon \to 0^+$ is a solution to the secondary variational problem
\begin{equation*}
    (SP) \quad \min \left\{ \int_{\mathbb{R}^n \times \mathbb{R}^n} \phi(\|x-y\|) \, d\gamma : \gamma \in \Pi_1(\mu, \nu) \right\}\, ,
\end{equation*}
and the minimum is finite.
\end{prop}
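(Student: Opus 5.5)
This is the $\Gamma$-convergence argument of Ambrosio and Pratelli, run with the concave comparisons in place of the convex ones. I would carry out four steps in order.

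\emph{Step 1: the limit point is $L^1$-optimal.} The family $(\gamma_\epsilon)$ is tight, since its marginals are fixed. So weak limit points exist along some sequence $\epsilon_k\to0^+$. Fix any $\gamma\in\Pi(\mu,\nu)$. By optimality, $\int\phi_\epsilon(\|x-y\|)\,d\gamma_\epsilon\le\int\phi_\epsilon(\|x-y\|)\,d\gamma$. Convexity of $\epsilon\mapsto\phi_\epsilon(d)$ together with $\phi_\epsilon(d)\to d$ gives the two-sided control
\[
d+\epsilon\,\phi(d)\;\le\;\phi_\epsilon(d)\;\le\;(1-\epsilon)\,d+\epsilon\,\phi_1(d).
\]
The upper bound comes from convexity on $[0,1]$, extending to $\epsilon=0$ by continuity. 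The lower bound holds because the right-derivative at $0$ lies below chords. Hence $\phi_\epsilon(d)\to d$ with $|\phi_\epsilon(d)-d|\le\epsilon\bigl(C(1+d)+2(1+d)\bigr)$, using Assumption~\ref{ass:concave-perturbation-family} parts 2 and 3. Finite first moments make all these integrals finite. Passing to the limit with Lemma~\ref{lem:lsc-marginal-lower-control} (take $G=\|\cdot\|$ and $h=\|x-y\|$) gives $\int\|x-y\|\,d\gamma_0\le\int\|x-y\|\,d\gamma$. Therefore $\gamma_0\in\Pi_1(\mu,\nu)$.

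\emph{Step 2: the first-order inequality.} Fix $\gamma\in\Pi_1(\mu,\nu)$ and let $W=\min(L^1)$. Since $\gamma_\epsilon$ is admissible for $(L^1)$, we have $\int\|x-y\|\,d\gamma_\epsilon\ge W=\int\|x-y\|\,d\gamma$. The lower bound above then gives
\[
W+\epsilon\int\phi\,d\gamma_\epsilon\le\int\phi_\epsilon\,d\gamma_\epsilon\le\int\phi_\epsilon\,d\gamma .
\]
Subtracting $W$ and dividing by $\epsilon$ yields
\[
\int\phi(\|x-y\|)\,d\gamma_\epsilon\;\le\;\int\frac{\phi_\epsilon(\|x-y\|)-\|x-y\|}{\epsilon}\,d\gamma .
\]
The difference quotient $(\phi_\epsilon(d)-d)/\epsilon$ is nondecreasing in $\epsilon$ by convexity and converges to $\phi(d)$. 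For $\epsilon\le 1/2$ it is bounded above by $\phi_1(d)-d\le 2(1+d)$ and below by $\phi(d)\ge -C(1+d)$. Dominated convergence therefore gives the limit $\int\phi\,d\gamma$.

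\emph{Step 3: lower semicontinuity along the sequence.} I would show $\int\phi\,d\gamma_0\le\liminf_k\int\phi\,d\gamma_{\epsilon_k}$. The profile $\phi$ is concave and finite on $[0,\infty[$, so it is continuous on $]0,\infty[$ and upper semicontinuous at $0$. However, it could jump up at $0$, which would break lower semicontinuity. This is the step I expect to be the main obstacle. The remedy is to replace $\phi(0)$ by $\lim_{d\to0^+}\phi(d)$, which exists by monotonicity of concave slopes. This does not change any of the integrals, because every plan in $\Pi(\mu,\nu)$ is concentrated off the diagonal by mutual singularity. After this modification $h(x,y)=\phi(\|x-y\|)$ is continuous. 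It also satisfies $h\ge -C(1+\|x\|+\|y\|)$. Lemma~\ref{lem:lsc-marginal-lower-control} with $G=\|\cdot\|$ then applies.

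\emph{Step 4: conclusion.} Combining Steps 2 and 3 gives $\int\phi\,d\gamma_0\le\int\phi\,d\gamma$ for every $\gamma\in\Pi_1(\mu,\nu)$. Together with Step 1, $\gamma_0$ solves (SP). The minimum is finite because $\phi$ is bounded between $-C(1+d)$ and $2(1+d)$, so its integral is finite for any plan with finite first-moment marginals.
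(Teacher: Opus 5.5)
Your argument is correct and is essentially the paper's. The paper proves $\Gamma$-convergence of $F_\epsilon$ to $F$ and of $(F_\epsilon-m)/\epsilon$ to the functional $\int\phi\,d\gamma$ on $\Pi_1(\mu,\nu)$ (and $+\infty$ elsewhere), and then invokes the fundamental theorem of $\Gamma$-convergence. You inline that theorem by comparing $\gamma_\epsilon$ with a fixed competitor, using the same ingredients: the chord bound $\phi_\epsilon(d)\ge d+\epsilon\phi(d)$, monotone difference quotients for domination, and Lemma~\ref{lem:lsc-marginal-lower-control}.

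Two small corrections. First, a finite concave $\phi$ on $[0,\infty[$ always satisfies $\phi(0)\le\lim_{d\to0^+}\phi(d)$, so it is automatically lower (not upper) semicontinuous at $0$. This is what the paper uses directly, and the ``obstacle'' in your Step~3 never arises; your redefinition of $\phi(0)$ is harmless but unnecessary. Second, $\phi_1$ is not part of a family indexed by $]0,1[$. Bound the difference quotient by its value at $\epsilon=1/2$, as the paper does, or let $\epsilon'\to1^-$ in the chord inequality against $\phi_{\epsilon'}\le2(1+d)$.
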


\begin{proof}
Let $d(x,y) \coloneqq \|x-y\|$. For $\gamma \in \Pi(\mu, \nu)$, define the functionals
\begin{align*}
    F_\epsilon(\gamma) &\coloneqq  \int_{\mathbb{R}^n \times \mathbb{R}^n} c_\epsilon(x,y) \, d\gamma \\
    F(\gamma) &\coloneqq  \int_{\mathbb{R}^n \times \mathbb{R}^n}\|x-y\| \, d\gamma
\end{align*}
Since $\mu$ and $\nu$ have finite first moments, $\int \|x\| d\mu < \infty$ and $\int \|y\| d\nu < \infty$, which implies that for any $\gamma \in \Pi(\mu, \nu)$, $F(\gamma) = \int \|x-y\| d\gamma < \infty$ and $F_\epsilon(\gamma) < \infty$ for any $\epsilon \in ]0,1[$ using the assumption that $\phi_\epsilon(d) \leq 2(1+d)$. %
This ensures the problems are well-defined. %

First, we show that $F_\epsilon$ $\Gamma$-converges to $F$. Observe that the function $c_\epsilon(x,y) \to \|x-y\|$ pointwise as $\epsilon \to 0$, since we assumed that $\lim_{\epsilon \to 0^+} \phi_\epsilon(d) = d$.

{To show that $F_\epsilon$ $\Gamma$-converges to $F$, we need to show:
\begin{enumerate}
    \item For any $\gamma_0 \in \Pi(\mu, \nu)$ and any sequence $\gamma_\epsilon \to \gamma_0$ weakly as $\epsilon \to 0^+$  we have that $\lim \inf_{\epsilon\to 0^+} F_\epsilon(\gamma_\epsilon) \ge F(\gamma_0)$.
    \item For any $\gamma_0$, there exists a sequence $\gamma_\epsilon \to \gamma_0$ such that  $\limsup_{\epsilon\to 0^+}  F_\epsilon(\gamma_\epsilon) \le F(\gamma_0)$.
\end{enumerate}
}

We first show the liminf inequality. Let $\gamma_\epsilon \to \gamma_0$ weakly as $\epsilon \to 0^+$. We must show that $\liminf_{\epsilon\to 0^+} F_\epsilon(\gamma_\epsilon) \ge F(\gamma_0)$. The functional $F(\gamma) = \int \|x-y\| d\gamma$ is weakly lower semi-continuous, as the cost function $c(x,y) = \|x-y\|$ is non-negative and continuous. By assumption, the function $\phi$ has lower bound $\phi(d) \ge -C (1+d)$ for all $d \ge 0$. This provides a lower bound on the perturbed cost using that $\phi_\epsilon(d)$ is convex in $\epsilon$:
\[
c_\epsilon(x,y) \geq  \|x-y\| + \epsilon\phi(\|x-y\|) \ge \|x-y\| - \epsilon C(1+\|x-y\|)\, .
\]
Integrating this inequality with respect to $\gamma_\epsilon$ gives:
\begin{align}
F_\epsilon(\gamma_\epsilon)
= \int c_\epsilon(x,y) \,d\gamma_\epsilon \ge \int \|x-y\| \,d\gamma_\epsilon - \epsilon C\int (1+\|x-y\|)  d\gamma_\epsilon \, .
\label{ineq:F_epsilon_lower_bound}
\end{align}
Now,
\begin{align}
    \int (1+\|x-y\|) \,d\gamma_\epsilon \le \mu(\mathbb{R}^n) + \int \|x\| d\mu + \int \|y\| d\nu \leq B \, ,
\label{eq:1plusd-int-upper-bound}
\end{align}
for some constant $B<\infty$ since we know that $\mu$ and $\nu$ have finite first moments.

Taking the limit inferior of both sides of \eqref{ineq:F_epsilon_lower_bound} as $\epsilon \to 0^+$ and applying the weak lower semi-continuity of $F$, we get:
\[
\liminf_{\epsilon\to 0^+} F_\epsilon(\gamma_\epsilon) \ge \liminf_{\epsilon\to 0^+} \left( \int \|x-y\| \,d\gamma_\epsilon \right) - \lim_{\epsilon\to 0^+}\epsilon CB \ge F(\gamma_0) - 0.
\]
This establishes the required liminf inequality.

For the second part, the limsup inequality, choose the constant sequence $\gamma_\epsilon = \gamma_0$. Then $F_\epsilon(\gamma_0) \to F(\gamma_0)$ by the dominated convergence theorem, since $\phi_\epsilon(d) \in [0, 2(1+d)]$ and $\int (1+d) d\gamma_0 < \infty$ by the finite first moments assumption. %
{ Therefore, $F_\epsilon$ $\Gamma$-converges to $F$.}

\medskip
The fixed-marginal class $\Pi(\mu,\nu)$ is weakly compact: it is tight
because its marginals are fixed and it is weakly closed because the marginal
maps are continuous. Since $F$ is lower semicontinuous, it attains its
minimum; write
\[
    m\coloneqq \min_{\gamma\in\Pi(\mu,\nu)}F(\gamma).
\]
Next, we want to show that the rescaled functionals $F'_\epsilon(\gamma) \coloneqq  (F_\epsilon(\gamma) - m)/\epsilon$ $\Gamma$-converge to
\[ F'(\gamma) \coloneqq 
   \begin{cases}
        \int \phi(\|x-y\|) \, d\gamma & \text{if } \gamma \in \Pi_1(\mu, \nu) \\
        +\infty & \text{otherwise.}
   \end{cases}
\]
For all $\gamma \in \Pi_1(\mu, \nu)$, we know that $\int \phi(\|x-y\|) \, d\gamma$ is finite due to finite first moments of our measures, and $\phi(d) \in [-C(1+d), 4(d+1)]$. Here, $\phi(d) \leq 4(d+1)$ holds due to the following argument:  Given that the map \(\epsilon \mapsto \phi_\epsilon(d)\) is convex and \(\phi_0(d) = d\), the function $\epsilon \mapsto (\phi_\epsilon(d) - d)/\epsilon$ is non-decreasing. This property, combined with the sublinear growth assumption \(\phi_\epsilon(d) \le 2(1+d)\), provides the desired upper bound
$\phi(d) \le \frac{\phi_{1/2}(d) - d}{1/2} \le 2 \big[2(1+d) - d\big] \leq 4(d+1)$.

{We now proceed to show 1 and 2 for $F'_\epsilon$ and $F'$.}

\textbf{Liminf inequality.} Let $\gamma_\epsilon \to \gamma_0$ weakly. We must show $\liminf_{\epsilon \to 0^+} F'_\epsilon(\gamma_\epsilon) \ge F'(\gamma_0)$.
The lower bound $\phi(d)\ge-C(1+d)$ and
\eqref{eq:1plusd-int-upper-bound} give, for every
$\eta\in\Pi(\mu,\nu)$,
\[
    F'_\epsilon(\eta)
    \ge
    \frac{F(\eta)-m}{\epsilon}
    +\int\phi(d)\,d\eta
    \ge -CB.
\]
Thus the liminf cannot be $-\infty$. If it is $+\infty$ there is nothing to
prove. Otherwise choose a sequence $\epsilon_j\to0^+$ realizing this
finite liminf. Along a further tail,
$F'_{\epsilon_j}(\gamma_{\epsilon_j})$ is bounded above, and hence
\[
    F_{\epsilon_j}(\gamma_{\epsilon_j})
    =
    m+\epsilon_jF'_{\epsilon_j}(\gamma_{\epsilon_j})
    \longrightarrow m.
\]
The first Gamma-liminf inequality now yields
$F(\gamma_0)\le m$. The definition of $m$ gives the reverse inequality, so
$F(\gamma_0)=m$, that is, $\gamma_0\in\Pi_1(\mu,\nu)$.

The convexity of the map $\epsilon \mapsto \phi_\epsilon(d)$ gives the inequality $\phi_\epsilon (d) \ge d + \epsilon \phi(d)$.
Integrating over $\gamma_\epsilon$ and subtracting $m$ gives
\[
    F'_\epsilon(\gamma_\epsilon)
    \ge
    \int \phi(\|x-y\|)\,d\gamma_\epsilon
    +
    \frac{F(\gamma_\epsilon)-m}{\epsilon}.
\]
Since $F(\gamma_\epsilon)\ge m$, it follows that
\[
    F'_\epsilon(\gamma_\epsilon)
    \ge
    \int \phi(\|x-y\|)\,d\gamma_\epsilon .
\]

Since $\phi$ is finite-valued and concave on $[0,\infty[$, and $(x,y)\mapsto \|x-y\|$ is continuous, the function $h(x,y)=\phi(\|x-y\|)$ is lower semicontinuous. By Lemma~\ref{lem:lsc-marginal-lower-control}, applied with
$G(z)=\|z\|$ and this $h(x,y)$, the functional
\[
    \gamma\mapsto \int \phi(\|x-y\|)\,d\gamma
\]
is weakly lower semicontinuous on $\Pi(\mu,\nu)$.
Hence
\[
    \int \phi(\|x-y\|)\,d\gamma_0
    \le
    \liminf_{\epsilon\to0^+}
    \int \phi(\|x-y\|)\,d\gamma_\epsilon \leq \liminf_{\epsilon\to0^+} F'_\epsilon(\gamma_\epsilon)\, .
\]
Since we have already shown $\gamma_0\in\Pi_1(\mu,\nu)$, the left-hand side is
$F'(\gamma_0)$. Therefore
\[
    \liminf_{\epsilon\to0^+}F'_\epsilon(\gamma_\epsilon)
    \ge
    F'(\gamma_0)\, .
\]

\textbf{Limsup inequality.} Without loss of generality, assume that
$\gamma_0\in\Pi_1(\mu,\nu)$. We choose the constant recovery sequence
$\gamma_\epsilon=\gamma_0$. Since $F(\gamma_0)=m$,
\[
    F'_\epsilon(\gamma_0)
    =
    \int \frac{\phi_\epsilon(\|x-y\|)-\|x-y\|}{\epsilon}\,d\gamma_0.
\]
By definition of the first-order profile, the integrand converges pointwise to
$\phi(\|x-y\|)$. We now show that this integrand is dominated by a function that is integrable with respect to $\gamma_0$.

Fix $\epsilon_0\in]0,1[$. By convexity of
$\epsilon\mapsto \phi_\epsilon(d)$ and $\phi_\epsilon(d)\to d$, the secant
quotients
$
   q_\epsilon(d) \coloneq \frac{\phi_\epsilon(d)-d}{\epsilon}
$
are nondecreasing in $\epsilon$. Therefore, for $0<\epsilon\le\epsilon_0$, we have
\[
    \phi(d)\le q_\epsilon(d)\le q_{\epsilon_0}(d).
\]
But by assumption,
$
    \phi(d)\ge -C(1+d),
$ and
$
    q_{\epsilon_0}(d)
    =
    \frac{\phi_{\epsilon_0}(d)-d}{\epsilon_0}
    \le
    \frac{2(1+d)}{\epsilon_0}
$.
Thus $|q_\epsilon(d)|\le \max\{C, 2/\epsilon_0\} (1+d)$ for $0<\epsilon\le\epsilon_0$.
Since $d(x,y)\le \|x\|+\|y\|$ and $\gamma_0$ has fixed marginals with finite
first moments, this dominating function is $\gamma_0$-integrable.

Dominated convergence then yields
\[
    \lim_{\epsilon\to0^+}F'_\epsilon(\gamma_0)
    =
    \int \phi(d(x,y))\,d\gamma_0
    =
    F'(\gamma_0) \, .
\]

\smallskip
For completeness, the hypotheses of the fundamental theorem of
Gamma-convergence are automatic here. All functionals are considered on the
compact space $\Pi(\mu,\nu)$, so the family is equicoercive. Moreover,
\[
    \operatorname*{argmin}_{\Pi(\mu,\nu)}F'_\epsilon
    =
    \operatorname*{argmin}_{\Pi(\mu,\nu)}F_\epsilon,
\]
because $F'_\epsilon=(F_\epsilon-m)/\epsilon$ with $\epsilon>0$. Therefore
\cite[Theorem~4.1]{ambrosio2003existence} implies that every weak limit point
of the minimizing plans $\gamma_\epsilon$ minimizes $F'$. The recovery
sequence above also shows that the minimum of $F'$ is finite.
\end{proof}

Finally, we show that the canonical fixed-support construction furnished
by Theorem~\ref{thm:6.1'} is the same for every admissible strictly concave
secondary profile, thereby defining an intrinsic generalized EC map.

\begin{theorem}[Unique minimizer of the secondary problem]
\label{thm:7.2'}
Let $\mu,\nu$ be mutually singular finite positive Borel measures on $\mathbb{R}^n$ with equal positive total mass and finite first moments, and assume $\mu\ll\mathcal{L}^n$.
Consider the secondary variational problem
\begin{equation*}
    (SP) \quad \min \left\{ \int_{\mathbb{R}^n \times \mathbb{R}^n} \psi(\|x-y\|) \, d\gamma : \gamma \in \Pi_1(\mu, \nu) \right\},
\end{equation*}
where $\psi:[0,\infty[\to\mathbb R$ is strictly concave and finite-valued and satisfies
$\psi(d)\ge -K_\psi(1+d)$ for all $d\ge0$ and some
$K_\psi\in[0,\infty[$.
The minimum value is finite, and (SP) has a unique minimizer
$\gamma_\#$ induced by a transport map $t_\#$. On almost every maximal
transport ray, this map is the one-dimensional excursion-coupling map.
Moreover, $\gamma_\#$, and hence $t_\#$ up to $\mu$-a.e. equality, is the same
for every choice of $\psi$ satisfying these assumptions. We call $t_\#$ the
generalized EC map associated with $(\mu,\nu)$.
\end{theorem}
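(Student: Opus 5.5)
The plan is to combine Lemma~\ref{lem:linear-cost-contact-set}, Theorem~\ref{thm:6.2'} and Theorem~\ref{thm:6.1'}, working on one fixed contact set. First, fix the 1-Lipschitz potential $u$ and the contact set $\Gamma_u$ given by Lemma~\ref{lem:linear-cost-contact-set}. The set $\Gamma_u$ is closed, since $u$ is continuous. Choose a Borel set $A$ with $\mu(A^c)=0$ and $\nu(A)=0$. Then every $\eta\in\Pi_1(\mu,\nu)$ is concentrated on $\Gamma_u\cap(A\times A^c)$.

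This set is not $\sigma$-compact, so I would use inner regularity of $\eta$ to pick a $\sigma$-compact subset carrying $\eta$. The difficulty is that this set would depend on $\eta$. To get a single $\Gamma$ serving all plans, I would instead take closed $F\subset A$ and $F'\subset A^c$ with $\mu(A\setminus F)=0$ and $\nu(A^c\setminus F')=0$. These exist as countable unions of compacts, again by inner regularity of the finite measures $\mu$ and $\nu$. Setting
\[
\Gamma\coloneqq\Gamma_u\cap(F\times F'),
\]
this set is $\sigma$-compact, diagonal-free, and independent of $\eta$. Every $\eta\in\Pi_1(\mu,\nu)$ is concentrated on $\Gamma$, since $\eta\bigl((F\times F')^c\bigr)\le\mu(F^c)+\nu(F'^c)=0$. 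Theorem~\ref{thm:6.2'} then gives the no-crossing condition~\eqref{eq:18} and hypotheses (ii)--(iii) of Theorem~\ref{thm:6.1'}. Hypothesis (ii) follows because $\mathcal L^n$-negligible sets are $\mu$-negligible.

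Next, apply Theorem~\ref{thm:6.1'} to any $\gamma\in\Pi_1(\mu,\nu)$; the set is nonempty by compactness. This produces the canonical plan $\gamma_\#=\gamma_\#^\Gamma$, which depends only on $\mu,\nu,\Gamma$. By (a), $\gamma_\#$ is induced by a map $t_\#$ and lies in $\Pi_1(\mu,\nu)$, because $\Gamma\subset\Gamma_u$ is $c$-cyclically monotone. By (b), $t_\#$ agrees with the one-dimensional excursion-coupling map on $\sigma$-a.e. ray. By (c), applied with $\phi=\psi$, we get $\int\psi\,d\gamma_\#\le\int\psi\,d\eta$ for every $\eta\in\Pi_1(\mu,\nu)$, with strict inequality unless $\eta=\gamma_\#$. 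Hence $\gamma_\#$ is the unique minimizer. The minimum is finite because $-K_\psi(1+d)\le\psi(d)$. For the upper bound, concavity gives $\psi(d)\le\psi(0)+\psi'_+(0)\,d$ whenever $\psi'_+(0)$ is finite, and finite first moments then make $\int\psi\,d\gamma_\#$ finite. If $\psi'_+(0)=+\infty$, I would use the tangent line at $d=1$ instead, which controls $\psi$ from above on all of $[0,\infty[$ by concavity.

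Independence of $\psi$ is then immediate. $\Gamma$ was chosen without reference to $\psi$, and $\gamma_\#^\Gamma$ does not depend on $\psi$, so the minimizer is the same plan for every admissible $\psi$; consequently $t_\#$ is determined $\mu$-a.e. A residual concern is that the construction should not depend on the choice of $u$, $F$, or $F'$. This is automatic: for a fixed $\psi$, any admissible choice yields the unique minimizer of (SP), and that minimizer is intrinsic to $(\mu,\nu,\psi)$.

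The main obstacle I expect is the step of choosing one fixed $\sigma$-compact diagonal-free $\Gamma$ that carries every $L^1$-optimal plan simultaneously. This is needed because the strictness clause of Theorem~\ref{thm:6.1'}(c) compares only plans concentrated on a common $\Gamma$. The product-of-$\sigma$-compact-marginal-sets trick above is what I would try first. I would also check that the phrase ``maximal transport ray'' in the conclusion refers to rays of this $\Gamma$, or equivalently, by Theorem~\ref{thm:6.2'}(i), to rays of $u$.
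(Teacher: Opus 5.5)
Your proposal is correct and follows the same overall plan as the paper: fix $u$, take one $\sigma$-compact diagonal-free subset of $\Gamma_u$ that carries every plan in $\Pi_1(\mu,\nu)$, invoke Theorems~\ref{thm:6.2'} and~\ref{thm:6.1'}, and get independence of $\psi$ from the canonicity of $\gamma_\#^\Gamma$. There are two differences.

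\textbf{The common carrier.} The step you flag as the main obstacle is easier than you expect. The paper takes $\Gamma^*=\Gamma_u\setminus\Delta$ directly. It is $\sigma$-compact, since $\Gamma^*=\bigcup_{m,k\ge1}\bigl(\Gamma_u\cap\{\|(x,y)\|\le m,\ \|x-y\|\ge 1/k\}\bigr)$ with $\Gamma_u$ closed. It carries every $\eta\in\Pi_1(\mu,\nu)$, because mutual singularity gives $\eta(\Delta)=0$. Your separator construction also works, with one correction: ``closed $F\subset A$'' must read ``$\sigma$-compact $F\subset A$''. A closed subset of $A$ of full $\mu$-measure need not exist, and only $\sigma$-compactness is used afterwards. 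The paper's choice has a further advantage: the maximal rays in the conclusion are those of the whole off-diagonal contact set. The rays of your $\Gamma_u\cap(F\times F')$ can be shorter and depend on $F,F'$. By uniqueness this does not affect the plan, but your ``equivalently, rays of $u$'' is not automatic.

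\textbf{Existence.} You get existence, minimality and uniqueness at once by feeding every $\eta\in\Pi_1(\mu,\nu)$ into Theorem~\ref{thm:6.1'}(c), using that $\gamma_\#^\Gamma$ does not depend on the input plan. The paper first proves existence separately by the direct method (weak compactness plus Lemma~\ref{lem:lsc-marginal-lower-control}). It then compares an arbitrary minimizer with $\gamma_\#^{\Gamma^*}$. Your route makes the direct-method step unnecessary and needs finiteness only at $\gamma_\#$. The paper's route establishes existence independently of the construction theorem.
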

\begin{proof}

Existence follows by the direct method. Indeed, $\Pi_1(\mu,\nu)$ is weakly
compact. Since $\psi$ is finite-valued and concave on $[0,\infty[$, the
function
\[
    h(x,y)\coloneqq \psi(\|x-y\|)
\]
is lower semicontinuous. Moreover,
\[
    h(x,y)\ge -K_\psi(1+\|x-y\|)
    \ge -K_\psi(1+\|x\|+\|y\|).
\]
Thus Lemma~\ref{lem:lsc-marginal-lower-control}, applied with
$G(z)=\|z\|$ and this $h$, shows that the secondary functional is weakly
lower semicontinuous on $\Pi(\mu,\nu)$, hence on $\Pi_1(\mu,\nu)$.
Also, since $\psi$ is finite-valued and concave on $[0,\infty[$, it admits an
affine upper bound in $d$. Together with
$\psi(d)\ge -K_\psi(1+d)$ and the finite first moments of $\mu,\nu$, this shows
that $\int \psi(\|x-y\|)\,d\gamma$ is finite for every
$\gamma\in\Pi_1(\mu,\nu)$.
The secondary functional therefore attains its minimum on $\Pi_1(\mu,\nu)$.

Fix a 1-Lipschitz Kantorovich potential $u$ given by
Lemma~\ref{lem:linear-cost-contact-set}, and let $\Gamma_u$ be its contact set.
Writing
\[
    \Delta\coloneqq \{(x,x):x\in\mathbb R^n\},
    \qquad
    \Gamma^*\coloneqq \Gamma_u\setminus\Delta,
\]
we claim that $\Gamma^*$ is $\sigma$-compact. Indeed, since $u$ is continuous,
$\Gamma_u$ is closed. For $m,k\ge1$, let
\[
    K_{m,k}
    \coloneqq 
    \Gamma_u
    \cap\{(x,y):\|(x,y)\|\le m\}
    \cap\{(x,y):\|x-y\|\ge 1/k\}.
\]
Each $K_{m,k}$ is compact, and
\[
    \Gamma^*=\bigcup_{m,k\ge1}K_{m,k}.
\]
Thus $\Gamma^*$ is a fixed $\sigma$-compact diagonal-free subset of
$\Gamma_u$. By Theorem~\ref{thm:6.2'}, it satisfies the no-crossing condition
and conditions (ii)--(iii) of Theorem~\ref{thm:6.1'}; condition (i) follows
from the assumption $\mu\ll\mathcal L^n$.

Let $\gamma$ be any minimizer of (SP). Since
$\gamma\in\Pi_1(\mu,\nu)$, Lemma~\ref{lem:linear-cost-contact-set} gives
\[
    \gamma(\Gamma_u)=\mu(\mathbb R^n).
\]
Moreover, mutual singularity implies that $\gamma$ gives no mass to the
diagonal. Indeed, choose a Borel set $A$ such that
$\mu(A^c)=0$ and $\nu(A)=0$. Then
\[
    \gamma(\Delta)
    \le
    \gamma(A^c\times\mathbb R^n)
    +
    \gamma(\mathbb R^n\times A)
    =
    \mu(A^c)+\nu(A)
    =
    0.
\]
Consequently, $\gamma$ is concentrated on $\Gamma^*$.

Applying Theorem~\ref{thm:6.1'} to $\gamma$, with the fixed supporting set
$\Gamma^*$ and with $\phi=\psi$, produces the plan
$\gamma_\#^{\Gamma^*}$, canonical for this fixed $\Gamma^*$. Since
$\Gamma^*\subset\Gamma_u$ and $\Gamma_u$ is $c$-cyclically monotone by
Lemma~\ref{lem:linear-cost-contact-set}, $\Gamma^*$ is also
$c$-cyclically monotone. Conclusion (a) of Theorem~\ref{thm:6.1'} therefore
gives
\[
    \gamma_\#^{\Gamma^*}\in\Pi_1(\mu,\nu).
\]
Conclusion (c) gives
\[
    \int \psi(\|x-y\|)\,d\gamma_\#^{\Gamma^*}
    \le
    \int \psi(\|x-y\|)\,d\gamma.
\]
Because $\gamma$ minimizes (SP) and $\gamma_\#^{\Gamma^*}$ is admissible for
(SP), the reverse inequality also holds. Hence equality holds, and the
strictness clause in Theorem~\ref{thm:6.1'} yields
\[
    \gamma=\gamma_\#^{\Gamma^*}.
\]
Since $\gamma$ was an arbitrary minimizer, (SP) has the unique minimizer
$\gamma_\#^{\Gamma^*}$.

It remains to show that this plan does not depend on the auxiliary choices.
Let $\widetilde u$ be another Kantorovich potential furnished by
Lemma~\ref{lem:linear-cost-contact-set}, and set
\[
    \widetilde\Gamma^*
    \coloneqq 
    \Gamma_{\widetilde u}\setminus\Delta.
\]
Repeating the preceding argument with $\widetilde u$ shows that
$\gamma_\#^{\widetilde\Gamma^*}$ is also the unique minimizer of the same
problem (SP). Consequently,
\[
    \gamma_\#^{\widetilde\Gamma^*}
    =
    \gamma_\#^{\Gamma^*}.
\]
Thus the resulting plan is independent of the chosen Kantorovich potential
and of the corresponding supporting set.

Finally, let $\widetilde\psi$ be any other strictly concave function satisfying
the assumptions of the theorem. Keeping $u$ and $\Gamma^*$ fixed, repeat the
preceding argument with $\widetilde\psi$ in place of $\psi$. The plan furnished
by Theorem~\ref{thm:6.1'} is still $\gamma_\#^{\Gamma^*}$, since that plan
depends only on $\mu,\nu,\Gamma^*$ and not on the function used in conclusion
(c). Hence $\gamma_\#^{\Gamma^*}$ is also the unique minimizer of the secondary
problem with objective $\widetilde\psi$.

We may therefore denote this common plan by $\gamma_\#$ and its inducing map,
defined up to $\mu$-a.e. equality, by $t_\#$. This is the generalized EC map
associated with $(\mu,\nu)$.
\end{proof}

We are now ready to provide a proof of the main theorem.
\begin{proof}[Proof of Theorem~\ref{thm:main-theorem}]
Let $M\coloneqq \mu(\mathbb R^n)=\nu(\mathbb R^n)>0$ and normalize
$\bar\mu\coloneqq M^{-1}\mu$, $\bar\nu\coloneqq M^{-1}\nu$. Scaling by $M^{-1}$ is a
bijection between the finite coupling class $\Pi(\mu,\nu)$ and the probability
coupling class $\Pi(\bar\mu,\bar\nu)$, and it multiplies every transport
objective by $M^{-1}$. The hypotheses of
\cite[Theorem~3.3]{pegon2013concave-general} hold because each
$\phi_\epsilon$ is finite-valued, nonnegative, increasing, and strictly
concave. That theorem gives a unique probability optimizer induced by a map;
scaling it by $M$ gives the unique optimizer in $\Pi(\mu,\nu)$, induced by the
same map $t_\epsilon$.

We now prove convergence of the plans, and then of the maps. Let
$\gamma_\epsilon=(\mathrm{Id},t_\epsilon)_\#\mu$. Since $\Pi(\mu,\nu)$ is
weakly compact, every sequence $\epsilon_j\to 0^+$ has a subsequence such
that $\gamma_{\epsilon_j}$ converges weakly to some $\gamma_0\in\Pi(\mu,\nu)$.
By Proposition~\ref{prop:7.1'}, every such subsequential limit solves the
secondary problem $(SP)$ with secondary objective given by the first-order
profile $\phi$. By Theorem~\ref{thm:7.2'}, applied with $\psi=\phi$, this secondary
problem has the unique minimizer $\gamma_\#$ induced by the intrinsic
generalized EC map $t_\#$; the same theorem shows that this plan and map do
not depend on $\phi$. Hence every subsequential weak limit of
$(\gamma_\epsilon)$ is $\gamma_\#$. The narrow topology on finite measures of fixed mass over the Polish
space $\mathbb R^n\times\mathbb R^n$ is metrizable. If the full family did not
converge to $\gamma_\#$, a sequence $\epsilon_j\to0^+$ would remain
outside some neighborhood of $\gamma_\#$; compactness would give a convergent
subsequence, whose limit would have to be $\gamma_\#$, a contradiction.
Therefore $\gamma_\epsilon\to\gamma_\#$ weakly as $\epsilon\to0^+$.

It remains to pass from weak convergence of graph plans to convergence in
$\mu$-measure of the maps. Fix $\eta>0$. By Lusin's theorem, for every
$\delta>0$ there is a compact set $K\subset\mathbb R^n$ such that
$\mu(K^c)<\delta$ and $t_\#|_K$ is continuous. Then
\[
    A_{\eta,K}\coloneqq \{(x,y):x\in K,\ \|y-t_\#(x)\|\ge \eta\}
\]
 is closed, and $\gamma_\#(A_{\eta,K})=0$ because
$\gamma_\#=(\mathrm{Id},t_\#)_\#\mu$. Therefore, by the portmanteau theorem,
\[
    \limsup_{\epsilon\to0^+}\gamma_\epsilon(A_{\eta,K})=0.
\]
Since $\gamma_\epsilon=(\mathrm{Id},t_\epsilon)_\#\mu$,
\[
    \mu\{x:\|t_\epsilon(x)-t_\#(x)\|\ge\eta\}
    \le \mu(K^c)+\gamma_\epsilon(A_{\eta,K}).
\]
Taking $\limsup_{\epsilon\to0^+}$ and then letting $\delta\to 0^+$ gives
$t_\epsilon\to t_\#$ in $\mu$-measure.

\end{proof}

\section{Proof of Theorem~\ref{thm:6.1'}}
\label{sec:proof-of-6.1'}

This section proves Theorem~\ref{thm:6.1'} by disintegrating the given plan along oriented maximal transport rays, replacing each conditional plan by its excursion coupling, and gluing the conditional maps.  Ambrosio and Pratelli's oriented-ray disintegration makes each conditional plan forward along its ray \cite{ambrosio2003existence}.
Section~\ref{subsec:1D-theory} first develops the required one-dimensional result for mutually singular, stochastically ordered finite measures with atomless source; Section~\ref{subsec:proof-of-thm-6.1'} then carries out the measurable raywise construction.

\subsection{1D Theory: The Excursion Coupling}
\label{subsec:1D-theory}

Juillet \cite{juillet2020solution} studies optimal transport on $\mathbb{R}$ for the concave power costs
$c_\varepsilon(x,y)=|x-y|^{1-\varepsilon}$, $\varepsilon\in]0,1[$, and characterizes the limiting optimal plan as $\varepsilon\to0^+$. He shows that the limit plan is the so-called \emph{excursion coupling}; and that, for each $\varepsilon\in]0,1[$, the excursion coupling is the unique minimizer of $\int |x-y|^{1-\varepsilon}\,d\gamma$ among the $L^1$-optimal couplings. We generalize this result to the broader class of concave costs we consider, for measures satisfying the conditions stated in Theorem~\ref{thm:6.1'}. In particular, we are concerned only with the special case of the excursion coupling for a transport problem on the real line satisfying the following conditions:  (i) the two measures $\mu$ and $\nu$ are mutually singular with equal mass and finite first moments, (ii) the source measure $\mu$ is atomless, and (iii) the measures are stochastically ordered, in particular $F_\nu(x) \le F_\mu(x)$ for all $x \in \mathbb{R}$.
Under these conditions, the excursion coupling corresponds to a deterministic transport map which we now describe.
The first two conditions ensure that the coupling reduces to a deterministic map, while the third condition simplifies the description of the map.

We will reuse from \cite{juillet2020solution} the following definition of the completed graph of a function.
For a  function $H:\mathbb{R}\to\mathbb{R}$ which is right continuous with left limits (i.e., cadlag) and of bounded variation, write
$H(x^-)\coloneqq\lim_{z\to x^-}H(z)$. Its \emph{completed graph} is
\[
    {\operatorname{CG}(H)\coloneqq
    \{(x,h)\in\mathbb{R}^2:
    \min\{H(x^-),H(x)\}\le h\le \max\{H(x^-),H(x)\}\}.}
\]
{Thus at a continuity point $x$ this contains only the usual graph point
$(x,H(x))$, while at a jump it contains the vertical segment joining
$H(x^-)$ to $H(x)$. In our application $H=F_\rho\coloneqq F_\mu-F_\nu$, where
$\rho=\mu-\nu$ and $F_\rho(x)=\rho(]-\infty,x])$; the vertical segment at
$x$ records the atom $\rho(\{x\})=F_\rho(x)-F_\rho(x^-)$.}

We use the following mutually singular, ordered-case form of Juillet's \emph{excursion coupling}. Let $\mu,\nu$ be mutually singular finite positive Borel measures on $\mathbb{R}$ with equal positive mass $M$, and assume that $\nu$ stochastically dominates $\mu$, i.e., $F_\rho(x)\coloneqq F_\mu(x)-F_\nu(x)\ge 0$ for all $x\in\mathbb{R}$.
Juillet's construction is made on the completed graph
$\operatorname{CG}(F_\rho)$ rather than on the ordinary graph of
$F_\rho$. By \cite[Proposition~3.2]{juillet2020solution}, for Lebesgue-a.e.
level $h>0$, the horizontal section
\[
    \operatorname{CG}(F_\rho)\cap(\mathbb{R}\times\{h\})
\]
is finite and consists only of genuine crossings of the level $h$: locally the
completed graph passes from below to above the line $y=h$, or from above to
below it. Since $F_\rho(-\infty)=F_\rho(+\infty)=0<h$, these crossings start
with an entrance into the region above level $h$, then alternate between exits
and entrances, and end with an exit. Thus their $x$-coordinates can be written
as
\[
    x_1^h<x_2^h<\cdots<x_{2N(h)}^h,
\]
where $x_{2i-1}^h$ is an entrance and $x_{2i}^h$ is the next exit. Juillet's ordered-case construction pairs each entrance with the next exit. Equivalently, for every bounded Borel test function $\psi$,
\[
    \int_{\mathbb{R}^2}\psi(x,y)\,d\gamma_{\mathrm{EC}}(x,y)
    \coloneqq 
    \int_0^\infty \sum_{i=1}^{N(h)}
    \psi\!\left(x_{2i-1}^h,x_{2i}^h\right)\,dh,
\]
where the exceptional null set of levels is immaterial.
For probability measures this is exactly Juillet's construction in
\cite[Theorem~1.1]{juillet2020solution}; for common mass $M$, apply that
construction to $M^{-1}\mu$ and $M^{-1}\nu$ and multiply the resulting coupling
by $M$. Hence the measure defined above has marginals $\mu$ and $\nu$.
Since $x_{2i-1}^h<x_{2i}^h$ for every paired crossing, all EC
pairs are forward.

The following theorem states that the EC coupling in the mutually singular, ordered case with atomless $\mu$ is Monge and forward.

\begin{theorem}[Excursion Coupling Map for mutually singular, ordered measures, and atomless $\mu$]
\label{thm:EC-map-gives-a-plan-1D}
{Let $\mu,\nu$ be mutually singular finite positive Borel measures on $\mathbb{R}$ with equal positive mass and finite first moments. Assume that $\mu$ is atomless and that $\nu$ stochastically dominates $\mu$, i.e., $F_\nu(x)\le F_\mu(x)$ for all $x\in\mathbb{R}$. Let $\gamma_{\mathrm{EC}}$ be Juillet's excursion coupling. Then, there exists a Borel map $T_{\mathrm{EC}}$ such that}
\[
    \gamma_{\mathrm{EC}}=(\mathrm{Id},T_{\mathrm{EC}})_\#\mu
\]
and $\gamma_{\mathrm{EC}}\in\Pi(\mu,\nu)$ is forward:
\[
    \gamma_{\mathrm{EC}}(\{(x,y)\in\mathbb{R}^2:y<x\})=0\, .
\]
\end{theorem}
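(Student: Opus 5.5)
Forwardness is immediate from the level-set definition, so the plan concentrates on showing that $\gamma_{\mathrm{EC}}$ is induced by a Borel map. For forwardness, every pair $(x_{2i-1}^h,x_{2i}^h)$ appearing in the defining integral satisfies $x_{2i-1}^h<x_{2i}^h$. Taking $\psi=\mathbf 1_{\{y<x\}}$ (bounded Borel) in the defining formula therefore gives an integrand that vanishes at every non-exceptional level $h$, so $\gamma_{\mathrm{EC}}(\{y<x\})=0$. That $\gamma_{\mathrm{EC}}\in\Pi(\mu,\nu)$ is already recorded above via \cite[Theorem~1.1]{juillet2020solution} after normalizing by $M$.

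For the Monge property, I would show that $\gamma_{\mathrm{EC}}$ is concentrated on the graph of a function defined off a $\mu$-null set, and then make that function Borel. First, I would use \cite[Proposition~3.6]{juillet2020solution}, quoted in Section~\ref{sec:formulation}: for mutually singular marginals, Juillet's defining relation $S$ (on which the excursion coupling is concentrated) is single-valued away from the atoms of the source. Since $\mu$ is atomless, the set $A$ of atoms of $\mu$ is countable and $\mu(A)=0$, so $\gamma_{\mathrm{EC}}(A\times\mathbb R)=0$. Hence $\gamma_{\mathrm{EC}}$ is concentrated on $S'\coloneqq S\setminus(A\times\mathbb R)$, which is the graph of a function on its projection.

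To obtain Borel measurability, I would disintegrate $\gamma_{\mathrm{EC}}=\int \gamma_x\,d\mu(x)$ with respect to the first marginal, where $x\mapsto\gamma_x$ is a Borel probability kernel. Since $S'$ may not be Borel, I would replace it by a Borel (or $\sigma$-compact) set $B\subseteq S'$ of full $\gamma_{\mathrm{EC}}$-mass, using inner regularity of the finite Borel measure $\gamma_{\mathrm{EC}}$ on $\mathbb R^2$ (possibly after completion). Then $\gamma_x(B_x)=1$ for $\mu$-a.e.\ $x$, and each section $B_x$ has at most one point, so $\gamma_x=\delta_{T(x)}$ for $\mu$-a.e.\ $x$. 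Setting $T_{\mathrm{EC}}(x)\coloneqq\int y\,d\gamma_x(y)$ on the Borel set where the first moment is finite, and $T_{\mathrm{EC}}(x)\coloneqq x$ elsewhere, gives a Borel map because the kernel is Borel. Finite first moments of $\nu$ make $\int|y|\,d\gamma_x<\infty$ for $\mu$-a.e.\ $x$. Then $(\mathrm{Id},T_{\mathrm{EC}})_\#\mu=\int\delta_{(x,T(x))}\,d\mu=\gamma_{\mathrm{EC}}$.

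The main obstacle is the concentration and single-valuedness step: one must check that Juillet's relation from Proposition~3.6, stated for probability measures, transfers to mass $M$ under scaling, and that the full-mass set it provides is measurable enough to select $B$. If citing Proposition~3.6 proves awkward, I would instead argue directly from the level-set representation. For an entrance point $x$ that is a continuity point of $F_\rho$ (true off the countable jump set, which is $\mu$-null because $\mu$ is atomless and $\nu\perp\mu$), the crossing at $x$ happens at the single level $h=F_\rho(x)$. The matched exit is then $\inf\{z>x:F_\rho(z)<h\}$, that is, the first return below that level, which is a Borel function of $x$. This would give $T_{\mathrm{EC}}$ explicitly, with the defining integral showing that it pushes $\mu$ to the first marginal structure.
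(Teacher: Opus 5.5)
Your proposal is correct and follows essentially the paper's route. Forwardness comes from $x^h_{2i-1}<x^h_{2i}$ at every non-exceptional level, and the Monge property comes from Juillet's Proposition~3.6 single-valuedness, a Borel full-mass subset of his relation, disintegration over the first marginal, and Dirac kernels $\gamma_x=\delta_{T(x)}$ for $\mu$-a.e.\ $x$. The only difference is cosmetic: you read off $T_{\mathrm{EC}}$ as the barycenter of $\gamma_x$, whereas the paper inverts the Borel embedding $y\mapsto\delta_y$. Both work, and the appeal to finite first moments is unnecessary because a Dirac kernel automatically has finite mean.
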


\begin{proof}

Let $M\coloneqq \mu(\mathbb R)=\nu(\mathbb R)$ and normalize all three
measures before applying Juillet's probability theorem:
\[
    (\bar\mu,\bar\nu,\bar\gamma_{\mathrm{EC}})
    \coloneqq 
    (M^{-1}\mu,M^{-1}\nu,M^{-1}\gamma_{\mathrm{EC}}).
\]
The completed-graph construction is homogeneous under this
normalization.  Thus \cite[Theorem~1.1]{juillet2020solution}, applied to the
probability measures $\bar\mu,\bar\nu$, gives
$\bar\gamma_{\mathrm{EC}}\in\Pi(\bar\mu,\bar\nu)$; multiplying by $M$ gives
$\gamma_{\mathrm{EC}}\in\Pi(\mu,\nu)$. Since
$F_{\bar\mu}-F_{\bar\nu}\ge0$, every pairing at a level $h>0$ is forward.
Although the completed graph may meet the level $h=0$, the EC coupling is
obtained by integrating the levelwise pairings against Lebesgue measure $dh$.
Since $\mathcal L^1(\{0\})=0$, restricting to levels $h>0$ does not change
$\gamma_{\mathrm{EC}}$.

It remains to justify the Borel map assertion.
In the notation of the completed-graph construction above, the proof of
\cite[Proposition~3.6]{juillet2020solution} shows that, away from atoms of
the source, the paired crossings
$(x_{2i-1}^h,x_{2i}^h)$ contain at most one pair with any given first
coordinate. Denote Juillet's paired-crossing relation by
$\Gamma_{\mathrm J}$. Since $\bar\gamma_{\mathrm{EC}}$ is concentrated on
$\Gamma_{\mathrm J}$, choose a Borel set
$R\subset\Gamma_{\mathrm J}$ with $\bar\gamma_{\mathrm{EC}}(R)=1$, and set
$R_x\coloneqq\{y:(x,y)\in R\}$.
By the disintegration theorem on Polish spaces,
there is a Borel probability kernel
$x\mapsto\kappa_x\in\mathcal P(\mathbb R)$ such that
\[
    \bar\gamma_{\mathrm{EC}}(dx,dy)
    =\kappa_x(dy)\,\bar\mu(dx).
\]
Disintegrating $\bar\gamma_{\mathrm{EC}}(R)=1$ gives
\[
    1=\int_{\mathbb R}\kappa_x(R_x)\,d\bar\mu(x),
\]
so $\kappa_x(R_x)=1$ for $\bar\mu$-a.e. $x$. Since
$\bar\mu\perp\bar\nu$ and $\bar\mu$ is atomless, the preceding
singleton-fiber conclusion implies that $\kappa_x$ is a Dirac mass for
$\bar\mu$-a.e. $x$.
The set of Dirac masses is closed in $\mathcal P(\mathbb R)$, and
$y\mapsto\delta_y$ is a Borel isomorphism onto this set. Therefore
$T_{\mathrm{EC}}(x)\coloneqq \delta^{-1}(\kappa_x)$ defines a Borel map on a
Borel set of full $\bar\mu$-measure. Assigning the fixed value $0$ on its
Borel complement gives a Borel map on $\mathbb R$ satisfying
$\kappa_x=\delta_{T_{\mathrm{EC}}(x)}$ for $\bar\mu$-a.e. $x$.

Scaling back does not change the
conditional kernels, and hence
$\gamma_{\mathrm{EC}}=(\mathrm{Id},T_{\mathrm{EC}})_\#\mu$.
\end{proof}

\begin{lemma}[Forward plans are the ordered $L^1$ optimizers]
\label{lem:forward-equals-l1-ordered}
Let $\mu,\nu$ be finite positive Borel measures on $\mathbb{R}$ with equal mass and finite first moments. Assume that $\nu$ stochastically dominates $\mu$, i.e. $F_\nu(t)\le F_\mu(t)$ for all $t\in\mathbb{R}$. Let
\[
    \mathcal A\coloneqq \{\gamma\in\Pi(\mu,\nu):
    \gamma(\{(x,y):x\le y\})=\mu(\mathbb{R})\}\, .
\]
Then $\mathcal A$ is nonempty and $\mathcal A=\Pi_1(\mu,\nu)$.
\end{lemma}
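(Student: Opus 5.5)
The argument compares every coupling with one explicit forward coupling, the monotone (quantile) coupling, using the identity $\int|x-y|\,d\gamma\ge\int (y-x)\,d\gamma$. It is an equality exactly when $\gamma$ is supported on $\{x\le y\}$.

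\emph{Step 1 (nonemptiness).} Let $M=\mu(\mathbb R)$. For $s\in]0,M[$, let $G_\mu(s)$ and $G_\nu(s)$ be the left-continuous quantile functions of $\mu$ and $\nu$, and set $\gamma_{\mathrm{mon}}\coloneqq (G_\mu,G_\nu)_\#\mathcal L^1|_{]0,M[}\in\Pi(\mu,\nu)$.

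Stochastic dominance $F_\nu\le F_\mu$ gives $G_\mu(s)\le G_\nu(s)$ for every $s$. Indeed, $\{t:F_\nu(t)\ge s\}\subset\{t:F_\mu(t)\ge s\}$, so the infimum defining $G_\mu(s)$ is at most the one defining $G_\nu(s)$. Hence $\gamma_{\mathrm{mon}}$ is concentrated on $\{x\le y\}$, and $\mathcal A\neq\emptyset$.

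\emph{Step 2 (a lower bound for every coupling).} For any $\gamma\in\Pi(\mu,\nu)$ we have $|x-y|\ge y-x$ pointwise. Both $x$ and $y$ are $\gamma$-integrable because the first moments are finite. Hence
\[
\int|x-y|\,d\gamma\ \ge\ \int y\,d\nu-\int x\,d\mu\eqqcolon D .
\]
The right-hand side depends only on the marginals. The gap $\int(|x-y|-(y-x))\,d\gamma=2\int(x-y)_+\,d\gamma$ is zero if and only if $\gamma(\{x>y\})=0$, that is, if and only if $\gamma\in\mathcal A$.

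\emph{Step 3 (conclusion).} Every $\gamma\in\mathcal A$ attains the cost $D$. By Step 2, no coupling has cost below $D$. Step 1 supplies a coupling in $\mathcal A$, so the minimum of the Euclidean cost over $\Pi(\mu,\nu)$ equals $D$, and $\mathcal A\subset\Pi_1(\mu,\nu)$.

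Conversely, let $\gamma\in\Pi_1(\mu,\nu)$. Its cost is $D$, so by Step 2 we get $2\int(x-y)_+\,d\gamma=0$. Thus $\gamma(\{x>y\})=0$, which means $\gamma(\{x\le y\})=\mu(\mathbb R)$ and $\gamma\in\mathcal A$.

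\emph{Main obstacle.} The only step with real content is Step 1, specifically the quantile comparison $G_\mu\le G_\nu$ and the fact that $(G_\mu,G_\nu)_\#\mathcal L^1$ has the right marginals for finite measures of mass $M$. These are standard facts: $G_\mu$ pushes $\mathcal L^1|_{]0,M[}$ to $\mu$ because $G_\mu(s)\le t\iff s\le F_\mu(t)$. I would state them briefly rather than cite them. The rest is the elementary identity in Step 2, which handles both inclusions at once.
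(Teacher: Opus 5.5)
Your proposal is correct and follows the paper's proof: the quantile coupling $(G_\mu,G_\nu)_\#\mathcal L^1|_{]0,M[}$ gives nonemptiness, and integrating the identity $|y-x|=(y-x)+2(x-y)_+$ shows that the lower bound $\int y\,d\nu-\int x\,d\mu$, which depends only on the marginals, is attained exactly by the forward plans. Your added justifications, namely the inclusion of superlevel sets giving $G_\mu\le G_\nu$ and the equivalence $G_\mu(s)\le t\iff s\le F_\mu(t)$, are accurate.
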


\begin{proof}
Nonemptiness follows from the quantile coupling. If
$G_\alpha(s)\coloneqq \inf\{x:F_\alpha(x)\ge s\}$ for $s\in]0,M[$, where
$M\coloneqq \mu(\mathbb{R})=\nu(\mathbb{R})$, then $F_\nu\le F_\mu$ implies
$G_\mu(s)\le G_\nu(s)$, and
$(G_\mu,G_\nu)_\#\mathcal L^1|_{]0,M[}$ belongs to $\mathcal A$.

For any coupling $\gamma$,
\[
    |y-x|=(y-x)+2(x-y)_+ .
\]
Integrating gives
\[
    \int |y-x|\,d\gamma
    =
    \int y\,d\nu-\int x\,d\mu
    +2\int (x-y)_+\,d\gamma
    \ge
    \int y\,d\nu-\int x\,d\mu .
\]
Every forward plan attains equality. Conversely, equality forces
$(x-y)_+=0$ $\gamma$-a.e., hence the plan is forward.
\end{proof}

\begin{lemma}[Lexicographic variational principle]
\label{lem:lexicographic-variational-main}
Let $\mu,\nu$ be finite positive Borel measures on $\mathbb{R}$ with equal mass and finite first moments. Let $c_2:\mathbb{R}^2\to\mathbb{R}$ be lower semicontinuous with lower bound $c_2(x,y) \geq -C(1+|x-y |)$ for all $x,y \in \mathbb{R}$ for some constant $C \in [0,\infty[$, and let $\gamma$ minimize $\int c_2\,d\eta$ among the $L^1$-optimal plans, with finite secondary value. Then there are a 1-Lipschitz function $u:\mathbb{R}\to\mathbb{R}$ and a Borel set
\[
    S\subset\Gamma_u\coloneqq \{(x,y)\in\mathbb{R}^2: |x-y|=u(x)-u(y)\}
\]
with full $\gamma$-mass such that, for every finite family
$(x_i,y_i)_{i=1}^N\subset S$ and every permutation $\tau$,
\begin{equation}
    \sum_i |x_i-y_i|\le \sum_i |x_i-y_{\tau(i)}|\, ,
    \label{eq:lex-primary-main}
\end{equation}
and, if equality holds, then
\begin{equation}
    \sum_i c_2(x_i,y_i)\le \sum_i c_2(x_i,y_{\tau(i)})\, .
    \label{eq:lex-secondary-main}
\end{equation}
\end{lemma}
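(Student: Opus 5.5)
The plan is to get \eqref{eq:lex-primary-main} from Kantorovich duality, and \eqref{eq:lex-secondary-main} by a cyclical-monotonicity argument for the secondary problem restricted to the contact set $\Gamma_u$.

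\emph{Primary part.} Apply Lemma~\ref{lem:linear-cost-contact-set} in dimension one to get a 1-Lipschitz $u$ with $\Pi_1(\mu,\nu)=\{\eta:\eta(\Gamma_u)=\mu(\mathbb R)\}$. Since $\gamma\in\Pi_1(\mu,\nu)$, it is concentrated on the closed set $\Gamma_u$. The same telescoping argument as in that lemma shows that any finite family in $\Gamma_u$ satisfies \eqref{eq:lex-primary-main}. Thus \eqref{eq:lex-primary-main} holds on every subset $S\subset\Gamma_u$.

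\emph{Secondary part.} Equality in \eqref{eq:lex-primary-main} means exactly $\sum_i |x_i-y_{\tau(i)}|=\sum_i (u(x_i)-u(y_{\tau(i)}))$. Since each term satisfies $|x_i-y_{\tau(i)}|\ge u(x_i)-u(y_{\tau(i)})$, this forces every permuted pair $(x_i,y_{\tau(i)})$ into $\Gamma_u$.

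So it suffices to produce a Borel set $S\subset\Gamma_u$ of full $\gamma$-mass that is $c_2$-cyclically monotone in the restricted sense: the inequality holds for permutations whose permuted pairs all lie in $\Gamma_u$. The plan is to reduce to standard cyclical monotonicity with the modified cost
\[
\tilde c(x,y)\coloneqq c_2(x,y)+\chi_{\Gamma_u}(x,y),\qquad \chi_{\Gamma_u}=0 \text{ on }\Gamma_u,\ +\infty\text{ off it}.
\]
The steps are as follows.
\begin{enumerate}
\item $\tilde c$ is lower semicontinuous, because $\Gamma_u$ is closed.
\item $\tilde c$ is bounded below by $-C(1+|x|+|y|)$, which is integrable against every coupling by the finite first moments.
\item $\gamma$ minimizes $\int\tilde c\,d\eta$ over $\Pi(\mu,\nu)$ with finite value. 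Indeed, plans with finite $\tilde c$-cost are concentrated on $\Gamma_u$, hence lie in $\Pi_1(\mu,\nu)$ by Lemma~\ref{lem:linear-cost-contact-set}.
\item Invoke the standard theorem that optimal plans for an l.s.c.\ cost bounded below by a marginally integrable function, with finite optimal value, are concentrated on a $\tilde c$-cyclically monotone Borel (even $\sigma$-compact) set. Choose $S$ to be this set intersected with $\Gamma_u\cap\{\tilde c<\infty\}$.
\end{enumerate}
For a family in $S$ where equality holds in \eqref{eq:lex-primary-main}, all permuted pairs lie in $\Gamma_u$, so $\tilde c=c_2$ on both sides and \eqref{eq:lex-secondary-main} follows.

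\emph{Main obstacle.} The hard step is the cyclical-monotonicity theorem for costs that take the value $+\infty$ and are unbounded below. If it cannot simply be cited, I would prove it by the usual contradiction argument. Suppose the set $\Sigma$ of $N$-tuples in $(\mathrm{supp}\,\gamma)^N$ violating the inequality has positive $\gamma^{\otimes N}$-measure. Then take a Borel compact piece $K$ of $\Sigma$ on which $c_2$ is bounded and the strict gap is at least $\delta>0$; this uses lower semicontinuity together with an inner regularity and Lusin-type restriction on compact sets. Build the competitor
\[
\gamma-\tfrac{\lambda}{N}\sum_i(\mathrm{proj}_i)_\#\gamma^{\otimes N}|_K+\tfrac{\lambda}{N}\sum_i(\mathrm{proj}_{x_i},\mathrm{proj}_{y_{\tau(i)}})_\#\gamma^{\otimes N}|_K .
\]
It has the same marginals and strictly smaller $\tilde c$-cost, contradicting optimality. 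The delicate points are that the competitor stays concentrated where $\tilde c<\infty$, which holds because violating tuples with finite left side have finite right side, and that the integrals involved are finite.

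Once $\gamma^{\otimes N}(\Sigma)=0$ for every $N$ and every permutation $\tau$, extracting a single full-measure set $S$ is standard. One way is the construction behind Lemma~\ref{lem:linear-cost-contact-set}-type duality, i.e.\ a Rockafellar-style potential. Alternatively, one can cite Beiglb\"ock--Goldstern--Maresch--Schachermayer, which handles exactly this setting of l.s.c.\ costs with values in $]-\infty,+\infty]$.
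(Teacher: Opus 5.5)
Your proposal is correct and essentially the paper's argument. Like the paper, you encode the restriction to the closed contact set $\Gamma_u$ as a $+\infty$ penalty, invoke necessity of cyclical monotonicity for the resulting lower semicontinuous cost, and use the vanishing 1-Lipschitz gaps to show that equality in the primary inequality forces every rearranged pair into $\Gamma_u$. The one cosmetic difference is that the paper adds the separable term $C(1+|x|+|y|)$ on $\Gamma_u$, so the extended cost is nonnegative and the Ambrosio--Pratelli theorem for $[0,+\infty]$-valued costs applies verbatim, with the shift cancelling under permutations; you instead appeal to a version of that theorem allowing a marginally integrable lower bound.
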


\begin{proof}
Let $M\coloneqq \mu(\mathbb{R})=\nu(\mathbb{R})$. Apply
    Lemma~\ref{lem:linear-cost-contact-set}, in dimension one, to the pair
    $(\mu,\nu)$. Let $u:\mathbb{R}\to\mathbb{R}$ and
    \[
        \Gamma_u\coloneqq \{(x,y)\in\mathbb{R}^2: |x-y|=u(x)-u(y)\}
    \]
    be the resulting Kantorovich potential and contact set. Set
    \[
        \mathcal K_{\Gamma_u}\coloneqq 
        \{\eta\in\Pi(\mu,\nu):\eta(\Gamma_u)=M\}.
    \]
    Lemma~\ref{lem:linear-cost-contact-set} gives
    $\Pi_1(\mu,\nu)=\mathcal K_{\Gamma_u}$. Hence $\gamma$ is a
    $c_2$-minimizer over this restricted coupling class.

Encode the restriction to $\Gamma_u$ and the linear lower bound at the same time by setting
\[
    \widehat c_2(x,y)\coloneqq 
    \begin{cases}
        c_2(x,y)+C(1+|x|+|y|), & (x,y)\in\Gamma_u,\\
        +\infty, & (x,y)\notin\Gamma_u .
    \end{cases}
\]
Since $c_2$ is lower semicontinuous and $\Gamma_u$ is closed, $\widehat c_2$ is lower semicontinuous. Moreover, the lower bound
$c_2(x,y)\ge -C(1+|x-y|)$ gives
\[
    \widehat c_2(x,y)
    \ge C(|x|+|y|-|x-y|)
    \ge 0 .
\]
For $\eta\in\mathcal K_{\Gamma_u}$ we have
\[
    \int \widehat c_2\,d\eta
    =
    \int c_2\,d\eta
    +
    C\int (1+|x|+|y|)\,d\eta,
\]
and the last term is constant over $\Pi(\mu,\nu)$, using that the measures have finite first moments. If
$\eta\notin\mathcal K_{\Gamma_u}$, then $\int\widehat c_2\,d\eta=+\infty$.
Thus minimizing $c_2$ over $\mathcal K_{\Gamma_u}$ is equivalent to minimizing
$\widehat c_2$ over all couplings in $\Pi(\mu,\nu)$; in particular, $\gamma$
minimizes $\widehat c_2$.

The cost $\widehat c_2:\mathbb R^2\to[0,+\infty]$ is lower
semicontinuous, and the assumed finite secondary value $\int c_2\,d\gamma$ together with the
finite first moments gives
$\int\widehat c_2\,d\gamma<+\infty$.  
The case $M=0$ is
vacant.
If $M>0$, apply the necessity part of
\cite[Theorem~3.2]{ambrosio2003existence} to the probability plan
$M^{-1}\gamma$ with marginals $M^{-1}\mu,M^{-1}\nu$. Intersecting the $\widehat c_2$-cyclically monotone Borel set
supplied by that theorem with $\Gamma_u$ gives a
full-mass Borel set $S\subset\Gamma_u$ such that, whenever
$(x_i,y_i)\subset S$ and every rearranged pair $(x_i,y_{\tau(i)})$ also lies in
$\Gamma_u$,
\begin{align*}
    \sum_i \widehat c_2(x_i,y_i)
    \le
    \sum_i \widehat c_2(x_i,y_{\tau(i)}) \, .
\end{align*}
On these terms $\widehat c_2(x,y)=c_2(x,y)+C(1+|x|+|y|)$, and the added terms cancel
under the permutation. Hence
\begin{align}
    \sum_i c_2(x_i,y_i)
    \le
    \sum_i c_2(x_i,y_{\tau(i)})\, .
\label{eq:restricted-c2-main}
\end{align}

Now fix $(x_i,y_i)\subset S$ and an arbitrary permutation $\tau$. Since the source and target lists are unchanged,
\[
    \sum_i(u(x_i)-u(y_i))
    =
    \sum_i(u(x_i)-u(y_{\tau(i)})).
\]
The original pairs lie in $\Gamma_u$, while $u$ is 1-Lipschitz, hence
\[
    \sum_i |x_i-y_i|
    =
    \sum_i(u(x_i)-u(y_{\tau(i)}))
    \le
    \sum_i |x_i-y_{\tau(i)}|,
\]
which proves \eqref{eq:lex-primary-main}. If equality holds, then every nonnegative gap
\[
    |x_i-y_{\tau(i)}|-u(x_i)+u(y_{\tau(i)})
\]
is zero, so every rearranged pair belongs to $\Gamma_u$. Applying \eqref{eq:restricted-c2-main} gives \eqref{eq:lex-secondary-main}.
\end{proof}

\begin{lemma}[Secondary swapping for a secondary minimizer]
\label{lem:secondary-swap-general-profile}
Let $\mu,\nu$ be finite positive Borel measures on $\mathbb{R}$ with equal mass and finite first moments. Let $\psi:[0,\infty[\to\mathbb{R}$ be lower semicontinuous with lower bound $\psi(d) \geq -C(1+ d)$ for all $d \in [0,\infty[$ for some constant $C \in [0,\infty[$, and suppose that $\gamma$ minimizes
\[
    \int_{\mathbb{R}^2}\psi(|y-x|)\,d\eta
\]
among the $L^1$-optimal plans $\eta\in\Pi_1(\mu,\nu)$, and assume the displayed secondary value is finite. Then there is a Borel set $S\subset\mathbb{R}^2$ with $\gamma(S)=\gamma(\mathbb{R}^2)$ such that for all $(x,y),(x',y')\in S$,
\begin{equation}
    |y-x|+|y'-x'|\le |y'-x|+|y-x'|,
    \label{eq:primary-swap-main}
\end{equation}
and, whenever equality holds in \eqref{eq:primary-swap-main},
\begin{equation}
    \psi(|y-x|)+\psi(|y'-x'|)
    \le
    \psi(|y'-x|)+\psi(|y-x'|).
    \label{eq:secondary-swap-main}
\end{equation}
\end{lemma}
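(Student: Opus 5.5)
This is the $N=2$ specialization of Lemma~\ref{lem:lexicographic-variational-main}, with secondary cost $c_2(x,y)\coloneqq\psi(|y-x|)$. The plan is to verify that lemma's hypotheses for this $c_2$ and then read off the two-point conclusion.

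First, the hypotheses. The map $(x,y)\mapsto|y-x|$ is continuous, and $\psi$ is lower semicontinuous on $[0,\infty[$. Their composition is therefore lower semicontinuous on $\mathbb R^2$: if $(x_k,y_k)\to(x,y)$, then $d_k\coloneqq|y_k-x_k|\to d\coloneqq|y-x|$, and $\liminf\psi(d_k)\ge\psi(d)$. The lower bound $\psi(d)\ge -C(1+d)$ gives exactly $c_2(x,y)\ge -C(1+|x-y|)$. By assumption, $\gamma$ minimizes $\int c_2\,d\eta$ over $\Pi_1(\mu,\nu)$ with finite value. The measures have equal mass and finite first moments. So Lemma~\ref{lem:lexicographic-variational-main} applies and yields a Borel set $S\subset\Gamma_u$ of full $\gamma$-mass with properties \eqref{eq:lex-primary-main}--\eqref{eq:lex-secondary-main} for every finite family in $S$ and every permutation.

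Next, the conclusion. Given $(x,y),(x',y')\in S$, take the family $(x_1,y_1)=(x,y)$, $(x_2,y_2)=(x',y')$ with $N=2$, and let $\tau$ be the transposition. Then \eqref{eq:lex-primary-main} reads $|x-y|+|x'-y'|\le|x-y'|+|x'-y|$, which is \eqref{eq:primary-swap-main} once we note $|a-b|=|b-a|$. In the equality case, \eqref{eq:lex-secondary-main} gives $\psi(|x-y|)+\psi(|x'-y'|)\le\psi(|x-y'|)+\psi(|x'-y|)$, which is \eqref{eq:secondary-swap-main}.

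Two edge cases need a word. If the pairs coincide, the inequalities are trivial; the lemma allows repeated entries in the family, so no separate argument is needed. The condition $\gamma(S)=\gamma(\mathbb R^2)$ is the "full mass" statement of the earlier lemma, since $\gamma(\mathbb R^2)=\mu(\mathbb R)$.

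I expect no real obstacle. The only point needing care is the lower semicontinuity of $c_2$, which relies on $\psi$ being lsc (not merely measurable) and on continuity of the distance. The substantive work, including the cyclical-monotonicity theorem of \cite{ambrosio2003existence} applied to the lsc cost that is infinite off $\Gamma_u$, is already contained in Lemma~\ref{lem:lexicographic-variational-main}.
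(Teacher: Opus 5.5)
Your proposal is correct and is essentially the paper's proof: set $c_2(x,y)=\psi(|y-x|)$, check lower semicontinuity and the linear lower bound, apply Lemma~\ref{lem:lexicographic-variational-main}, and read off both inequalities from the transposition $(x,y),(x',y')\mapsto(x,y'),(x',y)$. You also verify explicitly that composing the lower semicontinuous $\psi$ with the continuous distance gives a lower semicontinuous $c_2$, a step the paper only asserts.
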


\begin{proof}
Set $c_2(x,y)=\psi(|y-x|)$. Then $c_2$ is lower semicontinuous with lower bound $c_2(x,y) \geq -C(1+| x-y |)$ for all $x,y \in \mathbb{R}$ for $C\in[0,\infty[$. Apply Lemma~\ref{lem:lexicographic-variational-main} to this secondary cost. The two-point permutation in that lemma is exactly the rerouting
\[
    (x,y),(x',y')\longmapsto (x,y'),(x',y).
\]
The primary inequality in the lexicographic lemma gives \eqref{eq:primary-swap-main}. If the primary costs tie, the secondary inequality in the lexicographic lemma gives \eqref{eq:secondary-swap-main}.
\end{proof}

\begin{lemma}[Strict concavity comparison]
    \label{lem:strict-concavity-arch-comparison}
    Let $\psi$ be finite-valued and strictly concave on $[0,\infty[$. If
    $a,c>0$ and $b\ge0$, then
    \[
        \psi(a+b)+\psi(b+c)>\psi(b)+\psi(a+b+c)\, .
    \]
\end{lemma}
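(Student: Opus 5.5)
We write the two arguments $a+b$ and $b+c$ as convex combinations of the two extreme points $b$ and $a+b+c$, and then apply strict concavity to each. Both $a+b$ and $b+c$ lie strictly between $b$ and $a+b+c$, because $a,c>0$. Set $L\coloneqq a+c>0$ and define
\[
    \lambda\coloneqq \frac{c}{a+c}\in\,]0,1[\,,\qquad 1-\lambda=\frac{a}{a+c}.
\]
Then
\[
    a+b=\lambda\, b+(1-\lambda)(a+b+c),
\]
since $\lambda b+(1-\lambda)(a+b+c)=b+(1-\lambda)(a+c)=b+a$. Symmetrically,
\[
    b+c=(1-\lambda)\, b+\lambda\,(a+b+c).
\]

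Strict concavity of $\psi$, applied with the distinct points $b\neq a+b+c$ and weights in the open interval $]0,1[$, gives two strict inequalities:
\[
    \psi(a+b)>\lambda\psi(b)+(1-\lambda)\psi(a+b+c),
\]
\[
    \psi(b+c)>(1-\lambda)\psi(b)+\lambda\psi(a+b+c).
\]
Adding them, the weights combine to give exactly $\psi(b)+\psi(a+b+c)$ on the right-hand side. This is the claimed inequality.

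Finiteness of $\psi$ on $[0,\infty[$ is what lets us add these inequalities without any $\infty-\infty$ issue. The only point to check is the nondegeneracy: $\lambda\in\,]0,1[$ and the two extreme points are distinct. Both follow directly from $a,c>0$; $b\ge0$ is needed only so that all arguments lie in the domain $[0,\infty[$. No step here is a real obstacle; the verification of the convex-combination identities is the only computation involved.
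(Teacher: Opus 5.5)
Your proof is correct and follows essentially the same route as the paper. Both arguments write $a+b$ and $b+c$ as complementary convex combinations of $b$ and $a+b+c$, apply strict concavity to each, and add; the only difference is that you give the weight $\lambda=c/(a+c)$ explicitly, whereas the paper infers its existence from $(a+b)+(b+c)=b+(a+b+c)$.
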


\begin{proof}
Set $\zeta=a+b+c$. The points $a+b$ and $b+c$ lie strictly between $b$ and $\zeta$, and their sum equals $b+\zeta$. Hence for some $\lambda\in]0,1[$,
\[
    a+b=\lambda b+(1-\lambda)\zeta,\qquad
    b+c=(1-\lambda)b+\lambda \zeta.
\]
Applying strict concavity to these two identities and then adding the resulting inequalities gives the claim.
\end{proof}

For any distance-cost profile $\psi:[0,\infty[\to\mathbb R$ for which the integral is well-defined, write
\[
   J_\psi(\gamma)\coloneqq \int_{\mathbb{R}^2}\psi(|y-x|)\,d\gamma(x,y)\, .
\]

\begin{theorem}[Variational Property of the Excursion Coupling]
    \label{thm:5.2'}
    Let $\mu,\nu$ be mutually singular finite positive Borel measures on $\mathbb{R}$
    with equal positive mass and finite first moments. Assume that $\mu$ is atomless
    and that $\nu$ stochastically dominates $\mu$. Let $\gamma_{\mathrm{EC}}$ be the
    excursion coupling for this pair. %
    Let
    \[
        \mathcal A\coloneqq \{\gamma\in\Pi(\mu,\nu):
        \gamma(\{(x,y):x\le y\})=\mu(\mathbb{R})\}.
    \]
    Then $\mathcal A$ is nonempty and $\mathcal A=\Pi_1(\mu,\nu)$. Moreover, if
$\phi:[0,\infty[\to\mathbb{R}$ is finite-valued and concave, and satisfies
\[
    \phi(d)\ge -C(1+d)\qquad\text{for all }d\ge0
\]
for some $C\in[0,\infty[$, then
    \[
        J_\phi(\gamma_{\mathrm{EC}})
        =
        \min_{\gamma\in\mathcal A} J_\phi(\gamma)\, .
    \]
    If $\phi$ is strictly concave, then $\gamma_{\mathrm{EC}}$ is the unique
    minimizer.
    \end{theorem}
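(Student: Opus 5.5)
The first claim, that $\mathcal A$ is nonempty and $\mathcal A=\Pi_1(\mu,\nu)$, is exactly Lemma~\ref{lem:forward-equals-l1-ordered}, so the work is in the variational statement. I would first handle strictly concave $\phi$ and prove that every minimizer of $J_\phi$ over $\mathcal A$ equals $\gamma_{\mathrm{EC}}$. A minimizer exists by the direct method. $\Pi_1(\mu,\nu)$ is weakly compact, and $J_\phi$ is lower semicontinuous there by Lemma~\ref{lem:lsc-marginal-lower-control} with $G(z)=|z|$. The value $J_\phi$ is finite on all of $\Pi_1$, because $\phi$ is bounded above by an affine function and below by $-C(1+d)$. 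Once the strictly concave case is done, the general concave case follows by approximation. Put $\phi_\delta\coloneqq\phi+\delta\sqrt{d}$, which is strictly concave and still satisfies the same linear lower bound. Since $\gamma_{\mathrm{EC}}$ minimizes $J_{\phi_\delta}$ over $\mathcal A$, letting $\delta\to0^+$ with $\int\sqrt{d}\,d\gamma\le\int(1+d)\,d\gamma<\infty$ gives $J_\phi(\gamma_{\mathrm{EC}})\le J_\phi(\gamma)$ for every $\gamma\in\mathcal A$.

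Let $\gamma$ minimize $J_\phi$ over $\mathcal A$. Lemma~\ref{lem:secondary-swap-general-profile} gives a full-mass Borel set $S$, and I would intersect it with the forward half-plane $\{x\le y\}$ and with $A\times A^c$, where $\mu(A^c)=\nu(A)=0$. I then check Juillet's three arch properties on $S$.

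\emph{No crossing.} Take $x<x'<y<y'$. The swap $(x,y'),(x',y)$ has the same total length, so \eqref{eq:primary-swap-main} holds with equality. With $a=x'-x$, $b=y-x'$, $c=y'-y$, inequality \eqref{eq:secondary-swap-main} says
\[
\phi(a+b)+\phi(b+c)\le\phi(a+b+c)+\phi(b),
\]
which contradicts Lemma~\ref{lem:strict-concavity-arch-comparison}. Backward arches are already excluded, so mixed orientations cannot occur.

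\emph{No connection.} If $y=x'$, then $y$ lies in both $A^c$ and $A$, which is impossible.

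\emph{Same orientation when nested.} This holds automatically, since every pair in $S$ is forward.

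Hence $\gamma$ is concentrated on a set with Juillet's arch properties. By the uniqueness part of Juillet's characterization \cite[Main Theorem and Definition~0.3]{juillet2020solution} (after normalizing to probability measures), $\gamma=\gamma_{\mathrm{EC}}$. Since minimizers exist, $\gamma_{\mathrm{EC}}$ is the unique minimizer.

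The main obstacle is invoking Juillet's uniqueness statement correctly. His theorem states that the excursion coupling is the unique coupling concentrated on a set with properties 1--3; I must verify that it needs only concentration on such a set, not conditions on the topological support, which the paper explicitly notes. I also need a genuine equality case in the swap inequality, which holds here because all the arches involved are forward. If that citation proves delicate, the fallback is to show directly that non-crossing, forward, non-connecting arches force the levelwise entrance/exit pairing in the completed graph of $F_\rho$. That requires counting mass above and below each arch.
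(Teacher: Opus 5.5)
Your proposal is correct and follows the paper's proof essentially step for step. The shared steps are Lemma~\ref{lem:forward-equals-l1-ordered} for $\mathcal A=\Pi_1(\mu,\nu)$, existence by the direct method, the swap set from Lemma~\ref{lem:secondary-swap-general-profile} made forward and diagonal-free, exclusion of crossings via Lemma~\ref{lem:strict-concavity-arch-comparison}, Juillet's uniqueness after normalization, and a strictly concave perturbation for merely concave $\phi$. The only deviations are minor and valid: you rule out connected arches directly from $S\subset A\times A^c$ (so $y\in A^c$ cannot equal $x'\in A$) instead of the paper's $b=0$ reroute to $(x,y')$, $(y,y)$, and you perturb by $+\delta\sqrt{d}$ rather than $-\varepsilon e^{-r}$.
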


\begin{proof}
Theorem~\ref{thm:EC-map-gives-a-plan-1D} gives
$\gamma_{\mathrm{EC}}\in\Pi(\mu,\nu)$ and
\[
    \gamma_{\mathrm{EC}}(\{(x,y):y<x\})=0.
\]
Equivalently,
\[
    \gamma_{\mathrm{EC}}(\{(x,y):x\le y\})=\mu(\mathbb{R}),
\]
so $\gamma_{\mathrm{EC}}\in\mathcal A$.
Lemma~\ref{lem:forward-equals-l1-ordered} gives that $\mathcal A=\Pi_1(\mu,\nu)$.

First assume that $\phi$ is strictly concave.

Since $\phi$ is finite-valued and concave on $[0,\infty[$, it is lower
semicontinuous. The linear lower bound gives
\[
    \phi(|y-x|)\ge -C(1+|y-x|)
    \ge -C(1+|x|+|y|).
\]
Thus
\[
    (x,y)\mapsto \phi(|y-x|)+C(1+|x|+|y|)
\]
is nonnegative and lower semicontinuous. By the portmanteau theorem, its integral is
lower semicontinuous under weak convergence of couplings. Since the added
term has the same finite integral for every coupling in $\Pi(\mu,\nu)$, it
follows that $J_\phi$ is lower semicontinuous on $\Pi(\mu,\nu)$, hence on
$\mathcal A$. Also, finite-valued concavity gives an affine upper bound on $\phi$, while the
assumed lower bound gives affine lower control; hence $J_\phi$ is finite on
$\mathcal A$ because all couplings have finite first moments.

The secondary problem on $\mathcal A$ has a minimizer by the direct method.
Indeed, $\Pi(\mu,\nu)$ is compact for weak convergence: fixed marginals imply
tightness, and the marginal constraints are closed under weak convergence.
Moreover, $\mathcal A$ is weakly closed in $\Pi(\mu,\nu)$: if $\gamma_n\in\mathcal A$ and $\gamma_n\to\gamma$ weakly, then, since $\{(x,y):x\le y\}$ is closed, the portmanteau theorem gives
\[
    \mu(\mathbb R)
    =
    \limsup_n \gamma_n(\{x\le y\})
    \le
    \gamma(\{x\le y\})
    \le
    \mu(\mathbb R),
\]
so $\gamma\in\mathcal A$. Thus $\mathcal A$ is compact for weak convergence. Since $J_\phi$ is lower semicontinuous on $\mathcal A$, if $(\gamma_n)\subset\mathcal A$ satisfies
\[
    J_\phi(\gamma_n)\to\inf_{\eta\in\mathcal A}J_\phi(\eta),
\]
then a weakly convergent subsequence has a limit $\gamma\in\mathcal A$, and lower semicontinuity gives
\[
    J_\phi(\gamma)\le\liminf_n J_\phi(\gamma_n)
    =
    \inf_{\eta\in\mathcal A}J_\phi(\eta).
\]
Hence $\gamma$ attains the minimum.

Let $\gamma$ be any minimizer. Apply
Lemma~\ref{lem:secondary-swap-general-profile} with $\psi=\phi$, and let
$S$ be the full-mass set it provides. Since $\mu\perp\nu$ and
$\gamma\in\mathcal A$, we may take $S$ so that every $(x,y)\in S$ satisfies
$x<y$.

The secondary swap inequality rules out crossing arches. If
$(x,y),(x',y')\in S$ and $x<x'<y<y'$, set
\[
    a=x'-x,\qquad b=y-x',\qquad c=y'-y .
\]
Then the primary costs tie under the rerouting
$(x,y),(x',y')\mapsto (x,y'),(x',y)$, while
Lemma~\ref{lem:strict-concavity-arch-comparison} gives
\[
    \phi(y-x)+\phi(y'-x')
    >
    \phi(y'-x)+\phi(y-x'),
\]
contradicting \eqref{eq:secondary-swap-main}. The same argument after relabeling
rules out the symmetric crossing pattern.

The secondary swap inequality also rules out connected arches. If
$x<y=x'<y'$, set $a=y-x$ and $c=y'-y$. The rerouting to $(x,y')$ and $(y,y)$
ties the primary distance cost. Lemma~\ref{lem:strict-concavity-arch-comparison} gives
\[
    \phi(a)+\phi(c)>\phi(a+c)+\phi(0),
\]
again contradicting \eqref{eq:secondary-swap-main}.

Finally, nested arches have the same orientation because every route in $S$ is
forward. %

By the preceding paragraphs, any strictly concave minimizer $\gamma$ is
concentrated on a full-mass set satisfying Juillet's monotone-arch conditions:
arches do not cross, arches do not connect, and nested arches have the same
orientation. Let $M\coloneqq\mu(\mathbb R)=\nu(\mathbb R)>0$. The full-mass
monotone-arch conditions are unchanged after scaling by $M^{-1}$, and the
completed-graph construction is homogeneous. Juillet's Main Theorem
\cite{juillet2020solution}, applied to the probability coupling
$M^{-1}\gamma$, therefore gives
$M^{-1}\gamma=M^{-1}\gamma_{\mathrm{EC}}$. Hence
$\gamma=\gamma_{\mathrm{EC}}$. Since $\gamma$ was an arbitrary minimizer,
the strictly concave minimizer is unique.

If $\phi$ is merely concave, set
$\phi_\varepsilon(r)=\phi(r)-\varepsilon e^{-r}$. Then $\phi_\varepsilon$ is
finite-valued, strictly concave, and satisfies a lower bound of the same form
$\phi_\varepsilon(r)\ge -C_\varepsilon(1+r)$.
        The strictly concave
        case gives
        \[
            J_{\phi_\varepsilon}(\gamma_{\mathrm{EC}})
            \le
            J_{\phi_\varepsilon}(\gamma)
        \]
        for every $\gamma\in\mathcal A$. Letting $\varepsilon\to0^+$ gives
        $J_\phi(\gamma_{\mathrm{EC}})\le J_\phi(\gamma)$ for every
        $\gamma\in\mathcal A$, since $e^{-|y-x|}$ is bounded and the couplings have
        finite mass. Thus $\gamma_{\mathrm{EC}}$ is a minimizer for $\phi$.
    \end{proof}

\subsection{Proof of Theorem~\ref{thm:6.1'}}
\label{subsec:proof-of-thm-6.1'}

The conclusion of the theorem differs from that of \cite[Theorem 6.1]{ambrosio2003existence} in that the constructed plan $\gamma_{\#}$ is given by the Excursion-Coupling map on each transport ray instead of the monotone map. We follow the same proof methodology: we disintegrate the initial plan $\gamma$ along its transport rays, apply a 1D selection rule on each ray, and then reassemble the result. This disintegration and gluing procedure is enabled by the general measure-theoretic results leveraged by Ambrosio and Pratelli \cite[Thm. 9.1-9.4]{ambrosio2003existence}.
Our key modification is the application of the concave 1D theory (our Theorems \ref{thm:EC-map-gives-a-plan-1D} and \ref{thm:5.2'}) to establish the new conclusion in part (c).

\begin{lemma}[Raywise inheritance]
\label{lem:raywise-inheritance}
Assume that $\mu,\nu,\gamma$, and the $\sigma$-compact diagonal-free set
$\Gamma$ satisfy the hypotheses of Theorem~\ref{thm:6.1'}.  Recall the point-to-open-maximal-ray map $\pi_\Gamma:T_\Gamma\longrightarrow \mathcal S_o(\mathbb R^n)$ and the pair-to-closed-ray map $r:\Gamma\to\mathcal S_c(\mathbb R^n)$ introduced above, equivalently characterized by
\eqref{eq:pair-to-ray-midpoint}.
Set $\sigma\coloneqq r_\#\gamma$, and let
\begin{equation}
    \gamma=\int\gamma_C\,d\sigma(C),\qquad
    \mu_C\coloneqq (\operatorname{proj}_1)_\#\gamma_C,\qquad
    \nu_C\coloneqq (\operatorname{proj}_2)_\#\gamma_C
    \label{eq:ray-disintegration}
\end{equation}
be the disintegration with respect to $r$.  Then, for $\sigma$-a.e. $C$:
\begin{enumerate}
    \item[(i)] $\mu_C$ and $\nu_C$ are both probability measures;
    \item[(ii)] $\mu_C$ is atomless;
    \item[(iii)] $\mu_C\perp\nu_C$;
    \item[(iv)] $\gamma_C$ is forward on the oriented ray and, in an oriented
    coordinate, $F_{\nu_C}\le F_{\mu_C}$;
    \item[(v)] $\mu_C$ and $\nu_C$ have finite first moments. If, in addition,
    \[
        \int\Psi(\|z\|)\,d\mu(z)+
        \int\Psi(\|z\|)\,d\nu(z)<\infty,
        \qquad
        \Psi(r)\coloneqq r\log(1+r),
    \]
    then both conditional $\Psi$-moments are finite for $\sigma$-a.e. $C$.
\end{enumerate}
Finally, $\sigma$ is determined by $(\mu,\Gamma)$, and, relative to this fixed ray measure, the conditional marginals $\mu_C$, $\nu_C$ are determined $\sigma$-a.e. by $\mu$, $\nu$, $\Gamma$, independently of $\gamma$.
\end{lemma}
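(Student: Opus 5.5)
We first set up the disintegration so that each conditional plan $\gamma_C$ lives on the closed ray $C$. By Lemma~\ref{lem:borel-maximal-ray-map} and \eqref{eq:pair-to-ray-midpoint}, $r=\operatorname{cl}\circ\pi_\Gamma\circ m$ with $m(x,y)=(x+y)/2$ is Borel on the $\sigma$-compact set $\Gamma$. Hence $\sigma=r_\#\gamma$ is a finite Borel measure on the Polish space $\mathcal S_c(\mathbb R^n)$, and the disintegration theorem gives probability kernels $\gamma_C$ concentrated on $r^{-1}(C)$ for $\sigma$-a.e. $C$. Since every $(x,y)\in r^{-1}(C)$ has $[[x,y]]\subset C$ and is co-oriented with $C$, $\gamma_C$ is forward in the oriented coordinate of $C$. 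This gives (i) and the forward half of (iv). The inequality $F_{\nu_C}\le F_{\mu_C}$ then follows from forwardness: $\nu_C(]-\infty,s])=\gamma_C(y\le s)\le\gamma_C(x\le s)=\mu_C(]-\infty,s])$.

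For (iii), we fix a Borel $A$ with $\mu(A^c)=\nu(A)=0$. Disintegrating gives $\int\mu_C(A^c)\,d\sigma=\mu(A^c)=0$, and similarly $\int\nu_C(A)\,d\sigma=0$, so $\mu_C\perp\nu_C$ for $\sigma$-a.e. $C$. For (v), we integrate: $\int\!\!\int\|x\|\,d\mu_C\,d\sigma=\int\|x\|\,d\mu<\infty$, so the inner integrals are finite $\sigma$-a.e. Note that in the oriented coordinate $|s|\le\|z\|+\|a_C\|$, with $a_C$ the foot of the line at the origin, so finiteness of $\int\|z\|$ transfers to the one-dimensional first moment. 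For $\Psi$ we use the same argument, together with monotonicity of $\Psi$ and $\Psi(r+c)\le C_c(1+\Psi(r))$ for fixed $c$.

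The main obstacle is (ii), atomlessness, and this is where hypotheses (i)--(iii) of Theorem~\ref{thm:6.1'} enter. The plan follows Ambrosio--Pratelli \cite[Thms.~9.2--9.4]{ambrosio2003existence}.

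\begin{enumerate}
\item By (ii), $\mu$-a.e. source point $x$ with $(x,y)\in\Gamma$ lies in $T_\Gamma$. Hence $\mu$ is concentrated on $T_\Gamma$, where $\pi_\Gamma$ is defined.
\item Since $x\in T_\Gamma$ and $x\in r(x,y)$, uniqueness of the maximal ray through $x$ gives $\operatorname{cl}\pi_\Gamma(x)=r(x,y)$ for $\gamma$-a.e. $(x,y)$. Thus $\sigma=(\operatorname{cl}\circ\pi_\Gamma)_\#\mu$, and $\mu_C$ is the disintegration of $\mu|_{T_\Gamma}$ along $\pi_\Gamma$. This step also yields the final assertion: $\sigma$ and the $\mu_C$ depend only on $(\mu,\Gamma)$. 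Likewise $\nu_C$ is determined by $\nu$ and $\Gamma$, via the analogous disintegration of $\nu$ along the target end, or equivalently as the unique kernel compatible with $\gamma_C$'s target marginal for every $\gamma$ concentrated on $\Gamma$. We will argue this by uniqueness of disintegration of $\mu$ and a symmetric statement for $\nu$ using the right-transport set.
\item On each compact $K_h$ of (iii), $\tau_\Gamma$ is Lipschitz. The ray foliation restricted to $K_h$ is then locally bi-Lipschitz equivalent to a product, so $\mathcal L^n|_{K_h}$ disintegrates into measures absolutely continuous with respect to $\mathcal H^1$ on each ray (the coarea-type statement \cite[Thm.~9.3]{ambrosio2003existence}).
\item Since $\mu\ll\mathcal L^n$ and $\mu(N)=0$, the conditionals $\mu_C$ are $\ll\mathcal H^1\llcorner C$ for $\sigma$-a.e. $C$, and in particular atomless.
\end{enumerate}

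The delicate points are the exhaustion step in the third item, passing from each $K_h$ to the union, and checking that the disintegration of $\mathcal L^n$ is compatible with $\pi_\Gamma$ rather than with a local chart. We plan to handle both by increasing unions and the uniqueness of disintegrations.
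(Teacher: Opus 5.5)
Your arguments for (i), (iii), (iv), (v) and for atomlessness follow the paper's route. That route is: Borel $r=\operatorname{cl}\circ\pi_\Gamma\circ m$, disintegration, the separator argument, Tonelli, and Ambrosio--Pratelli's absolute-continuity theorem on $B=\bigcup_h K_h$, identified with $\mu_C$ by uniqueness of disintegration. Your forwardness argument holds for every pair in $\Gamma$, since the midpoint construction makes $r(x,y)$ co-oriented with $]]x,y[[$.

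One step needs more than you give it. To get $r(x,y)=\operatorname{cl}\pi_\Gamma(x)$, "uniqueness of the maximal ray through $x$" is not enough. That uniqueness concerns open rays, and you only know $x\in\overline R$ with $R=\pi_\Gamma((x+y)/2)$. You must exclude that $x$ is the lower endpoint of $R$ while lying inside a transport ray $]]x',y'[[$ of another direction, and this needs \eqref{eq:18}. A differently oriented witnessing ray would force either $x=x'$, impossible because $x$ is interior to it, or $y=y'$, which makes the two directions equal. Hence the witnessing ray is collinear and co-oriented with $R$, and maximality gives $x\in R$.

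The genuine gap is the final assertion for $\nu_C$. No "symmetric statement for $\nu$ using the right-transport set" is available. Hypothesis (ii) of Theorem~\ref{thm:6.1'} concerns only $\mu$, and in general $\nu$ charges points outside $T_\Gamma$ that are shared endpoints of many rays. For example, let $\mu$ be uniform on a ball and $\nu=\delta_p$ with $p$ outside its closure, with $\Gamma=\overline B\times\{p\}$. All hypotheses hold, yet $\nu(T_\Gamma)=0$, and all of $\nu$ sits at the common forward endpoint of a continuum of rays, where no point-to-ray map exists to disintegrate along.

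The missing idea is to split $\nu$.
\begin{itemize}
\item On $T_\Gamma$, the same no-crossing argument gives $r(x,y)=\operatorname{cl}\pi_\Gamma(y)$. So $\sigma_\nu\coloneqq(\operatorname{cl}\circ\pi_\Gamma)_\#(\nu|_{T_\Gamma})$ satisfies $\sigma_\nu\le\sigma$. The interior conditionals $\nu_C^{\mathrm{int}}$, obtained by disintegrating $\nu|_{T_\Gamma}$ and rescaling by $d\sigma_\nu/d\sigma$, are then canonical.
\item Off $T_\Gamma$, a target of a pair in $\Gamma$ must be the upper endpoint $e^+(C)$ of its ray. Since $\gamma_C$ is a probability measure, mass balance forces $\nu_C=\nu_C^{\mathrm{int}}+\bigl(1-\nu_C^{\mathrm{int}}(\mathbb R^n)\bigr)\delta_{e^+(C)}$.
\end{itemize}
This is also why the statement says $\nu_C$ is determined by $\mu,\nu,\Gamma$ jointly, not by $\nu$ and $\Gamma$ alone as you claim. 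The endpoint mass on each ray is fixed only relative to $\sigma=(\operatorname{cl}\circ\pi_\Gamma)_\#\mu$, that is, through $\mu$.
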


\begin{proof} When applying \cite[Theorems~9.1--9.3]{ambrosio2003existence}, we use
    the following null-set convention.  Adjoin an isolated label $\dagger$ to
    $\mathcal S_c(\mathbb R^n)$.  A Borel ray map defined on a Borel set carrying
    the entire measure under consideration is extended by the value $\dagger$ off
    that set.  The augmented label space remains locally compact and separable,
    the extension is Borel, and its pushforward gives zero mass to $\dagger$.
    For every genuine ray label, its fiber is therefore unchanged.

    Because $\Gamma$ is diagonal-free, the midpoint map
    $m:\Gamma\to T_\Gamma$, $m(x,y)\coloneqq(x+y)/2$, is continuous.
    By Lemma~\ref{lem:borel-maximal-ray-map}, $\pi_\Gamma$ is Borel, and
    $\operatorname{cl}$ is Borel. Hence
    \eqref{eq:pair-to-ray-midpoint} gives
    $r=\operatorname{cl}\circ\pi_\Gamma\circ m$, so $r$ is Borel.\footnote{\cite{ambrosio2003existence} do not verify that $r$ is Borel, needed for their disintegration theorem (Theorem~9.2). \cite[(18.29)]{maggi2023optimal} prove an analogous map Borel, but only after restricting to pairs whose source already lies in the open transport set; our midpoint construction avoids that restriction.\label{fn:borel-ray-map-gap}}
	\cite[Theorem~9.1]{ambrosio2003existence} therefore gives a Borel kernel
\[
    C\longmapsto\gamma_C\in
    \mathcal P(\mathbb R^n\times\mathbb R^n)
\]
concentrated on $r^{-1}(C)$ for $\sigma$-a.e. $C$.  

The operation of taking a marginal is continuous for weak convergence.
  Therefore $C\mapsto\mu_C,\nu_C$ is Borel. Taking the first and second
  marginals in \eqref{eq:ray-disintegration} gives
  \begin{equation}
      \mu=\int\mu_C\,d\sigma(C),\qquad
      \nu=\int\nu_C\,d\sigma(C).
      \label{eq:conditional-marginal-disintegrations}
  \end{equation}
  Since $\gamma_C$ is a probability measure, so are $\mu_C$ and $\nu_C$,
  which proves (i).

We next prove (ii) by applying
\cite[Theorem~9.4]{ambrosio2003existence} to the
disintegration of $\mu$ along open maximal transport rays. We now verify
the hypotheses of that result.
Since $\gamma$ is concentrated on
$\Gamma$, its first marginal is concentrated on $T_\Gamma^l$; assumption
(ii) of Theorem~\ref{thm:6.1'} and diagonal-freeness imply that $\mu$ is
concentrated on $T_\Gamma$.\footnote{This is the inclusion direction of \cite[Proposition~6.1(ii)]{ambrosio2003existence} and \cite[(18.19)]{maggi2023optimal}, which is all we use; the accompanying disjointness clauses in \cite[Remark~18.4]{maggi2023optimal} fail for general transport supports and are not needed here, since diagonal-freeness already makes the motionless set empty.\label{fn:motionless-set-gap}}  Put
\[
    \bar\pi_\Gamma\coloneqq \operatorname{cl}\circ\pi_\Gamma.
\]
For $\gamma$-a.e. $(x,y)$, one has
$r(x,y)=\bar\pi_\Gamma(x) \supseteq [[x,y]]$ and $x \prec_{r(x,y)} y$.  Indeed, if $x$ lies in the interior of a
witnessing transport ray and the ray containing $]]x,y[[$ had a different
orientation, condition~\eqref{eq:18} would force a common source or common
target; the first is impossible because $x$ is interior to the witnessing
ray, while the common-target case contradicts the assumed
difference in orientation. Thus the orientations agree. Since the two segments intersect, they belong to the same maximal transport ray.  Consequently
\[
    \sigma=(\bar\pi_\Gamma)_\#\mu,
\]
and the family $\mu_C$ in \eqref{eq:ray-disintegration} is a disintegration of
$\mu$ with respect to $\bar\pi_\Gamma$.

Let $B\coloneqq \bigcup_hK_h$, with the compact sets $K_h$ from assumption (iii).
Then $B$ is Borel and has full $\mu$-measure.  On $B$, the map
$\pi_\Gamma$ is Borel, $x\in\pi_\Gamma(x)$, distinct open maximal rays are
disjoint, and their direction map is countably Lipschitz because
$\tau_\Gamma|_{K_h}$ is Lipschitz.  Moreover $\mu\ll\mathcal L^n$.
Thus Theorem~9.4 and Remark~9.1 of \cite{ambrosio2003existence}, applied to
$\pi_\Gamma|_B$, give conditional source measures absolutely continuous with
respect to $\mathcal H^1$ on the open rays.  Since the closure map is
one-to-one, the uniqueness theorem for disintegrations
\cite[Theorem~9.2]{ambrosio2003existence} identifies those conditionals with
the $\mu_C$ above.  Hence $\mu_C\ll\mathcal H^1|_C$ and is atomless for
$\sigma$-a.e. $C$, proving (ii).

For (iii), choose a Borel separator $A\subset\mathbb R^n$ with
$\mu(A^c)=0$ and $\nu(A)=0$.  By
\eqref{eq:conditional-marginal-disintegrations} and nonnegativity,
\[
    0=\int\mu_C(A^c)\,d\sigma(C),\qquad
    0=\int\nu_C(A)\,d\sigma(C).
\]
Thus $\mu_C(A^c)=\nu_C(A)=0$ for a.e. $C$, so
$\mu_C\perp\nu_C$.

Fix such a ray $C$, choose any $z_C\in C$, and let $v_C$ be its oriented unit
direction.  The coordinate
\[
    s_C:C\to\mathbb R,\qquad s_C(z)\coloneqq \langle z-z_C,v_C\rangle
\]
identifies $C$ with an interval of $\mathbb R$ and, for $z,z'\in C$, satisfies
\[
    |s_C(z)-s_C(z')|=\|z-z'\|,\qquad
    z\preceq_C z'\ \Longleftrightarrow\ s_C(z)\leq s_C(z')\, .
\]
  Since $[[x,y]] \subseteq r(x,y)$ for every $(x,y) \in \Gamma$ and $x \prec_{r(x,y)} y$ holds for $\gamma$-a.e. $(x,y)$, disintegrating and using
  diagonal-freeness gives
  \[
      \gamma_C\bigl(\{(x,y)\in C\times C:
      s_C(x)<s_C(y)\}\bigr)=1
  \]
  for $\sigma$-a.e. $C$. Thus $\gamma_C$ is forward.

  Write $F_{\mu_C}$ and $F_{\nu_C}$ for the distribution functions of
  $(s_C)_\#\mu_C$ and $(s_C)_\#\nu_C$, respectively. For every $t\in\mathbb R$,
  \[
      F_{\nu_C}(t)
      =\int\mathbf 1_{\{s_C(y)\le t\}}\,d\gamma_C
      \le
      \int\mathbf 1_{\{s_C(x)\le t\}}\,d\gamma_C
      =F_{\mu_C}(t),
  \]
  which proves (iv).

Tonelli's theorem and
\eqref{eq:conditional-marginal-disintegrations} give
\[
    \int\!\left(\int\|z\|\,d\mu_C(z)\right)d\sigma(C)
    =\int\|z\|\,d\mu(z)<\infty,
\]
and the same identity for $\nu$.  Thus the conditional Euclidean first
moments are finite a.e.; also
$|s_C(z)|= \|z-z_C\| \le \|z\|+\|z_C\|$, so the one-dimensional coordinate moments are
finite.  This proves the first part of (v).  Replacing $\|z\|$ by the nonnegative function
$\Psi(\|z\|)$ in the same Tonelli argument proves the second part of (v).

It remains to show that $\sigma$ is determined by $\mu$ and $\Gamma$, and that the conditional marginals $\mu_C$, $\nu_C$ are determined $\sigma$-a.e. by $\mu$, $\nu$, $\Gamma$, independently of $\gamma$.
Put $\bar\pi_\Gamma\coloneqq\operatorname{cl}\circ\pi_\Gamma$. Since
$r(x,y)=\bar\pi_\Gamma(x)$ for $\gamma$-a.e. $(x,y)$, we have
\[
    \sigma=(\bar\pi_\Gamma)_\#\mu \, ,
\]
and therefore $\sigma$ is determined by $\mu$ and $\Gamma$.
Moreover, the conditional source measures $\mu_C$ are consequently the
disintegration of $\mu$ with respect to $\bar\pi_\Gamma$ and are $\sigma$-a.e. canonical
for fixed $\mu,\Gamma$.

For the target mass in $T_\Gamma$, the above implies that
$r(x,y)=\bar\pi_\Gamma(y)$ whenever $y\in T_\Gamma$. Let
$\sigma_\nu\coloneqq(\bar\pi_\Gamma)_\#(\nu|_{T_\Gamma})$. 
The existence of the input plan implies $\sigma_\nu\le\sigma$.
     Indeed, for every Borel ray set $R$,
     \[
         \sigma_\nu(R)
         =
         \gamma\{(x,y):y\in T_\Gamma,\ r(x,y)\in R\}
         \le \sigma(R).
     \]
     Consequently, $h\coloneqq d\sigma_\nu/d\sigma$ has a Borel version
     satisfying $0\le h\le1$.

Disintegrate $\nu|_{T_\Gamma}$ over $\sigma_\nu$ and multiply its
probability conditionals by $d\sigma_\nu/d\sigma$; denote the resulting
subprobability measures by $\nu_C^{\mathrm{int}}$. Equivalently, they are
characterized by
\[
    \int_R\nu_C^{\mathrm{int}}(E)\,d\sigma(C)
    =
    \nu\bigl(E\cap T_\Gamma\cap\bar\pi_\Gamma^{-1}(R)\bigr)
\]
for Borel $E\subset\mathbb R^n$ and Borel ray sets $R$. 
Uniqueness of
disintegration identifies $\nu_C|_{T_\Gamma}$ with
$\nu_C^{\mathrm{int}}$ for $\sigma$-a.e. $C$.

If a target point of a pair in $\Gamma$ lies outside $T_\Gamma$, it is the
upper endpoint $e^+(C)$ of its oriented maximal ray. This case cannot occur
on a ray unbounded in the forward direction. Since $\mu_C$ and $\nu_C$ are
probabilities, mass balance therefore forces
\[
    \nu_C
    =
    \nu_C^{\mathrm{int}}
    +
    \bigl(1-\nu_C^{\mathrm{int}}(\mathbb R^n)\bigr)\delta_{e^+(C)},
\]
with the endpoint term understood as zero when $C$ has no upper
endpoint. This proves the final assertion.

\end{proof}

\begin{lemma}[Measurable dependence of the raywise EC plans]
\label{lem:measurable-raywise-ec}
In the setting of Lemma~\ref{lem:raywise-inheritance}, one can define raywise
transformed plans $\gamma_{C,\#}$ such that
\[
    C\longmapsto\gamma_{C,\#}
\]
is a Borel map into the fixed Polish space
$\mathcal P(\mathbb R^n\times\mathbb R^n)$, and for $\sigma$-a.e. $C$ it is
the unique excursion coupling between $\mu_C$ and $\nu_C$ on the oriented
ray $C$.\footnotemark
\end{lemma}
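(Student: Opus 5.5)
The idea is to make the excursion-coupling construction explicit enough that it becomes a Borel function of the one-dimensional data $(s_C)_\#\mu_C$, $(s_C)_\#\nu_C$ and of the ray coordinates. We first fix a Borel parametrization of the rays. Using the code space $\mathcal R_{\mathrm{code}}$ from the proof of Lemma~\ref{lem:borel-maximal-ray-map}, compose with $\operatorname{cl}$. For $\sigma$-a.e.\ $C$ we take $z_C=a$ and $v_C=v$ from the code $(a,v,\alpha,\beta)$ of its open ray; this replaces the arbitrary choice of $z_C$ in Lemma~\ref{lem:raywise-inheritance}. The map $C\mapsto(a_C,v_C)$ must be Borel. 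Our plan is to obtain it from the truncated endpoints $x_R(C),y_R(C)$ in \eqref{eq:AP-metric}, which are continuous in $d_{\mathrm{AP}}$. For $R$ large enough that the truncation is nondegenerate, $v_C=(y_R-x_R)/\|y_R-x_R\|$, and $a_C$ is the projection of $x_R$ orthogonal to $v_C$. This is the step where the oriented-order convention of footnote~\ref{fn:truncation-metric-gap} is used. The chart $s_C(z)=\langle z-a_C,v_C\rangle$ is then jointly Borel in $(C,z)$. Consequently $C\mapsto \bar\mu_C\coloneqq(s_C)_\#\mu_C$ and $C\mapsto\bar\nu_C\coloneqq(s_C)_\#\nu_C$ are Borel into $\mathcal P(\mathbb R)$; we check this on bounded continuous test functions and use that $C\mapsto\mu_C$ is a Borel kernel.

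Second, we show that the one-dimensional map $(\alpha,\beta)\mapsto\gamma_{\mathrm{EC}}(\alpha,\beta)\in\mathcal P(\mathbb R^2)$ is Borel on the Borel set $\mathcal D$ of pairs that are mutually singular, stochastically ordered, with $\alpha$ atomless. The defining formula is $\int\psi\,d\gamma_{\mathrm{EC}}=\int_0^\infty\sum_i\psi(x^h_{2i-1},x^h_{2i})\,dh$. By a monotone class argument it suffices to show that $(\alpha,\beta)\mapsto\int\psi\,d\gamma_{\mathrm{EC}}$ is Borel for $\psi=\mathbf 1_{]-\infty,p]\times]-\infty,q]}$ with $p,q$ rational. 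We would try to express these quantities through $F_\rho$ alone. The mass sent from $]-\infty,p]$ into $]-\infty,q]$ is the measure of the set of levels $h$ at which an entrance $\le p$ is paired with an exit $\le q$. For $p\le q$ this equals
\[
\int_0^\infty \#\bigl\{\text{excursions of }F_\rho\text{ above }h\text{ entering at or before }p\text{ and exiting at or before }q\bigr\}\,dh .
\]
In terms of running extrema this is $\int_0^\infty \mathbf 1\{\dots\}$ of quantities such as $\inf_{[p,q]}F_\rho$ and $F_\rho(p)$, counted through $h$ up to $\min(F_\rho(p),\inf_{[p,q]}F_\rho)$-type expressions. Because $F_\rho$ is determined by countably many values $F_\rho(t)$, $t\in\mathbb Q$ (it is càdlàg), and each such value is Borel in $(\alpha,\beta)$, the resulting integrals are Borel. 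This identification of the paired mass through infima of $F_\rho$ is the step I expect to be the main obstacle. It has to be checked carefully against Juillet's completed-graph pairing at jumps of $F_\rho$ (atoms of $\nu$) and at the null set of exceptional levels. A fallback is to prove joint Borel measurability of the integrand $(\alpha,\beta,h)\mapsto\sum_i\psi(x^h_{2i-1},x^h_{2i})$ directly, and then apply Tonelli in $h$.

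Finally, we set $\gamma_{C,\#}\coloneqq(s_C^{-1}\times s_C^{-1})_\#\gamma_{\mathrm{EC}}(\bar\mu_C,\bar\nu_C)$ whenever $(\bar\mu_C,\bar\nu_C)\in\mathcal D$. The inverse chart $s_C^{-1}(t)=a_C+tv_C$ is continuous jointly in $(C,t)$, so pushforward preserves Borel dependence. On the complementary Borel set of rays we set $\gamma_{C,\#}$ equal to a fixed measure, for instance $\delta_{(0,0)}$. By Lemma~\ref{lem:raywise-inheritance}(i)--(iv), $(\bar\mu_C,\bar\nu_C)\in\mathcal D$ for $\sigma$-a.e.\ $C$. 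Theorems~\ref{thm:EC-map-gives-a-plan-1D} and~\ref{thm:5.2'} then show that $\gamma_{C,\#}$ is the unique excursion coupling between $\mu_C$ and $\nu_C$ on the oriented ray $C$. It is uniquely characterized as the strictly concave minimizer among forward plans, and since the chart is a translation it does not depend on the choice of $a_C$.
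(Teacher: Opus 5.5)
Your route differs from the paper's and can be completed. As written, though, its central step is left open, and the mechanism you sketch for it would not work. At a fixed level $h$, the number of excursions of $F_\rho$ above $h$ that enter at or before $p$ is an upcrossing count of the whole path on $]-\infty,p]$. It is not a function of $F_\rho(p)$ and $\inf_{[p,q]}F_\rho$, so integrating indicators of running-extrema quantities up to $\min(F_\rho(p),\inf_{[p,q]}F_\rho)$ cannot give the answer.

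The missing idea is to pass to the complement and use the first-marginal identity $\gamma_{\mathrm{EC}}(]-\infty,p]\times\mathbb R)=F_\mu(p)$. Take $p\le q$ and a level $h$. Every pair at level $h$ with entrance $\le p$ and exit $>q$ is an interval $[x^h_{2i-1},x^h_{2i}[$ containing $p$. These intervals are disjoint, so there is at most one such pair. Away from Juillet's null set of levels and the single level $h=F_\rho(p)$, such a pair exists if and only if $h<\inf_{[p,q]}F_\rho$: between consecutive crossings the completed graph lies strictly on one side of $h$, and $\inf_{[p,q]}F_\rho$ is attained as $F_\rho(p)$ or as some $\min(F_\rho(z^-),F_\rho(z))$ with $z\in]p,q]$. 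Integrating in $h$ and using $F_\rho\ge0$ gives
\[
\gamma_{\mathrm{EC}}\bigl(]-\infty,p]\times]-\infty,q]\bigr)=F_\mu(p)-\inf_{[p,q]}F_\rho\qquad(p\le q).
\]
For $p>q$, forwardness gives the value $F_\nu(q)$. As a check, $p=q$ gives $F_\nu(p)$, and $q\to\infty$ gives $F_\mu(p)$. By right-continuity, $\inf_{[p,q]}F_\rho$ is a countable infimum over $\{p,q\}\cup(\mathbb Q\cap]p,q[)$. These values are therefore Borel in $(\alpha,\beta)$, and your monotone-class step then goes through. Your Borel chart $C\mapsto(a_C,v_C)$ via the truncated endpoints, and the pushforward step, are fine.

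The paper never analyzes the level structure. It takes the compact-valued multifunction $K(C)$ of forward couplings of $(\mu_C,\nu_C)$ and shows its graph is Borel. It then proves weak measurability by an Arsenin--Kunugui projection argument. Finally, it applies the measurable maximum theorem to the bounded continuous functional $J_\chi$ with $\chi(r)=1-e^{-r}$, using Theorem~\ref{thm:5.2'} to know that the argmin is the single excursion coupling. That approach needs no ray charts and would work for any raywise rule characterized as the unique minimizer of a continuous functional. It does, however, rely on descriptive-set-theoretic machinery and on the variational characterization.

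Your approach, once the formula above is supplied, is elementary and constructive, using only countable operations on distribution functions. It does not need Theorem~\ref{thm:5.2'} for measurability. It also yields a closed-form joint distribution function for the ordered excursion coupling, which is more than the footnote to this lemma suggests is available.
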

\footnotetext{\cite{ambrosio2003existence} assert the analogous measurable dependence without proof, citing their Appendix. \cite[Theorem~18.7(iii)]{maggi2023optimal} prove it rigorously, but via a closed-form formula specific to the monotone coupling; no such formula exists for the excursion coupling, so we prove this directly below.\label{fn:measurable-gluing-gap}}

\begin{proof}
Discard a Borel $\sigma$-null set so that all conclusions of
Lemma~\ref{lem:raywise-inheritance} hold, and denote the remaining standard
Borel ray space by $Y_0$.  Put $E\coloneqq\mathbb R^n\times\mathbb R^n$.  For
$C\in Y_0$, let
\[
    R_C\coloneqq\{(x,y)\in C\times C:x\preceq_C y\}
\]
be the forward relation and define
\[
    K(C)\coloneqq \left\{\eta\in\mathcal P(E):
    (\operatorname{proj}_1)_\#\eta=\mu_C,\quad
    (\operatorname{proj}_2)_\#\eta=\nu_C,\quad
    \eta(R_C)=1\right\}.
\]
This is precisely the set of forward couplings of the two conditional
marginals. 

For each $C$, the set $K(C)$ is non empty and compact. 
Nonemptiness follows
from $\gamma_C\in K(C)$.  Couplings with the two fixed probability marginals
form a tight, narrowly closed, hence compact subset of $\mathcal P(E)$; since
$R_C$ is closed, the support condition is closed as well.

The graph of the multifunction $K$ is Borel.  Indeed, the marginal maps
\[
    \eta\longmapsto(\operatorname{proj}_i)_\#\eta
\]
are continuous and $C\mapsto\mu_C,\nu_C$ is Borel as constructed in Lemma~\ref{lem:raywise-inheritance}.

For $R\in\mathbb N$, set
  \[
      \overline B_R\coloneqq\{z\in\mathbb R^n:\|z\|\le R\}.
  \]
  The closure correspondence transports the metric
  $d_{\mathrm{AP}}$ defined above in \eqref{eq:AP-metric} to
  $\mathcal S_c(\mathbb R^n)$, with the same notation. For a closed oriented
  ray $C$, write
  $C\cap\overline B_R=[x_R(C),y_R(C)]$, using the same oriented-order,
  empty-intersection, and singleton conventions as above.
  In this metric the joint relation
  \[
      \mathcal R\coloneqq\{(C,x,y):C\in Y_0,\ (x,y)\in R_C\}
  \]
  is Borel.  Indeed, suppose
  $(C_j,x_j,y_j)\to(C,x,y)$ with $(x_j,y_j)\in R_{C_j}$.  Choose $R$ so large
  that $x,y$ and, eventually, $x_j,y_j$ lie in $\overline B_R$.  The ordered endpoints
  of $C_j\cap \overline B_R$ converge to those of $C\cap \overline B_R$; membership of $x_j,y_j$
  and their order between these endpoints therefore pass to the limit.
  Thus the corresponding relation in
  $\mathcal S_c(\mathbb R^n)\times E$ is closed, and its restriction to the
  Borel set $Y_0\times E$ is Borel.

  The standard parameterized-integration
  lemma, proved first for rectangles and then by a monotone-class argument,
  shows that
\[
    (C,\eta)\longmapsto
    \eta(R_C^c)
    =
    \int_E\mathbf 1_{\mathcal R^c}(C,x,y)\,d\eta(x,y)
\]
  is Borel.  Thus the two marginal equalities and the equation
  $\eta(R_C^c)=0$ define a Borel graph for $K$.

  A Borel graph is not by itself the weak measurability hypothesis in the
  measurable minimum theorem, so we record the missing projection step.  If
  $U\subset\mathcal P(E)$ is open, then
  \[
      \operatorname{Gr}K\cap(Y_0\times U)
  \]
  is Borel, and its section at $C$ is $K(C)\cap U$.  This is a
  $K_\sigma$ set (countable union of compact sets) because it is an open subset of the compact metric space
  $K(C)$.  The Arsenin--Kunugui theorem
  \cite[Theorem~18.18]{kechris1995classical} therefore makes its
  projection
  \[
      \{C\in Y_0:K(C)\cap U\ne\varnothing\}
  \]
  Borel.  Hence $K$ is weakly measurable.

Use the fixed selector profile
\[
    \chi(r)\coloneqq1-e^{-r},\qquad r\ge0.
\]
It is bounded, continuous, and strictly concave.  Therefore
\[
    J_\chi(\eta)\coloneqq\int_E\chi(\|x-y\|)\,d\eta(x,y)
\]
  is a bounded continuous function on $\mathcal P(E)$.  The measurable minimum
  theorem \cite[Theorem~18.19]{aliprantisborder2006} now applies to the weakly
  measurable multifunction $K$ with nonempty compact values
and the Borel, fiberwise continuous objective $J_\chi$: the value
\[
    v(C)\coloneqq\min_{\eta\in K(C)}J_\chi(\eta)
\]
is Borel, and
\[
    \operatorname{Gr}M
    \coloneqq
    \{(C,\eta):\eta\in K(C),\ J_\chi(\eta)=v(C)\}
\]
is Borel with nonempty compact sections.  Equivalently, the theorem's
selection part (or Borel uniformization for compact sections) supplies a
Borel selector $C\mapsto\eta_C^*\in M(C)$.  These facts audit the hypotheses
of the measurable-minimum invocation: one fixed Polish decision space, a
Borel feasible graph with compact fibers, and a bounded continuous objective.

After the isometric oriented-coordinate identification from
Lemma~\ref{lem:raywise-inheritance}, Theorem~\ref{thm:5.2'} applies to
$\chi$ and says that $M(C)$ is a singleton, equal to the excursion coupling.
Consequently $\eta_C^*=\gamma_{C,\#}$ for $\sigma$-a.e. $C$, proving the
claimed Borel dependence.
\end{proof}

\begin{proof}[Proof of Theorem~\ref{thm:6.1'}]
We follow the first four steps of the proof of
\cite[Theorem~6.1]{ambrosio2003existence}: disintegration, atomlessness,
raywise rearrangement, and gluing.  The fifth, fixed-point step is absent
because $\Gamma$ is diagonal-free.

\textbf{Step 1. Disintegration.}
Fix the given $\gamma$ and $\Gamma$; all ray conditionals in the construction
are those in \eqref{eq:ray-disintegration} from Lemma~\ref{lem:raywise-inheritance}. Moreover, the lemma guarantees that $\sigma$ is determined by $\mu$ and $\Gamma$, and that the conditional marginals $\mu_C$, $\nu_C$ are determined $\sigma$-a.e. by $\mu$, $\nu$, $\Gamma$, independently of $\gamma$. 

\textbf{Step 2. Atomless conditional measures.}
By
Lemma~\ref{lem:raywise-inheritance}, for $\sigma$-a.e. $C$ the conditional
marginals are mutually singular probability measures, the source is
atomless, the target stochastically dominates the source in the oriented
coordinate, and both have finite first moments.

\textbf{Step 3. Applying the 1D theory.}
Theorems
\ref{thm:EC-map-gives-a-plan-1D} and \ref{thm:5.2'} therefore give a Borel
map $T_{C,\#}:C\to C$ and its graph plan
\[
    \gamma_{C,\#}\coloneqq(\mathrm{Id},T_{C,\#})_\#\mu_C
    \in\Pi(\mu_C,\nu_C)\, .
\]
For every finite-valued concave $\phi$ satisfying
$\phi(d)\ge-K_\phi(1+d)$,
\begin{equation}
    \int_{C\times C}\phi(\|x-y\|)\,d\gamma_{C,\#}
    \le
    \int_{C\times C}\phi(\|x-y\|)\,d\gamma_C.
    \label{ineq:c_on_MTR}
\end{equation}
When $\phi$ is strictly concave, equality in
\eqref{ineq:c_on_MTR} holds if and only if
$\gamma_C=\gamma_{C,\#}$.  Both plans are forward, so the ray coordinate
also gives
\begin{equation}
    \int_{C\times C}\|x-y\|\,d\gamma_{C,\#}
    =
    \int_{C\times C}\|x-y\|\,d\gamma_C\, .
    \label{eq:raywise-distance-equality}
\end{equation}

Uniqueness of $\gamma_{C,\#}$ from $(\mu_C,\nu_C)$ will make the glued plan we construct in Step 4 independent of $\gamma$.

\textbf{Step 4. Gluing the maps.}
Lemma~\ref{lem:measurable-raywise-ec} proves that
$C\mapsto\gamma_{C,\#}$ is Borel as a
$\mathcal P(\mathbb R^n\times\mathbb R^n)$-valued map.  Choose the Borel
representative $T_{C,\#}$ from Step~3 on a fixed Borel set of full
$\sigma$-measure, and make arbitrary choices on the exceptional rays.  Since
the source conditionals form the point-to-ray disintegration identified in
Lemma~\ref{lem:raywise-inheritance},
\[
    \mu_C\bigl(\bar\pi_\Gamma^{-1}(C)\bigr)=1
\]
for $\sigma$-a.e. $C$.  Define one set-theoretic map
$f:\mathbb R^n\to\mathbb R^n$ by
\[
    f(x)\coloneqq T_{\bar\pi_\Gamma(x),\#}(x)
    \quad\text{for }x\in T_\Gamma,
\]
and give $f$ an arbitrary fixed value outside $T_\Gamma$.  For
$\sigma$-a.e. $C$, its restriction to
$\bar\pi_\Gamma^{-1}(C)$ agrees with $T_{C,\#}$ and is therefore
$\mu_C$-measurable.  Moreover,
\[
    (\mathrm{Id},f)_\#\mu_C
    =
    (\mathrm{Id},T_{C,\#})_\#\mu_C
    =
    \gamma_{C,\#}\, .
\]
Thus both conditions 
\cite[Theorem~9.3]{ambrosio2003existence} hold.  That theorem gives a global
Borel representative $t_\#:\mathbb R^n\to\mathbb R^n$ satisfying
$t_\#=T_{C,\#}$ $\mu_C$-a.e. for $\sigma$-a.e. $C$.
Set
\[
    \gamma_\#\coloneqq(\mathrm{Id},t_\#)_\#\mu=\gamma_\#^\Gamma\, .
\]
Disintegration, or equivalently \cite[(37)]{ambrosio2003existence}, yields
\begin{equation}
    \gamma_\#=\int\gamma_{C,\#}\,d\sigma(C).
    \label{eq:glued-ec-disintegration}
\end{equation}
Taking second marginals in this identity gives
\[
    (\operatorname{proj}_2)_\#\gamma_\#
    =
    \int\nu_C\,d\sigma(C)
    =
    \nu\, ,
\]
while its first marginal is $\mu$ by construction.  Thus
$\gamma_\#\in\Pi(\mu,\nu)$ and is induced by the Borel map $t_\#$.
Moreover, \eqref{eq:glued-ec-disintegration} is concentrated ray by ray on
$C\times C$, proving conclusion (b).

Integrating \eqref{eq:raywise-distance-equality} gives
\begin{equation}
    \int\|x-y\|\,d\gamma_\#
    =
    \int\|x-y\|\,d\gamma.
    \label{eq:gammahash-same-as-gamma-on-C1}
\end{equation}
If $\Gamma$ is $c$-cyclically monotone for
$c(x,y)=\|x-y\|$, the sufficiency part of
\cite[Theorem~3.2]{ambrosio2003existence}, applied after normalization by
the common total mass, shows that $\gamma$ is distance-optimal; the displayed
identity then shows that $\gamma_\#$ is distance-optimal as well.  This proves
conclusion (a).

Finally, finite-valued concavity gives an affine upper bound for $\phi$, while
the hypothesis gives an affine lower bound.  The global first moments
therefore make all the following integrals finite and justify disintegration.
Integrating \eqref{ineq:c_on_MTR} gives
\[
    \int\phi(\|x-y\|)\,d\gamma_\#
    \le
    \int\phi(\|x-y\|)\,d\gamma,
\]
which is conclusion (c).  If $\phi$ is strictly concave and global equality
holds, the nonnegative measurable raywise gaps
\[
    D(C)\coloneqq
    \int\phi(\|x-y\|)\,d\gamma_C
    -
    \int\phi(\|x-y\|)\,d\gamma_{C,\#}
\]
have $\int D\,d\sigma=0$.  Hence $D(C)=0$ for $\sigma$-a.e. $C$, and
raywise uniqueness gives $\gamma_C=\gamma_{C,\#}$ a.e.  Comparing
\eqref{eq:ray-disintegration} and
\eqref{eq:glued-ec-disintegration} then yields
$\gamma=\gamma_\#$.  %
\end{proof}


\section{Power costs and the logarithmic secondary problem}
\label{sec:power-costs}

For the power costs
\[
    c_\epsilon(x,y)=\|x-y\|^{1-\epsilon},
    \qquad 0<\epsilon<1,
\]
the first-order profile at $\epsilon=0$ is
\[
    g(r)\coloneqq -r\log r,\qquad r\ge0,
\]
where throughout this section we use the convention $0\log0=0$.  The
negative part of $g$ grows faster than linearly.  We therefore use the
Young function
\[
    \Psi(r)\coloneqq r\log(1+r),\qquad r\ge0,
\]
and write
\[
    \mathcal M_{1+}(\mathbb R^n)
    \coloneqq 
    \left\{\eta\in\mathcal M_+(\mathbb R^n):
    \eta(\mathbb R^n)<\infty,\quad
    \int_{\mathbb R^n}\Psi(\|x\|)\,d\eta(x)<\infty\right\}.
\]
Thus $\mathcal P_{1+}(\mathbb R^n)
\coloneqq \mathcal M_{1+}(\mathbb R^n)\cap\mathcal P(\mathbb R^n)$.

\begin{lemma}[Two-point Orlicz estimate]
\label{lem:power-two-point-orlicz}
There is a universal constant $C_\Psi$ such that, for all
$x,y\in\mathbb R^n$,
\[
    \Psi(\|x-y\|)
    \le
    C_\Psi\bigl(1+\Psi(\|x\|)+\Psi(\|y\|)\bigr).
\]
\end{lemma}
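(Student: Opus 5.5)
The argument rests on two properties of $\Psi(r)=r\log(1+r)$: it is nondecreasing on $[0,\infty[$, and it satisfies a $\Delta_2$-type doubling bound. By the triangle inequality $\|x-y\|\le\|x\|+\|y\|\le 2\max\{\|x\|,\|y\|\}$. Setting $m\coloneqq\max\{\|x\|,\|y\|\}$, monotonicity gives
\[
    \Psi(\|x-y\|)\le\Psi(2m).
\]
It therefore suffices to bound $\Psi(2m)$ by a constant multiple of $1+\Psi(m)$. Since $\Psi(m)\le\Psi(\|x\|)+\Psi(\|y\|)$ (both terms are nonnegative and one of them equals $\Psi(m)$), this yields the claim.

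For the doubling step, write
\[
    \Psi(2m)=2m\log(1+2m).
\]
Because $1+2m\le(1+m)^2$, we have $\log(1+2m)\le 2\log(1+m)$. Hence
\[
    \Psi(2m)\le 4m\log(1+m)=4\Psi(m).
\]
This holds for every $m\ge0$, so no case split between small and large $m$ is needed, and the constant term $1$ is not even required. Combining the two displays gives
\[
    \Psi(\|x-y\|)\le 4\bigl(\Psi(\|x\|)+\Psi(\|y\|)\bigr),
\]
so one may take $C_\Psi=4$.

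The only point that needs checking is monotonicity of $\Psi$. This is immediate: $\Psi'(r)=\log(1+r)+r/(1+r)\ge0$. The rest is elementary, so I do not expect a genuine obstacle. The main care is to pass through the maximum $m$ rather than using convexity of $\Psi$ on the sum $\|x\|+\|y\|$ directly, although the convexity route also works, since it gives $\Psi(\|x\|+\|y\|)\le\tfrac12(\Psi(2\|x\|)+\Psi(2\|y\|))$ and then the same doubling bound applies.
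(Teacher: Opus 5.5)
Your proof is correct and is essentially the paper's argument: reduce via the triangle inequality and monotonicity of $\Psi$ to a doubling bound. The paper uses a convexity step and the weaker bound $\Psi(2t)\le 4\Psi(t)+2\log 2$ (from $1+2t\le 2(1+t)$ and a case split at $t=1$), giving $C_\Psi=2$. Your sharper inequality $\log(1+2m)\le 2\log(1+m)$ removes both the additive constant and the case split. If you combine it with the paper's convexity step instead of the maximum, you even get $\Psi(\|x-y\|)\le 2\bigl(\Psi(\|x\|)+\Psi(\|y\|)\bigr)$.
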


\begin{proof}
The function $\Psi$ is increasing and convex.  If $a,b\ge0$, convexity gives
\[
    \Psi(a+b)
    \le \frac12\Psi(2a)+\frac12\Psi(2b).
\]
Moreover,
\[
    \Psi(2t)
    =2t\log(1+2t)
    \le 2\Psi(t)+2t\log2.
\]
For $t\le1$ we have $t\log2\le\log2$, while for $t\ge1$ we have
$t\log2\le\Psi(t)$.  Hence
\[
    \Psi(2t)\le4\Psi(t)+2\log2
\]
for every $t\ge0$.  Taking $a=\|x\|$, $b=\|y\|$, and using
$\|x-y\|\le a+b$ proves the claim, for example with $C_\Psi=2$.
\end{proof}

\begin{lemma}[Uniform integrability for fixed marginals]
\label{lem:power-fixed-marginal-ui}
Let $\mu,\nu\in\mathcal M_{1+}(\mathbb R^n)$ have equal mass.  Then
\[
    (x,y)\longmapsto\Psi(\|x-y\|)
\]
is uniformly integrable over $\Pi(\mu,\nu)$.  Consequently
$|g(\|x-y\|)|$ is uniformly integrable over $\Pi(\mu,\nu)$, and both
functions have uniformly bounded integrals on that coupling class.
\end{lemma}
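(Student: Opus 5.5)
The plan is to dominate $\Psi(\|x-y\|)$ by a function of the form $G(x)+G(y)$, where $G$ is a single function that is uniformly integrable against $\mu$ and $\nu$. Since every coupling has the same marginals, the tails of this dominating function do not depend on the coupling.

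Set $G(z)\coloneqq C_\Psi(1+\Psi(\|z\|))$. By Lemma~\ref{lem:power-two-point-orlicz}, for every $\gamma\in\Pi(\mu,\nu)$ we have $\Psi(\|x-y\|)\le G(x)+G(y)$ pointwise. In particular
\[
    \int\Psi(\|x-y\|)\,d\gamma\le\int G\,d\mu+\int G\,d\nu<\infty,
\]
which gives the uniform bound on integrals. For uniform integrability, fix $K>0$ and use the elementary inequality $(a+b)\mathbf 1_{\{a+b>K\}}\le 2a\mathbf 1_{\{a>K/2\}}+2b\mathbf 1_{\{b>K/2\}}$ for $a,b\ge0$, which holds since $a+b>K$ forces $\max(a,b)>K/2$. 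Integrating against $\gamma$ and using the marginal constraints yields
\[
    \sup_{\gamma\in\Pi(\mu,\nu)}\int_{\{\Psi(\|x-y\|)>K\}}\Psi(\|x-y\|)\,d\gamma
    \le 2\int_{\{G>K/2\}}G\,d\mu+2\int_{\{G>K/2\}}G\,d\nu.
\]
The right-hand side tends to $0$ as $K\to\infty$ by dominated convergence, since $G\in L^1(\mu)\cap L^1(\nu)$ and the marginals are finite measures.

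For $g(r)=-r\log r$, I will compare $|g|$ with $\Psi$. For $r\ge1$, $|g(r)|=r\log r\le r\log(1+r)=\Psi(r)$. For $r\in[0,1]$, $|g(r)|\le e^{-1}$. Hence $|g(r)|\le e^{-1}+\Psi(r)$ for all $r\ge0$. A bounded constant is uniformly integrable over couplings of fixed finite mass, and a sum of uniformly integrable families is uniformly integrable. Therefore $|g(\|x-y\|)|$ is uniformly integrable with uniformly bounded integrals, each bounded by $e^{-1}\mu(\mathbb R^n)$ plus the bound above.

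There is no serious obstacle in this argument. The only step that needs care is transferring tails from $(x,y)$ to the marginals through the splitting inequality, and this works precisely because the dominating function is a sum of marginal terms.
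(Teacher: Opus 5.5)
Your proof is correct. It shares the paper's basic strategy: dominate $\Psi(\|x-y\|)$ by a separable sum of marginal functions, so that the fixed marginals control every coupling uniformly. The decomposition of the tail is different, however.

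\textbf{The paper's route.} It dominates by $\tfrac12\bigl(\Psi(2\|x\|)+\Psi(2\|y\|)\bigr)$ and cuts along the distance event $\{\|x-y\|>R\}\subset\{\|x\|>R/2\}\cup\{\|y\|>R/2\}$. Each of these events is defined by one coordinate, while the dominating function has a summand in each coordinate. This produces cross terms such as $\int_{\{\|y\|>R/2\}}\Psi(2\|x\|)\,d\gamma$. The paper handles them with an auxiliary truncation level $L$, chosen before $R$, and then converts the distance-tail estimate into the uniform-integrability criterion using the monotonicity of $\Psi$.

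\textbf{Your route.} You cut according to which summand of $G(x)+G(y)$ exceeds $K/2$, so each event matches its own integrand. Every term is then directly a marginal tail, $\int_{\{G>K/2\}}G\,d\mu$ or $\int_{\{G>K/2\}}G\,d\nu$. This removes the cross terms and the two-parameter choice, and gives $\lim_{K\to\infty}\sup_{\gamma\in\Pi(\mu,\nu)}\int_{\{\Psi(\|x-y\|)>K\}}\Psi(\|x-y\|)\,d\gamma=0$ at once. Both arguments rest on the same fact: the law under $\gamma$ of a function of $x$ alone, or of $y$ alone, is fixed by the marginals. Yours uses it more economically.

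\textbf{One implicit step.} You rely on $t\mapsto t\mathbf 1_{\{t>K\}}$ being nondecreasing on $[0,\infty[$. This is what lets you pass from $\Psi(\|x-y\|)\mathbf 1_{\{\Psi(\|x-y\|)>K\}}$ to $(G(x)+G(y))\mathbf 1_{\{G(x)+G(y)>K\}}$ before applying your splitting inequality. It is trivially true and worth stating.

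Your treatment of $|g|\le e^{-1}+\Psi$ matches the paper's.
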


\begin{proof}
We give the marginal-tail argument explicitly.  Put
$a=\|x\|$, $b=\|y\|$,
\[
    H_\mu(x)\coloneqq\Psi(2\|x\|),\qquad
    H_\nu(y)\coloneqq\Psi(2\|y\|).
\]
The scaling estimate in the preceding proof shows that
$H_\mu\in L^1(\mu)$ and $H_\nu\in L^1(\nu)$.  Also,
\[
    \Psi(\|x-y\|)
    \le\Psi(a+b)
    \le\frac12\bigl(H_\mu(x)+H_\nu(y)\bigr).
\]
Let $A_R\coloneqq\{(x,y):\|x-y\|>R\}$.  Since
\[
    A_R\subset\{a>R/2\}\cup\{b>R/2\},
\]
the integral of $\Psi(\|x-y\|)$ over $A_R$ is bounded by one half of the
sum of four terms.  Two of them are the marginal tails
\[
    \int_{\{a>R/2\}}H_\mu\,d\mu,
    \qquad
    \int_{\{b>R/2\}}H_\nu\,d\nu.
\]
For either cross term, and every $L>0$, the fixed marginals give
\begin{align*}
    \int_{\{b>R/2\}}H_\mu(x)\,d\gamma(x,y)
    &\le
    \int_{\{H_\mu>L\}}H_\mu\,d\mu
    +L\,\nu(\{b>R/2\}),\\
    \int_{\{a>R/2\}}H_\nu(y)\,d\gamma(x,y)
    &\le
    \int_{\{H_\nu>L\}}H_\nu\,d\nu
    +L\,\mu(\{a>R/2\}).
\end{align*}
These bounds hold for every $\gamma\in\Pi(\mu,\nu)$.  First choose $L$
so that the two $H$-tails are small, and then choose $R$ so that the
remaining marginal tails and probabilities are small.  It follows that
\[
    \lim_{R\to\infty}
    \sup_{\gamma\in\Pi(\mu,\nu)}
    \int_{A_R}\Psi(\|x-y\|)\,d\gamma=0.
\]
Since $\Psi$ is increasing and tends to infinity, this is precisely the
tail criterion for uniform integrability.  Notice that the argument uses
the fixed marginal tails, not merely a uniform $L^1$ bound.

Finally,
\[
    |g(r)|\le\frac1e\quad(0\le r\le1),
    \qquad
    |g(r)|=r\log r\le\Psi(r)\quad(r\ge1).
\]
The uniform-integrability assertion for $|g(\|x-y\|)|$ follows from the
one just proved.  Uniform boundedness of the integrals follows as well.
\end{proof}

For $\gamma\in\Pi(\mu,\nu)$ define the logarithmic secondary functional
\[
    J(\gamma)\coloneqq\int_{\mathbb R^n\times\mathbb R^n}
    g(\|x-y\|)\,d\gamma(x,y).
\]

\begin{lemma}[Continuity of the logarithmic secondary functional]
\label{lem:power-J-continuity}
Under the hypotheses of Lemma~\ref{lem:power-fixed-marginal-ui},
$J$ is finite-valued and weakly continuous on $\Pi(\mu,\nu)$.  In
particular, it is weakly lower semicontinuous.
\end{lemma}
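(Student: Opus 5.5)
Finiteness is immediate from Lemma~\ref{lem:power-fixed-marginal-ui}: for every $\gamma\in\Pi(\mu,\nu)$ we have $\int|g(\|x-y\|)|\,d\gamma\le\sup_{\eta\in\Pi(\mu,\nu)}\int|g(\|x-y\|)|\,d\eta<\infty$, so $J(\gamma)$ is a well-defined real number. For continuity, I will fix a sequence $\gamma_k\to\gamma$ weakly in $\Pi(\mu,\nu)$. Sequences suffice because the narrow topology on this compact set of fixed-mass measures is metrizable, as already used in the proof of Theorem~\ref{thm:main-theorem}. The plan is a truncation argument: split $g$ into a bounded continuous part and a tail that is uniformly small by uniform integrability.

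Concretely, for $R\ge1$ I will set $g_R(r)\coloneqq g(\min\{r,R\})$. This is continuous and bounded on $[0,\infty[$ because $g$ is continuous with $g(0)=0$ under the convention $0\log0=0$. Hence $(x,y)\mapsto g_R(\|x-y\|)$ is bounded and continuous on $\mathbb R^n\times\mathbb R^n$, and weak convergence gives $\int g_R(\|x-y\|)\,d\gamma_k\to\int g_R(\|x-y\|)\,d\gamma$ for each fixed $R$. The difference $g-g_R$ vanishes on $\{r\le R\}$, and for $r>R\ge1$ it satisfies
\[
    |g(r)-g_R(r)|\le|g(r)|+R\log R\le 2r\log r\le 2\Psi(r).
\]
It follows that, for every $\eta\in\Pi(\mu,\nu)$,
\[
    \Bigl|\int g\,d\eta-\int g_R\,d\eta\Bigr|
    \le 2\sup_{\eta'\in\Pi(\mu,\nu)}\int_{\{\|x-y\|>R\}}\Psi(\|x-y\|)\,d\eta'
    \eqqcolon 2\omega(R).
\]
The displayed limit in the proof of Lemma~\ref{lem:power-fixed-marginal-ui} says exactly that $\omega(R)\to0$ as $R\to\infty$.

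Combining these estimates yields
\[
    \limsup_k|J(\gamma_k)-J(\gamma)|\le 4\omega(R)
\]
for every $R\ge1$. Letting $R\to\infty$ gives $J(\gamma_k)\to J(\gamma)$, which is weak continuity. Lower semicontinuity is then trivial.

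The only delicate point is the tail bound. It is essential that the bound is uniform over the whole class $\Pi(\mu,\nu)$, which contains both $\gamma_k$ and $\gamma$, and is not merely a bound for each plan separately. This uniformity is precisely what the fixed-marginal uniform integrability of Lemma~\ref{lem:power-fixed-marginal-ui} supplies, so I expect no real obstacle beyond checking the elementary inequality $|g-g_R|\le2\Psi$ on $\{r>R\}$.
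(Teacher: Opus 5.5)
Your proof is correct and follows essentially the same route as the paper. Both arguments truncate $g$ to a bounded continuous function, pass to the limit using weak convergence, and control the remainder uniformly over $\Pi(\mu,\nu)$ via Lemma~\ref{lem:power-fixed-marginal-ui}. The only difference is cosmetic: the paper truncates the values, $g_M=\max\{g,-M\}$, and uses uniform integrability of $|g(\|x-y\|)|$, whereas you truncate the argument, $g(\min\{r,R\})$, and invoke the $\Psi$-tail limit from that lemma's proof directly.
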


\begin{proof}
For $M>0$ set
\[
    g_M(r)\coloneqq\max\{g(r),-M\}.
\]
Since $g$ is continuous and bounded above by $1/e$, $g_M$ is bounded and
continuous.  Moreover, uniformly over $\gamma\in\Pi(\mu,\nu)$,
\[
    0\le
    \int\bigl(g_M(\|x-y\|)-g(\|x-y\|)\bigr)\,d\gamma
    \le
    \int_{\{|g(\|x-y\|)|>M\}}|g(\|x-y\|)|\,d\gamma,
\]
and the right-hand side tends to zero by
Lemma~\ref{lem:power-fixed-marginal-ui}.  If $\gamma_k$ converges weakly to
$\gamma$, then for fixed $M$,
\[
    \int g_M(\|x-y\|)\,d\gamma_k
    \longrightarrow
    \int g_M(\|x-y\|)\,d\gamma.
\]
Letting $M\to\infty$ uniformly in $k$ proves $J(\gamma_k)\to J(\gamma)$.
\end{proof}

\begin{lemma}[Power difference quotients]
\label{lem:power-quotient-bounds}
For $0<\epsilon\le1/2$, define
\[
    q_\epsilon(r)\coloneqq\frac{r^{1-\epsilon}-r}{\epsilon},
    \qquad r\ge0.
\]
Then
\[
    0\le q_\epsilon(r)\le\frac2e
    \quad(0\le r\le1),
    \qquad
    |q_\epsilon(r)|\le r\log r
    \quad(r\ge1).
\]
In addition,
\[
    q_\epsilon(r)\ge g(r)\quad\text{for all }r\ge0,
    \qquad
    q_\epsilon(r)\longrightarrow g(r)
    \quad\text{as }\epsilon\to0^+.
\]
\end{lemma}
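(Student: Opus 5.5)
The statement is an elementary one-variable fact about $q_\epsilon(r)=(r^{1-\epsilon}-r)/\epsilon=r\,(r^{-\epsilon}-1)/\epsilon$. I will write $r^{-\epsilon}=e^{-\epsilon\log r}$ and set $s\coloneqq -\log r$ for $r>0$, so that
\[
    q_\epsilon(r)=r\,\frac{e^{\epsilon s}-1}{\epsilon}.
\]
The case $r=0$ is trivial: $q_\epsilon(0)=0=g(0)$.

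\textbf{Lower bound and convergence.} I will use the elementary inequality $e^{u}-1\ge u$ with $u=\epsilon s$. This gives $q_\epsilon(r)\ge r s=-r\log r=g(r)$ for all $r>0$. For the limit, the map $\epsilon\mapsto (e^{\epsilon s}-1)/\epsilon$ tends to $s$ as $\epsilon\to0^+$, which is the derivative of the exponential at $0$. Hence $q_\epsilon(r)\to g(r)$ pointwise. Equivalently, $\epsilon\mapsto r^{1-\epsilon}$ is convex in $\epsilon$, so its secant slopes from $0$ are nondecreasing and converge to the right-derivative $-r\log r$.

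\textbf{Bounds on $[0,1]$.} For $0<r\le1$ we have $s\ge0$, so $e^{\epsilon s}-1\ge0$ and $q_\epsilon\ge0$. For the upper bound I will use the monotonicity of the secant slopes in $\epsilon$ just noted. This gives $q_\epsilon(r)\le q_{1/2}(r)=2(\sqrt r-r)$ for $\epsilon\le 1/2$. Maximizing $\sqrt r-r$ over $[0,1]$ gives the value $1/4$ at $r=1/4$, so $q_\epsilon\le 1/2\le 2/e$. If I preferred a direct route, the mean value theorem gives $e^{\epsilon s}-1\le \epsilon s e^{\epsilon s}$, hence $q_\epsilon(r)\le s\,r^{1-\epsilon}\le -r^{1/2}\log r\cdot r^{0}$, which is bounded by $2/e$ since $\max_{r\in]0,1]} (-\sqrt r\log r)=2/e$. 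That is where the stated constant comes from; either route works.

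\textbf{Bounds for $r\ge1$.} Now $s\le0$, so $q_\epsilon\le0$ and $|q_\epsilon(r)|=r(1-e^{\epsilon s})/\epsilon$. Using $1-e^{-v}\le v$ with $v=-\epsilon s\ge0$ gives $|q_\epsilon(r)|\le r\,(-s)=r\log r$.

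There is no real obstacle here. The only point needing care is to pick the right elementary exponential inequality in each regime, $e^u-1\ge u$ for the lower bound and $1-e^{-v}\le v$ for the large-$r$ bound, and to get the $2/e$ constant on $[0,1]$. For the latter I would use the $e^{\epsilon s}\le r^{-1/2}$ estimate, valid since $\epsilon\le 1/2$.
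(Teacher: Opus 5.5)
Your proof is correct and follows essentially the paper's argument. You rewrite $q_\epsilon(r)=r\,(e^{\epsilon s}-1)/\epsilon$, use $e^u\ge 1+u$ for $q_\epsilon\ge g$, and on $]0,1]$ use the mean-value bound $q_\epsilon(r)\le r^{1-\epsilon}|\log r|\le\sqrt r\,|\log r|\le 2/e$; your alternative secant-slope route via convexity in $\epsilon$ even gives the sharper constant $1/2$. For $r\ge1$, your inequality $1-e^{-v}\le v$ is the same estimate that the paper gets from the mean value theorem as $r^{1-\theta}\log r\le r\log r$.
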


\begin{proof}
For $0<r\le1$, the mean value theorem in the variable $\epsilon$ gives,
for some $\theta\in]0,\epsilon[$,
\[
    q_\epsilon(r)=r^{1-\theta}|\log r|
    \le r^{1/2}|\log r|\le\frac2e.
\]
For $r\ge1$, the same argument gives
\[
    |q_\epsilon(r)|=r^{1-\theta}\log r\le r\log r.
\]
The assertions at $r=0$ follow from the stated convention.  Finally,
$e^s\ge1+s$, applied with $s=-\epsilon\log r$, yields
\[
    q_\epsilon(r)
    =r\,\frac{e^{-\epsilon\log r}-1}{\epsilon}
    \ge-r\log r=g(r).
\]
The pointwise limit follows by differentiating $r^{1-\epsilon}$ at
$\epsilon=0$.
\end{proof}

\begin{prop}[Gamma limits and power-cost selection]
\label{prop:power-gamma-selection}
Let $\mu,\nu\in\mathcal M_{1+}(\mathbb R^n)$ be mutually singular finite
positive Borel measures with equal positive mass, and assume
$\mu\ll\mathcal L^n$.  On $\Pi(\mu,\nu)$, with its weak topology, define
\[
    d(x,y)\coloneqq\|x-y\|,\qquad
    F_\epsilon(\gamma)\coloneqq\int d^{1-\epsilon}\,d\gamma,\qquad
    F(\gamma)\coloneqq\int d\,d\gamma,
\]
and
\[
    m\coloneqq\min_{\eta\in\Pi(\mu,\nu)}F(\eta),\qquad
    F'_\epsilon(\gamma)\coloneqq\frac{F_\epsilon(\gamma)-m}{\epsilon}.
\]
Keep $J$ as the finite-valued functional defined above, and define the
extended functional
\[
    F'(\gamma)\coloneqq
    \begin{cases}
        J(\gamma),&\gamma\in\Pi_1(\mu,\nu),\\
        +\infty,&\gamma\notin\Pi_1(\mu,\nu).
    \end{cases}
\]
Then
\[
    F_\epsilon\xrightarrow{\Gamma}F,
    \qquad
    F'_\epsilon\xrightarrow{\Gamma}F'
\]
as $\epsilon\to0^+$.  The two families $F_\epsilon$ and $F'_\epsilon$ are equicoercive.  Consequently,
every weak limit point of minimizers of $F_\epsilon$ belongs to
$\Pi_1(\mu,\nu)$ and minimizes $J$ there.  This secondary minimum is
attained and finite.
\end{prop}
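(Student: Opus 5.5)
The proof follows the template of Proposition~\ref{prop:7.1'}, with the linear lower bound on the profile replaced by the uniform integrability of Lemma~\ref{lem:power-fixed-marginal-ui}.

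\emph{Step 1: $F_\epsilon\xrightarrow{\Gamma}F$.} For the liminf, write $F_\epsilon(\gamma)=F(\gamma)+\epsilon\int q_\epsilon(d)\,d\gamma$. Lemma~\ref{lem:power-quotient-bounds} gives $q_\epsilon\ge g$, so
\[
    F_\epsilon(\gamma_\epsilon)\ge F(\gamma_\epsilon)+\epsilon J(\gamma_\epsilon).
\]
By Lemma~\ref{lem:power-fixed-marginal-ui}, $|J|$ is uniformly bounded on $\Pi(\mu,\nu)$, so $\epsilon J(\gamma_\epsilon)\to0$. The functional $F$ is weakly lower semicontinuous, by Lemma~\ref{lem:lsc-marginal-lower-control} with $h=d\ge0$. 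For the limsup, take the constant recovery sequence. Since $d^{1-\epsilon}\le 1+d$, dominated convergence gives $F_\epsilon(\gamma_0)\to F(\gamma_0)$.

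\emph{Step 2: $F'_\epsilon\xrightarrow{\Gamma}F'$.} For the liminf, use $q_\epsilon\ge g$ and $F(\gamma_\epsilon)\ge m$ to get
\[
    F'_\epsilon(\gamma_\epsilon)\ge \frac{F(\gamma_\epsilon)-m}{\epsilon}+J(\gamma_\epsilon)\ge -\sup_{\Pi(\mu,\nu)}|J|>-\infty.
\]
Now argue as in Proposition~\ref{prop:7.1'}. If the liminf is finite, pass to a subsequence on which $F'_{\epsilon_j}(\gamma_{\epsilon_j})$ is bounded above. Then $F(\gamma_{\epsilon_j})\le m+\epsilon_j\bigl(F'_{\epsilon_j}(\gamma_{\epsilon_j})+\sup|J|\bigr)\to m$, and lower semicontinuity of $F$ gives $\gamma_0\in\Pi_1(\mu,\nu)$. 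Lemma~\ref{lem:power-J-continuity} gives $J(\gamma_\epsilon)\to J(\gamma_0)$, hence $\liminf F'_\epsilon(\gamma_\epsilon)\ge J(\gamma_0)=F'(\gamma_0)$.

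For the limsup, take $\gamma_0\in\Pi_1(\mu,\nu)$ and the constant sequence, so that $F'_\epsilon(\gamma_0)=\int q_\epsilon(d)\,d\gamma_0$. The integrand converges pointwise to $g(d)$ by Lemma~\ref{lem:power-quotient-bounds}. It is dominated by $2/e+\Psi(d)$, because $r\log r\le\Psi(r)$. This bound is $\gamma_0$-integrable by Lemma~\ref{lem:power-two-point-orlicz} and the finite $\Psi$-moments, so dominated convergence applies.

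\emph{Step 3: conclusion.} Equicoercivity is automatic, since all functionals live on the weakly compact set $\Pi(\mu,\nu)$. Minimizers of $F'_\epsilon$ and $F_\epsilon$ coincide because $F'_\epsilon$ is an increasing affine transform of $F_\epsilon$. The fundamental theorem of $\Gamma$-convergence \cite[Theorem~4.1]{ambrosio2003existence} then shows two things: every weak limit point of the minimizers of $F_\epsilon$ minimizes $F$, so it lies in $\Pi_1(\mu,\nu)$, and it minimizes $F'$, hence $J$ over $\Pi_1(\mu,\nu)$.

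Attainment and finiteness of the secondary minimum follow from three facts:
\begin{itemize}
    \item $\Pi_1(\mu,\nu)$ is nonempty and weakly compact, being closed as the minimizing set of the lower semicontinuous functional $F$;
    \item $J$ is continuous by Lemma~\ref{lem:power-J-continuity};
    \item $J$ is finite-valued and uniformly bounded by Lemma~\ref{lem:power-fixed-marginal-ui}.
\end{itemize}

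\emph{Main obstacle.} The only delicate point is the liminf in Step 2. The profile $g$ has no linear lower bound, so Lemma~\ref{lem:lsc-marginal-lower-control} cannot be used. The fix is to use instead the continuity of $J$ from Lemma~\ref{lem:power-J-continuity} together with the uniform bound on $|J|$, which both rest on the fixed-marginal uniform integrability of $\Psi(\|x-y\|)$ from Lemma~\ref{lem:power-fixed-marginal-ui}.
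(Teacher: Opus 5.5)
Your proposal is correct and follows essentially the paper's route. It uses the same decomposition $F'_\epsilon=(F-m)/\epsilon+\int q_\epsilon(d)\,d\gamma$, the same liminf argument via $q_\epsilon\ge g$ plus the uniform bound and weak continuity of $J$, and the same constant recovery sequence dominated by $2/e+\Psi(d)$. The only differences are cosmetic: the paper gets the first $\Gamma$-limit from uniform convergence $F_\epsilon\to F$ together with weak continuity of $F$, and it closes with a direct recovery-sequence argument instead of citing the fundamental theorem of $\Gamma$-convergence.
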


\begin{proof}
The fixed-marginal coupling class is weakly compact.  Indeed, it is tight
because both marginals are fixed finite measures, and it is weakly closed.
Also, the log moment implies a finite first moment.

We first record that $F$ is weakly continuous on this coupling class.
The functions $d\wedge R$ are bounded and continuous, while
\[
    \sup_{\gamma\in\Pi(\mu,\nu)}
    \int_{\{d>R\}}d\,d\gamma
    \le
    \frac{1}{\log(1+R)}
    \sup_{\gamma\in\Pi(\mu,\nu)}\int\Psi(d)\,d\gamma
    \xrightarrow{R\to\infty}0 \, ,
\]
using Lemma~\ref{lem:power-fixed-marginal-ui}.
Thus bounded truncation proves the claimed continuity.  For
$0<\epsilon\le1/2$, Lemma~\ref{lem:power-quotient-bounds} gives
\[
    |q_\epsilon(d)|\le\frac2e+\Psi(d).
\]
The right-hand side has a uniformly bounded integral over
$\Pi(\mu,\nu)$, and therefore
\[
    \sup_{\gamma\in\Pi(\mu,\nu)}
    |F_\epsilon(\gamma)-F(\gamma)|
    \le
    \epsilon\sup_{\gamma\in\Pi(\mu,\nu)}
    \int|q_\epsilon(d)|\,d\gamma
    \longrightarrow0.
\]
Hence $F_\epsilon$ converges uniformly to the continuous functional $F$,
which proves both inequalities in the first Gamma limit, including the
constant recovery sequence.

We turn to the rescaled functionals.  First note that
\[
    F'_\epsilon(\gamma)
    =
    \frac{F(\gamma)-m}{\epsilon}
    +\int q_\epsilon(d)\,d\gamma.
\]
Since $F(\gamma)\ge m$ and $q_\epsilon\ge g$ by Lemma~\ref{lem:power-quotient-bounds}, the functionals
$F'_\epsilon$ have a common finite lower bound by
Lemma~\ref{lem:power-fixed-marginal-ui}.

Let $\gamma_\epsilon\to\gamma$ weakly.  We spell out the finite-liminf
step.  If
\[
    \liminf_{\epsilon\to0^+}F'_\epsilon(\gamma_\epsilon)<+\infty,
\]
the common lower bound lets us choose a subsequence $\epsilon_k\to0$ on
which the displayed liminf is a finite real number.  Along this subsequence,
\[
    F_{\epsilon_k}(\gamma_{\epsilon_k})
    =m+\epsilon_kF'_{\epsilon_k}(\gamma_{\epsilon_k})
    \longrightarrow m.
\]
The first Gamma-liminf inequality gives
\[
    F(\gamma)\le m.
\]
By the definition of $m$, equality holds, so
$\gamma\in\Pi_1(\mu,\nu)$.  Furthermore, using Lemma~\ref{lem:power-quotient-bounds},
\[
    F'_\epsilon(\gamma_\epsilon)
    \ge\int g(d)\,d\gamma_\epsilon=J(\gamma_\epsilon).
\]
Lemma~\ref{lem:power-J-continuity} now yields
\[
    \liminf_{\epsilon\to0^+}F'_\epsilon(\gamma_\epsilon)
    \ge J(\gamma)=F'(\gamma).
\]
If the liminf is infinite the inequality is automatic.  In particular,
when $\gamma\notin\Pi_1(\mu,\nu)$, a finite liminf is impossible and the
Gamma-liminf is $+\infty=F'(\gamma)$.

For the recovery sequence, let $\gamma\in\Pi_1(\mu,\nu)$ and take the
constant sequence.  Then
\[
    F'_\epsilon(\gamma)=\int q_\epsilon(d)\,d\gamma.
\]
The quotient converges pointwise to $g(d)$, and
\[
    |q_\epsilon(d)|\le\frac2e+\Psi(d)
\]
for $0<\epsilon\le1/2$.  This is an integrable dominating function for the
fixed plan $\gamma$, so dominated convergence gives
\[
    F'_\epsilon(\gamma)\longrightarrow J(\gamma)=F'(\gamma).
\]
At points where $F'=+\infty$, the Gamma-limsup requirement is automatic.
This proves the second Gamma limit.

Compactness of $\Pi(\mu,\nu)$ makes both families equicoercive.  Also
$\Pi_1(\mu,\nu)=\{F=m\}$ is nonempty and compact, and $J$ is continuous
and finite there.  Hence its minimum is attained and is a real number.

Finally, let $\gamma_\epsilon$ minimize $F_\epsilon$, equivalently
$F'_\epsilon$, and let a subsequence converge weakly to $\gamma_0$.  For
every $\eta\in\Pi_1(\mu,\nu)$, minimality and the constant recovery
sequence give
\[
    F'(\gamma_0)
    \le\liminf_{\epsilon\to0^+}F'_\epsilon(\gamma_\epsilon)
    \le\limsup_{\epsilon\to0^+}F'_\epsilon(\eta)
    =J(\eta).
\]
Thus $\gamma_0\in\Pi_1(\mu,\nu)$ and it minimizes $J$ on that class.
\end{proof}

\begin{lemma}[Orlicz lexicographic variational principle]
\label{lem:power-orlicz-lexicographic}
Let $\mu,\nu\in\mathcal M_{1+}(\mathbb R)$ have equal mass.  Let
$c_2:\mathbb R^2\to\mathbb R$ be lower semicontinuous and suppose
\[
    c_2(x,y)\ge
    -C\bigl(1+\Psi(|x|)+\Psi(|y|)\bigr)
\]
for some $C\ge0$.  Suppose that $\gamma$ minimizes
$\int c_2\,d\eta$ among $\eta\in\Pi_1(\mu,\nu)$ and that its value is
finite.  Then there are a 1-Lipschitz function $u$ and a full
$\gamma$-mass Borel set
\[
    S\subset\Gamma_u
    \coloneqq\{(x,y):|x-y|=u(x)-u(y)\}
\]
such that, for every finite family $(x_i,y_i)_{i=1}^N\subset S$ and every
permutation $\tau$,
\[
    \sum_i|x_i-y_i|
    \le\sum_i|x_i-y_{\tau(i)}|.
\]
Whenever equality holds, one also has
\[
    \sum_i c_2(x_i,y_i)
    \le\sum_i c_2(x_i,y_{\tau(i)}).
\]
\end{lemma}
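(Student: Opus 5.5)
The plan is to copy the proof of Lemma~\ref{lem:lexicographic-variational-main}, replacing the linear lower control $C(1+|x|+|y|)$ by the Orlicz control $C(1+\Psi(|x|)+\Psi(|y|))$. The one-dimensional Kantorovich step is unchanged. Since $\Psi(r)\ge r\log 2$ for $r\ge1$, the measures in $\mathcal M_{1+}(\mathbb R)$ have finite first moments. Lemma~\ref{lem:linear-cost-contact-set} then supplies a 1-Lipschitz $u$ with closed contact set $\Gamma_u$ and $\Pi_1(\mu,\nu)=\{\eta\in\Pi(\mu,\nu):\eta(\Gamma_u)=M\}$, where $M$ is the common mass. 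So $\gamma$ minimizes $\int c_2\,d\eta$ over this restricted class.

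Next define the modified cost
\[
\widehat c_2(x,y)\coloneqq c_2(x,y)+C\bigl(1+\Psi(|x|)+\Psi(|y|)\bigr)\ \text{on }\Gamma_u,\qquad \widehat c_2=+\infty\ \text{off }\Gamma_u .
\]
This cost has the following properties.
\begin{itemize}
\item It is nonnegative by hypothesis.
\item It is lower semicontinuous, because $c_2$ and $\Psi$ are lower semicontinuous (indeed $\Psi$ is continuous) and $\Gamma_u$ is closed.
\item The added term integrates to the same finite constant $C\bigl(M+\int\Psi\,d\mu+\int\Psi\,d\nu\bigr)$ under every coupling, since the marginals lie in $\mathcal M_{1+}$.
\end{itemize}
It follows that minimizing $c_2$ over $\Pi_1(\mu,\nu)$ is equivalent to minimizing $\widehat c_2$ over $\Pi(\mu,\nu)$. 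In particular $\gamma$ is a $\widehat c_2$-minimizer, and $\int\widehat c_2\,d\gamma<\infty$ because the secondary value is finite.

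For $M>0$, I would normalize to the probability plan $M^{-1}\gamma$. The necessity part of \cite[Theorem~3.2]{ambrosio2003existence}, which needs only a nonnegative lsc cost with finite optimal value, then gives a full-mass Borel $\widehat c_2$-cyclically monotone set; intersecting it with $\Gamma_u$ gives $S$. Take pairs in $S$ and a permutation whose rearranged pairs all lie in $\Gamma_u$. The $\Psi$-terms depend only on the source and target lists, so they cancel, and we obtain the secondary inequality for $c_2$.

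The primary inequality and the passage from equality to membership are proved exactly as before:
\[
\sum_i|x_i-y_i|=\sum_i\bigl(u(x_i)-u(y_{\tau(i)})\bigr)\le\sum_i|x_i-y_{\tau(i)}| .
\]
Equality forces every gap $|x_i-y_{\tau(i)}|-u(x_i)+u(y_{\tau(i)})$ to vanish, so each rearranged pair lies in $\Gamma_u$, and the restricted secondary inequality applies.

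The main point needing care is the cancellation of the added term. If any $\Psi(|x_i|)$ were infinite, the cancellation would fail, but $\Psi$ is finite-valued, so every term on $\Gamma_u$ is a real number and the cancellation is legitimate. The only other point to check is that nonnegativity of $\widehat c_2$ comes directly from the assumed bound, so no relation between $|x-y|$ and $|x|,|y|$ is needed, unlike the linear case. The case $M=0$ is vacuous.
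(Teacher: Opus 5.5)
Your proposal is correct and follows the paper's proof essentially step for step. It uses the same extended cost $\widehat c_2$, with the Orlicz shift on $\Gamma_u$ and $+\infty$ off the closed contact set, the same normalization with the necessity part of \cite[Theorem~3.2]{ambrosio2003existence}, the same cancellation of the separable shift, and the same Lipschitz-gap argument that forces the rearranged pairs into $\Gamma_u$ in the equality case. Your note that $\Psi(r)\ge r\log 2$ for $r\ge1$ supplies the finite first moments needed for Lemma~\ref{lem:linear-cost-contact-set}; the paper leaves this detail implicit.
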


\begin{proof}
Choose $u$ and $\Gamma_u$ from
Lemma~\ref{lem:linear-cost-contact-set}.  That lemma identifies
$\Pi_1(\mu,\nu)$ with the couplings concentrated on $\Gamma_u$.  Encode
this restriction and the lower control in the extended cost
\[
    \widehat c_2(x,y)\coloneqq
    \begin{cases}
      c_2(x,y)+C\bigl(1+\Psi(|x|)+\Psi(|y|)\bigr),
          &(x,y)\in\Gamma_u,\\
      +\infty,&(x,y)\notin\Gamma_u.
    \end{cases}
\]
The contact set is closed, so $\widehat c_2$ is lower semicontinuous; the
assumed lower bound on $c_2$ makes it nonnegative.  On couplings concentrated on
$\Gamma_u$, the added term has the fixed finite integral
\[
    C\left[
    \mu(\mathbb R)+\int\Psi(|x|)\,d\mu(x)
    +\int\Psi(|y|)\,d\nu(y)\right].
\]
Thus $\gamma$ minimizes $\widehat c_2$ over all of $\Pi(\mu,\nu)$.
Put $M\coloneqq\mu(\mathbb R)=\nu(\mathbb R)$.  If $M=0$, the conclusion is
vacant.  If $M>0$, then $M^{-1}\gamma$ is an optimal probability plan for the
same nonnegative lower semicontinuous cost $\widehat c_2$, and its cost is
finite by the assumed finite secondary value and the finite marginal
$\Psi$-moments.  
We make use of the standard result characterizing optimality of
couplings in terms of so-called $c$-cyclic monotonicity
\cite[Theorem~3.2]{ambrosio2003existence}.  The necessity part of that
theorem, applied to the optimal probability plan $M^{-1}\gamma$,
therefore provides a $\widehat c_2$-cyclically monotone Borel set
$S\subset\Gamma_u$ of full $M^{-1}\gamma$-mass, hence of full
$\gamma$-mass.

For points in $S$, the 1-Lipschitz property of $u$ gives
\begin{align*}
    \sum_i|x_i-y_i|
    &=\sum_i\bigl(u(x_i)-u(y_i)\bigr)\\
    &=\sum_i\bigl(u(x_i)-u(y_{\tau(i)})\bigr)
    \le\sum_i|x_i-y_{\tau(i)}|.
\end{align*}
If equality holds, every nonnegative Lipschitz gap in the last inequality
vanishes, so all rearranged pairs $(x_i,y_{\tau(i)})$ belong to
$\Gamma_u$.  Cyclical monotonicity of $\widehat c_2$ can then be applied
to this permutation.  Its separable shift cancels because the source
list is unchanged and the target list is merely permuted.  The remaining
inequality is exactly the asserted inequality for $c_2$.
\end{proof}

\begin{theorem}[Logarithmic variational property of the excursion coupling]
\label{thm:power-ec-one-dimensional}
Let $\mu,\nu\in\mathcal M_{1+}(\mathbb R)$ be mutually singular finite
positive Borel measures with equal positive mass.  Assume that $\mu$ is
atomless and that $\nu$ stochastically dominates $\mu$.  Let
$\gamma_{\mathrm{EC}}$ be Juillet's excursion coupling and set
\[
    \mathcal A\coloneqq
    \{\gamma\in\Pi(\mu,\nu):
      \gamma(\{(x,y):x\le y\})=\mu(\mathbb R)\}.
\]
Then $\mathcal A=\Pi_1(\mu,\nu)$, and
$\gamma_{\mathrm{EC}}$ is the unique solution of
\[
    \min_{\gamma\in\mathcal A}J(\gamma)
    =
    \min_{\gamma\in\mathcal A}
    \int_{\mathbb R^2}-|x-y|\log|x-y|\,d\gamma(x,y).
\]
\end{theorem}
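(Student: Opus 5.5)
The plan is to mirror the proof of Theorem~\ref{thm:5.2'}, replacing the linear lower control by the Orlicz control of this section. First, Lemma~\ref{lem:forward-equals-l1-ordered} applies because $\mathcal M_{1+}$ measures have finite first moments, so $\mathcal A=\Pi_1(\mu,\nu)$. Theorem~\ref{thm:EC-map-gives-a-plan-1D} gives $\gamma_{\mathrm{EC}}\in\mathcal A$. Existence of a minimizer then follows from the direct method. The class $\mathcal A$ is weakly compact, since it is a closed subset of the compact class $\Pi(\mu,\nu)$ by the portmanteau argument in the proof of Theorem~\ref{thm:5.2'}. Moreover, $J$ is finite and weakly continuous on $\Pi(\mu,\nu)$ by Lemma~\ref{lem:power-J-continuity}, applied in dimension one.

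Next, let $\gamma$ be any minimizer. We apply Lemma~\ref{lem:power-orlicz-lexicographic} with $c_2(x,y)=g(|x-y|)$. This cost is continuous, since $g$ is continuous with $0\log0=0$. For the lower bound we use $g(r)\ge -1/e$ on $[0,1]$ and $-g(r)=r\log r\le\Psi(r)$ for $r\ge1$, together with Lemma~\ref{lem:power-two-point-orlicz}. These give
\[
c_2(x,y)\ge -C\bigl(1+\Psi(|x|)+\Psi(|y|)\bigr).
\]
The secondary value is finite by Lemma~\ref{lem:power-J-continuity}. The lemma therefore yields a full-mass Borel set $S$ on which the primary inequality always holds, and on which the secondary $g$-swap inequality holds whenever the primary costs tie. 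Intersecting with the forward set and with $A\times A^c$ for a Borel separator $A$ of $\mu\perp\nu$, we may assume $x<y$ for every $(x,y)\in S$.

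The function $g$ is finite-valued and strictly concave on $[0,\infty[$, since $g''(r)=-1/r<0$ for $r>0$ and $g$ is continuous at $0$. Hence Lemma~\ref{lem:strict-concavity-arch-comparison} applies to $\psi=g$. With the same choices of $a,b,c$ as in the proof of Theorem~\ref{thm:5.2'}, it rules out crossing arches $x<x'<y<y'$: rerouting preserves the primary cost because all pairs are forward and the rerouted pairs remain forward, yet it strictly lowers the $g$-cost. It likewise rules out connected arches $x<y=x'<y'$, using the rerouting to $(x,y')$ and $(y,y)$. Nested arches have the same orientation automatically, because all pairs in $S$ are forward. So $\gamma$ is concentrated on a set satisfying Juillet's three monotone-arch conditions. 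After normalizing by the mass $M$, Juillet's Main Theorem gives $M^{-1}\gamma=M^{-1}\gamma_{\mathrm{EC}}$, hence $\gamma=\gamma_{\mathrm{EC}}$. Since a minimizer exists and every minimizer equals $\gamma_{\mathrm{EC}}$, the minimum is attained uniquely at $\gamma_{\mathrm{EC}}$.

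The main obstacle I expect is the lower-bound hypothesis of the lexicographic lemma. Lemma~\ref{lem:lexicographic-variational-main} is unavailable because $g$ is not bounded below by $-C(1+d)$, which is exactly why Lemma~\ref{lem:power-orlicz-lexicographic} exists. What needs care is that the shift $C(1+\Psi(|x|)+\Psi(|y|))$ has a finite integral that is constant over $\Pi(\mu,\nu)$; this holds because it is separable in $x$ and $y$ and the marginals have finite $\Psi$-moments. A second, minor check is that the rerouted pair $(y,y)$ in the connected-arch case stays in $\Gamma_u$, so that the secondary inequality can be invoked. The primary tie guarantees this, as shown in the proof of Lemma~\ref{lem:power-orlicz-lexicographic}.
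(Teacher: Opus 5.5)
Your proposal is correct and follows the paper's proof essentially step for step. Both arguments identify $\mathcal A=\Pi_1(\mu,\nu)$ via Lemma~\ref{lem:forward-equals-l1-ordered}, obtain a minimizer from compactness and Lemma~\ref{lem:power-J-continuity}, apply Lemma~\ref{lem:power-orlicz-lexicographic} with $c_2=g(|x-y|)$ under the Orlicz lower bound from Lemma~\ref{lem:power-two-point-orlicz}, exclude crossing and connected arches by Lemma~\ref{lem:strict-concavity-arch-comparison}, and conclude with Juillet's Main Theorem.

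One small aside: once you intersect $S$ with $A\times A^c$, a connection $y=x'$ is already impossible because $y\in A^c$ and $x'\in A$. Your concavity argument for connected arches is therefore redundant, though harmless.
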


\begin{proof}
The log moment implies a finite first moment.  Hence
Theorem~\ref{thm:EC-map-gives-a-plan-1D} and
Lemma~\ref{lem:forward-equals-l1-ordered} give
$\gamma_{\mathrm{EC}}\in\mathcal A$ and
$\mathcal A=\Pi_1(\mu,\nu)$.  The class $\mathcal A$ is weakly compact:
the fixed-marginal class is compact, and the forward condition is closed
by the portmanteau theorem.  Lemma~\ref{lem:power-J-continuity} therefore
shows that $J$ has a finite minimizer on $\mathcal A$.
The function $g$ is continuous and strictly concave on $[0,\infty[$;
continuity at $r=0$ follows from $\lim_{r\to0^+}r\log r=0$, and strict concavity on $]0,\infty[$ follows
from $g''(r)=-1/r$.
Moreover,
\[
    g(\|x-y\|)\ge-\Psi(\|x-y\|)
    \ge-C_\Psi\bigl(1+\Psi(|x|)+\Psi(|y|)\bigr)
\]
by Lemma~\ref{lem:power-two-point-orlicz}.  Apply
Lemma~\ref{lem:power-orlicz-lexicographic} to any minimizer $\gamma$, with
$c_2(x,y)=g(|x-y|)$, and let $S$ be the resulting full-mass set.  Mutual
singularity lets us remove the diagonal from $S$, and the forward
condition then gives $x<y$ for every $(x,y)\in S$.

The set $S$ has the same primary-tie and secondary-domination property
that Lemma~\ref{lem:secondary-swap-general-profile} gives in the proof
of Theorem~\ref{thm:5.2'}.  So the same argument applies: crossing
arches and connected arches are excluded by
Lemma~\ref{lem:strict-concavity-arch-comparison}, and nested arches
automatically share an orientation because every pair in $S$ is
forward.  Hence $\gamma$ is concentrated on a full-mass set satisfying
Juillet's monotone-arch conditions, and $\gamma=\gamma_{\mathrm{EC}}$ by
Juillet's Main Theorem~\cite{juillet2020solution}.
Since $\gamma$ was an arbitrary minimizer, the minimizer is unique.

\end{proof}

\begin{theorem}[Logarithmic comparison along transport rays]
\label{thm:power-ray-construction}
Assume that $\mu,\nu,\gamma$, and $\Gamma$ satisfy the hypotheses of
Theorem~\ref{thm:6.1'}, and assume in addition that
$\mu,\nu\in\mathcal M_{1+}(\mathbb R^n)$.
Let $\gamma_\#^\Gamma$ be the plan constructed
in Theorem~\ref{thm:6.1'} for this fixed support $\Gamma$. Then
\[
    J(\gamma_\#^\Gamma)\le J(\gamma)\, ,
\]
and the inequality is strict unless $\gamma=\gamma_\#^\Gamma$.
\end{theorem}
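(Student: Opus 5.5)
The plan is to repeat the proof of conclusion (c) of Theorem~\ref{thm:6.1'}, replacing the one-dimensional comparison from Theorem~\ref{thm:5.2'} by its logarithmic analogue, Theorem~\ref{thm:power-ec-one-dimensional}. All raywise objects come from Lemma~\ref{lem:raywise-inheritance}, applied to $\gamma$ and $\Gamma$: the ray measure $\sigma=r_\#\gamma$, the conditional plans $\gamma_C$ and the conditional marginals $\mu_C,\nu_C$. By (v) of that lemma, the extra hypothesis $\mu,\nu\in\mathcal M_{1+}(\mathbb R^n)$ gives finite conditional $\Psi$-moments for $\sigma$-a.e.\ $C$. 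In the oriented coordinate $s_C$ we have $|s_C(z)|\le\|z\|+\|z_C\|$, and Lemma~\ref{lem:power-two-point-orlicz}-type scaling ($\Psi(a+b)\le C(1+\Psi(a)+\Psi(b))$) then shows that $(s_C)_\#\mu_C,(s_C)_\#\nu_C\in\mathcal M_{1+}(\mathbb R)$. Together with (i)--(iv), for $\sigma$-a.e.\ $C$ the hypotheses of Theorem~\ref{thm:power-ec-one-dimensional} hold on the ray.

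\textbf{Raywise inequality.} Both $\gamma_C$ and the excursion coupling $\gamma_{C,\#}$ lie in the forward class $\mathcal A_C$: the first by (iv), the second by Theorem~\ref{thm:EC-map-gives-a-plan-1D}. Theorem~\ref{thm:power-ec-one-dimensional} therefore yields
\[
J(\gamma_{C,\#})\le J(\gamma_C),
\]
with equality only if $\gamma_C=\gamma_{C,\#}$, because the minimizer is unique. Here $J$ is computed with $|s_C(x)-s_C(y)|=\|x-y\|$. By Step~4 of the proof of Theorem~\ref{thm:6.1'} and \eqref{eq:glued-ec-disintegration}, the plan $\gamma_\#^\Gamma$ disintegrates as $\int\gamma_{C,\#}\,d\sigma(C)$ along the same $\sigma$.

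\textbf{Integration step (the main obstacle).} We must integrate the raywise inequality over $\sigma$. The difficulty is that $g$ is unbounded below, so the affine-control argument used for (c) is no longer available. Lemma~\ref{lem:power-fixed-marginal-ui} fills this gap. Since $|g(\|x-y\|)|\le 1/e+\Psi(\|x-y\|)\le C(1+\Psi(\|x\|)+\Psi(\|y\|))$ and both plans have the fixed marginals $\mu,\nu$, the function $|g|$ is integrable against $\gamma$ and against $\gamma_\#^\Gamma$. By Tonelli, the raywise integrals $\int|g|\,d\gamma_C$ and $\int|g|\,d\gamma_{C,\#}$ are then finite for $\sigma$-a.e.\ $C$. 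It remains to check that
\[
C\mapsto \int g\,d\gamma_C,\qquad C\mapsto\int g\,d\gamma_{C,\#}
\]
are Borel. This follows from Borel dependence of the kernels, given by Lemma~\ref{lem:measurable-raywise-ec} and the disintegration theorem, via truncation $g_M=\max\{g,-M\}$ and monotone convergence. Fubini then gives
\[
J(\gamma_\#^\Gamma)=\int J(\gamma_{C,\#})\,d\sigma\le\int J(\gamma_C)\,d\sigma=J(\gamma).
\]

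\textbf{Strictness.} Suppose equality holds globally. Then the nonnegative Borel gap $D(C)=J(\gamma_C)-J(\gamma_{C,\#})$ satisfies $\int D\,d\sigma=0$, so $D=0$ for $\sigma$-a.e.\ $C$. Uniqueness in Theorem~\ref{thm:power-ec-one-dimensional} then gives $\gamma_C=\gamma_{C,\#}$ for $\sigma$-a.e.\ $C$. Comparing \eqref{eq:ray-disintegration} with \eqref{eq:glued-ec-disintegration} yields $\gamma=\gamma_\#^\Gamma$, exactly as at the end of the proof of Theorem~\ref{thm:6.1'}.
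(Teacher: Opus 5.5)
Your proposal is correct and follows the paper's proof essentially step for step. It uses the same ingredients: inheritance of conditional $\Psi$-moments from Lemma~\ref{lem:raywise-inheritance}(v) together with the coordinate estimate; the raywise comparison and uniqueness from Theorem~\ref{thm:power-ec-one-dimensional}; integrability of $|g|$ from Lemma~\ref{lem:power-fixed-marginal-ui} to justify disintegrating $J$; and the vanishing-gap argument for strictness. Your truncation argument for the Borel measurability of $C\mapsto\int g\,d\gamma_C$ spells out a detail that the paper only asserts.
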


\begin{proof}
Use the ray disintegration from the proof of Theorem~\ref{thm:6.1'}:
\[
    \gamma=\int\gamma_C\,d\sigma(C).
\]
The log-moment part of Lemma~\ref{lem:raywise-inheritance} shows that the
conditional marginals $\mu_C,\nu_C$ have finite
$\Psi(\|z\|)$ moments for $\sigma$-almost every ray, in addition to
inheriting mutual singularity, stochastic order, and atomlessness of the
conditional source.  If $s_C$ is an oriented coordinate on a fixed ray,
then $|s_C(z)|= \|z-z_C\| \le \|z\|+\|z_C\|$ for a fixed $z_C\in C$; the scaling estimate
from Lemma~\ref{lem:power-two-point-orlicz} therefore gives finite
$\Psi(|s_C|)$ moments as required by the one-dimensional theorem.
The measurable-gluing step in the proof of
Theorem~\ref{thm:6.1'} gives a measurable family of raywise EC plans
$\gamma_{C,\#}$ and
\[
    \gamma_\#^\Gamma=\int\gamma_{C,\#}\,d\sigma(C)\, .
\]
Theorem~\ref{thm:power-ec-one-dimensional}, after identifying each
oriented ray with an interval of $\mathbb R$, gives
\[
    \int g(\|x-y\|)\,d\gamma_{C,\#}
    \le
    \int g(\|x-y\|)\,d\gamma_C
\]
for $\sigma$-almost every $C$, with equality exactly when
$\gamma_C=\gamma_{C,\#}$.

Lemma~\ref{lem:power-fixed-marginal-ui} gives absolute integrability of
$g(\|x-y\|)$ for both global couplings $\gamma$ and
$\gamma_\#^\Gamma$, since they have the same fixed marginals.  It follows that we can disintegrate $J(\cdot)$ for both couplings as per
\begin{align*}
    J(\gamma)
    &=\int\left(\int g(\|x-y\|)\,d\gamma_C\right)d\sigma(C)\, ,\\
    J(\gamma_\#^\Gamma)
    &=\int\left(\int g(\|x-y\|)\,d\gamma_{C,\#}\right)d\sigma(C)\, .
\end{align*}
Integrating the raywise inequalities proves the comparison.  If equality
holds globally, the nonnegative, measurable raywise gaps have integral
zero.  They therefore vanish for $\sigma$-almost every ray, and raywise
uniqueness gives $\gamma_C=\gamma_{C,\#}$ almost everywhere.  Measurable
gluing then gives $\gamma=\gamma_\#^\Gamma$. 
\end{proof}

\begin{theorem}[Logarithmic secondary uniqueness]
\label{thm:power-secondary-uniqueness}
Let $\mu,\nu\in\mathcal M_{1+}(\mathbb R^n)$ be mutually singular finite
positive Borel measures with equal positive mass, and assume
$\mu\ll\mathcal L^n$.  Then
\[
    \min_{\gamma\in\Pi_1(\mu,\nu)}J(\gamma)
    =
    \min_{\gamma\in\Pi_1(\mu,\nu)}
    \int-\|x-y\|\log\|x-y\|\,d\gamma(x,y)
\]
has a finite value and the unique minimizer $\gamma_\#$ from
Theorem~\ref{thm:7.2'}. This minimizer is induced by the intrinsic generalized
EC map $t_\#$; in particular, it is the same plan selected by every admissible
strictly concave secondary profile.
\end{theorem}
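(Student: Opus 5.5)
The plan mirrors the proof of Theorem~\ref{thm:7.2'}, with the logarithmic comparison Theorem~\ref{thm:power-ray-construction} in place of conclusion~(c) of Theorem~\ref{thm:6.1'}.

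\textbf{Existence and finiteness.} Since $\Psi(r)\ge r\log 2$ for $r\ge1$, the log moment implies a finite first moment. Hence Lemma~\ref{lem:linear-cost-contact-set} applies, and $\Pi_1(\mu,\nu)$ is a nonempty weakly closed subset of the compact set $\Pi(\mu,\nu)$. Lemma~\ref{lem:power-J-continuity} shows that $J$ is finite and weakly continuous there. So a minimizer exists and the minimum value is finite; this is also recorded at the end of Proposition~\ref{prop:power-gamma-selection}.

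\textbf{Uniqueness.} Fix the potential $u$ from Lemma~\ref{lem:linear-cost-contact-set} and the $\sigma$-compact diagonal-free set $\Gamma^*\coloneqq\Gamma_u\setminus\Delta$, exactly as in the proof of Theorem~\ref{thm:7.2'}. By Theorem~\ref{thm:6.2'}, $\Gamma^*$ satisfies the no-crossing condition and hypotheses (i)--(iii) of Theorem~\ref{thm:6.1'}. Let $\gamma$ be any minimizer of $J$ on $\Pi_1(\mu,\nu)$.
\begin{enumerate}
\item Since $\gamma\in\Pi_1(\mu,\nu)$, Lemma~\ref{lem:linear-cost-contact-set} gives that $\gamma$ is concentrated on $\Gamma_u$. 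Mutual singularity gives $\gamma(\Delta)=0$, so $\gamma$ is concentrated on $\Gamma^*$.
\item Theorem~\ref{thm:6.1'} produces the canonical plan $\gamma_\#^{\Gamma^*}$. By conclusion~(a) and the cyclic monotonicity of $\Gamma_u$, this plan lies in $\Pi_1(\mu,\nu)$.
\item Theorem~\ref{thm:power-ray-construction} gives $J(\gamma_\#^{\Gamma^*})\le J(\gamma)$, with strict inequality unless $\gamma=\gamma_\#^{\Gamma^*}$.
\item Minimality of $\gamma$ forces equality, so $\gamma=\gamma_\#^{\Gamma^*}$.
\end{enumerate}
Since $\gamma$ was an arbitrary minimizer, the minimizer is unique and equals $\gamma_\#^{\Gamma^*}$.

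\textbf{Identification with $\gamma_\#$.} The theorem's hypotheses include finite first moments and $\mu\ll\mathcal L^n$, so Theorem~\ref{thm:7.2'} applies, for instance with $\psi(d)=\sqrt d$. Its proof shows that the unique (SP) minimizer is the same canonical plan $\gamma_\#^{\Gamma^*}$ for this fixed $\Gamma^*$. That proof also shows this plan is independent of the choice of potential and of the profile $\psi$. Hence the logarithmic minimizer coincides with $\gamma_\#$, which is induced by $t_\#$ and selected by every admissible strictly concave profile.

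\textbf{Main obstacle.} The delicate step is confirming that Theorem~\ref{thm:power-ray-construction} is applicable to the fixed $\Gamma^*$ and to an arbitrary minimizer $\gamma$. This needs the conditional $\Psi$-moments from Lemma~\ref{lem:raywise-inheritance}(v), which hold because $\mu,\nu\in\mathcal M_{1+}$. It also needs $\gamma_\#^{\Gamma^*}$ to be the same canonical plan whether it is built from $\gamma$ or from the (SP) minimizer. That is guaranteed by the ``Moreover'' clause of Theorem~\ref{thm:6.1'}, which says the canonical plan depends only on $\mu,\nu,\Gamma$.
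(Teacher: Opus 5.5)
Your proposal is correct and follows the paper's proof essentially step for step. Existence comes from weak compactness of $\Pi_1(\mu,\nu)$ and continuity of $J$ (Lemma~\ref{lem:power-J-continuity}). Uniqueness comes from concentrating any minimizer on the fixed set $\Gamma^*=\Gamma_u\setminus\Delta$ and combining conclusion~(a) of Theorem~\ref{thm:6.1'} with the strict logarithmic comparison of Theorem~\ref{thm:power-ray-construction}. The identification with $\gamma_\#$ holds because the proof of Theorem~\ref{thm:7.2'} defines $\gamma_\#$ as this same canonical plan $\gamma_\#^{\Gamma^*}$.
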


\begin{proof}
The set $\Pi_1(\mu,\nu)$ is nonempty and weakly compact, and
Lemma~\ref{lem:power-J-continuity} shows that $J$ is finite and continuous
there.  Thus a minimizer exists and the minimum is finite.

Fix the 1-Lipschitz Kantorovich potential $u$ from
Lemma~\ref{lem:linear-cost-contact-set}, and set
\[
    \Delta\coloneqq\{(x,x):x\in\mathbb R^n\},\qquad
    \Gamma^*\coloneqq \Gamma_u\setminus\Delta.
\]
The set $\Gamma^*$ is $\sigma$-compact: it is the union, over
$j,k\ge1$, of the compact sets
\[
    \Gamma_u\cap
    \{(x,y):\|(x,y)\|\le j\}\cap
    \{(x,y):\|x-y\|\ge1/k\}.
\]
Theorem~\ref{thm:6.2'} verifies the geometric hypotheses of
Theorem~\ref{thm:6.1'} for this diagonal-free subset of $\Gamma_u$.

Every plan in $\Pi_1(\mu,\nu)$ is concentrated on $\Gamma_u$.  It also
gives no mass to $\Delta$: if $A$ is a Borel separator with
$\mu(A^c)=0$ and $\nu(A)=0$, then
\[
    \gamma(\Delta)
    \le\gamma(A^c\times\mathbb R^n)
      +\gamma(\mathbb R^n\times A)
    =0.
\]
Hence every secondary minimizer is concentrated on the same set
$\Gamma^*$.

Let $\gamma_\#^{\Gamma^*}$ be the plan supplied by
Theorem~\ref{thm:6.1'} for this fixed support, and let $\gamma$ be any
minimizer of $J$. Since $\Gamma^*$ is contained in the cyclically monotone
contact set $\Gamma_u$, conclusion (a) of Theorem~\ref{thm:6.1'} gives
$\gamma_\#^{\Gamma^*}\in\Pi_1(\mu,\nu)$. The logarithmic comparison gives
\[
    J(\gamma_\#^{\Gamma^*})\le J(\gamma).
\]
Minimality supplies the reverse inequality, so equality holds. The strict
equality statement in Theorem~\ref{thm:power-ray-construction} forces
$\gamma=\gamma_\#^{\Gamma^*}$. Thus the logarithmic minimizer is unique,
induced by a map, and agrees raywise with the one-dimensional excursion
coupling. The proof of Theorem~\ref{thm:7.2'} defines its intrinsic plan
$\gamma_\#$ using this same fixed-support construction
$\gamma_\#^{\Gamma^*}$; hence the logarithmic minimizer is precisely
$\gamma_\#$ and it is induced by the same map $t_\#$.
\end{proof}

\begin{theorem}[Convergence of power-cost optimizers to the generalized EC map.]
\label{thm:main-theorem-power-costs}
Let $\mu,\nu\in\mathcal M_{1+}(\mathbb R^n)$ be mutually singular finite
positive Borel measures with equal positive mass, and assume
$\mu\ll\mathcal L^n$.  For every $\epsilon\in]0,1[$, the problem with cost
\[
    c_\epsilon(x,y)=\|x-y\|^{1-\epsilon}
\]
has a unique optimal plan $\gamma_\epsilon$, induced by a transport map
$t_\epsilon$.  As $\epsilon\to0^+$,
\[
    \gamma_\epsilon\longrightarrow\gamma_\#
    \quad\text{weakly},
    \qquad
    t_\epsilon\longrightarrow t_\#
    \quad\text{in $\mu$-measure},
\]
where
$\gamma_\#=(\mathrm{Id},t_\#)_\#\mu$
is the unique logarithmic secondary minimizer from
Theorem~\ref{thm:power-secondary-uniqueness}.
\end{theorem}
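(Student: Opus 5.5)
The proof will run parallel to that of Theorem~\ref{thm:main-theorem}, with Proposition~\ref{prop:power-gamma-selection} taking the place of Proposition~\ref{prop:7.1'} and Theorem~\ref{thm:power-secondary-uniqueness} taking the place of Theorem~\ref{thm:7.2'}.

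First, existence and uniqueness for fixed $\epsilon$. The profile $\phi_\epsilon(d)=d^{1-\epsilon}$ is finite-valued, nonnegative, increasing and strictly concave on $[0,\infty[$. Since $\mu\ll\mathcal L^n$, every $(n-1)$-rectifiable set is $\mu$-negligible. After normalizing by the common mass $M$, \cite[Theorem~3.3]{pegon2013concave-general} therefore gives a unique optimal probability plan induced by a map $t_\epsilon$, and rescaling by $M$ returns the unique optimizer $\gamma_\epsilon=(\mathrm{Id},t_\epsilon)_\#\mu$ in $\Pi(\mu,\nu)$. Mutual singularity means there is no diagonal part. The transport cost is finite, because the $\Psi$-moment bound implies finite first moments and $d^{1-\epsilon}\le 1+d$.

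Second, convergence of the plans. $\Pi(\mu,\nu)$ is weakly compact, and the narrow topology on measures of fixed mass is metrizable, so it suffices to show that every subsequential limit $\gamma_0$ of $\gamma_{\epsilon_j}$ equals $\gamma_\#$. Proposition~\ref{prop:power-gamma-selection} gives $\gamma_0\in\Pi_1(\mu,\nu)$ and that $\gamma_0$ minimizes $J$ there. Theorem~\ref{thm:power-secondary-uniqueness} says this minimizer is unique and equals $\gamma_\#=(\mathrm{Id},t_\#)_\#\mu$. The standard subsequence-contradiction argument then gives $\gamma_\epsilon\to\gamma_\#$ weakly along the whole family $\epsilon\to0^+$.

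Third, from graphs to maps. I reuse the Lusin/portmanteau argument verbatim. Fix $\eta,\delta>0$ and pick a compact $K$ with $\mu(K^c)<\delta$ and $t_\#|_K$ continuous. Then the set $A_{\eta,K}=\{x\in K,\ \|y-t_\#(x)\|\ge\eta\}$ is closed and $\gamma_\#$-null, so $\limsup_\epsilon\gamma_\epsilon(A_{\eta,K})=0$. This yields
\[
\limsup_{\epsilon\to0^+}\mu\{\|t_\epsilon-t_\#\|\ge\eta\}\le\delta ,
\]
and letting $\delta\to0^+$ gives convergence in $\mu$-measure.

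The only point needing care is that the power costs fall outside Assumption~\ref{ass:concave-perturbation-family}: the profile $g(r)=-r\log r$ is unbounded below by any affine function. That gap is already absorbed by the Orlicz uniform integrability (Lemma~\ref{lem:power-fixed-marginal-ui}) built into Proposition~\ref{prop:power-gamma-selection} and Theorem~\ref{thm:power-secondary-uniqueness}, so with those results in hand I expect no genuine obstacle. The main check is that the Pegon et al.\ hypotheses do not require the growth bound $\phi_\epsilon(d)\le2(1+d)$; that bound holds here anyway.
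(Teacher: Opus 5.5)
Your proposal is correct and follows essentially the same route as the paper. It uses the Pegon et al.\ theorem after normalizing the mass for existence, uniqueness and the map representation; then compactness and metrizability combined with Proposition~\ref{prop:power-gamma-selection} and Theorem~\ref{thm:power-secondary-uniqueness} to get weak convergence of the whole family; and finally the same Lusin--portmanteau argument to pass to convergence of the maps in $\mu$-measure.
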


\begin{proof}
For each $\epsilon\in]0,1[$, the profile
$r\mapsto r^{1-\epsilon}$ is nonnegative, increasing, and strictly
concave.  The concave-cost transport theorem of Gangbo--McCann, in the
form of \cite[Theorem~3.3]{pegon2013concave-general}, applied after
normalizing the common mass, gives the unique optimal plan and its map
representation.

The class $\Pi(\mu,\nu)$ is weakly compact.  Hence every sequence
$\epsilon_k\to0^+$ has a subsequence for which
$\gamma_{\epsilon_k}$ converges weakly.  By
Proposition~\ref{prop:power-gamma-selection}, every such limit minimizes
$J$ on $\Pi_1(\mu,\nu)$.  Theorem~\ref{thm:power-secondary-uniqueness}
identifies its unique minimizer as
$\gamma_\#$.  Thus every subsequential limit is
$\gamma_\#$.  The narrow topology on finite
measures of fixed mass over the Polish product space is metrizable.  If the
full family failed to converge, a sequence $\epsilon_k\to0^+$ would
remain outside a neighborhood of
$\gamma_\#$; compactness would produce a convergent
subsequence, contradicting the preceding identification.  Hence the full
family converges weakly to $\gamma_\#$.

It remains to pass from plans to maps.  Fix $\rho>0$.  By Lusin's theorem,
for every $\delta>0$ there is a compact set $K$ such that
$\mu(K^c)<\delta$ and $t_\#|_K$ is continuous.  The set
\[
    B_{\rho,K}\coloneqq 
    \{(x,y):x\in K,\
      \|y-t_\#(x)\|\ge\rho\}
\]
is closed and has $\gamma_\#$-mass zero.  The
portmanteau theorem gives
\[
    \limsup_{\epsilon\to0^+}\gamma_\epsilon(B_{\rho,K})=0\, .
\]
Since $\gamma_\epsilon=(\mathrm{Id},t_\epsilon)_\#\mu$,
\[
    \mu\{x:
      \|t_\epsilon(x)-t_\#(x)\|\ge\rho\}
    \le\mu(K^c)+\gamma_\epsilon(B_{\rho,K})\, .
\]
Taking the upper limit and then letting $\delta\to0$ proves convergence
in $\mu$-measure.
\end{proof}

\bibliographystyle{plainnat}
\bibliography{ref}

\end{document}